\mathchardef\tnode="020E 
\def\arc{
  \hbox{\kern -0.15em
  \vbox{\hrule width 2.5em height 0.6ex depth -0.5 ex}
  \kern -0.33em}}
\def\darc{
  \rlap{\lower0.2ex\arc}{\raise0.2ex\arc}}
\def\stroke#1{
  \kern 0.05em
  \rlap\arc{{\textstyle{#1}}\atop\phantom\arc}
  \kern -0.22em}
\def\dstroke#1{
  \kern 0.05em
  \rlap\darc{{\textstyle{#1}}\atop\phantom\darc}
  \kern -0.22em}
\def\centerscript#1{
  \setbox0=\hbox{$\tnode$}
  \hbox to \wd0{\hss$\scriptstyle{#1}$\hss}}
\def\node{
  \def\super{}
  \def\sub{}
  \futurelet\next\dolabellednode}
  \let\sp=^
  \let\sb=_
  \def\dolabellednode{%
    \ifx\next\sb\let\next\getsub
    \else
      \ifx\next\sp\let\next\getsuper
      \else\let\next\donode
      \fi
    \fi
    \next}
  \def\getsub_#1{\def\sub{#1}\futurelet\next\dolabellednode}
  \def\getsuper^#1{\def\super{#1}\futurelet\next\dolabellednode}
  \def\donode{%
   \rlap{$\mathop{\phantom\tnode}\limits_{\centerscript{\sub}}^{\centerscript{\super}}$}\tnode}
\def\varcdn{
  \kern -0.03em\vbox{\kern -0.5ex
  \hbox to \wd0{\hss\vrule width 0.04em depth 5.8ex\hss}
  \kern -0.3ex  \hbox{$\tnode$}}}
\newcommand{\Theorem}{Theorem}
\newcommand{\Proposition}{Proposition}
\newcommand{\Lemma}{Lemma}
\newcommand{\Corollary}{Corollary}
\newcommand{\Definition}{Definition}
\newcommand{\Remark}{Remark}
\newcommand{\Example}{Example}
\newcommand{\Fact}{Fact}
{\theoremstyle{break}
\newtheorem{theorem}{\Theorem}[section]
\newtheorem{proposition}[theorem]{\Proposition}

\newtheorem{corollary}[theorem]{\Corollary}
\newtheorem{conjecture}[theorem]{Conjecture}
}
{\theorembodyfont{\rmfamily}
\newtheorem{definition}[theorem]{\Definition}
\newtheorem{remark}[theorem]{\Remark}
\newtheorem{example}[theorem]{\Example}
\newtheorem{observation}[theorem]{Observation}
\newtheorem{notation}[theorem]{Notation}
}
\newenvironment{proof}{\noindent {\sl Proof. }}{\hfill $\Box$ \smallskip}
\newenvironment{Proof}[1]{\noindent {\sl Proof #1. }}{\hfill $\Box$ \smallskip}
\newenvironment{introduction}{\begin{quote}
\small\em}{\end{quote}}
\newcommand{\newparagraph}{\noindent \refstepcounter{theorem}{\bf \thetheorem} }
\begin{document}

\title{{\bf On the geometry of global function fields, \\ the Riemann--Roch
theorem, \\ and \\ finiteness properties of $S$-arithmetic groups}}
\author{Ralf Gramlich}

\maketitle

\section{Introduction}

Harder's reduction theory (\cite{Harder:1968}, \cite{Harder:1969}) provides filtrations of Euclidean buildings that allow one to deduce cohomological (\cite{Harder:1977}) and homological (\cite{Stuhler:1980}, \cite{Bux/Wortman}) properties of $S$-arithmetic groups over global function fields.
In this survey I will sketch the main points of Harder's reduction theory, starting from Weil's geometry of numbers and the Riemann--Roch theorem.
I will describe a filtration, used for example in \cite{Behr:1998}, that is 
particularly useful for deriving finiteness properties of $S$-arithmetic
groups. Finally, I will state the recently established rank theorem and some its earlier partial verifications that do 
not restrict the cardinality of the underlying field of constants. As a 
motivation for further research I also state a much more general conjecture 
on isoperimetric properties of $S$-arithmetic groups over global fields 
(number fields or function fields).

\medskip
\noindent
{\bf Acknowledgements.} The author expresses his gratitude to Kai-Uwe Bux, 
Bernhard M\"uhlherr and Stefan Witzel for numerous invaluable discussions on 
the topics of this survey during joint research activities at the Hausdorff 
Institute of Mathematics at Bonn, the MFO at Oberwolfach, and the University 
of Bielefeld. The author also thanks Kai-Uwe Bux, Max Horn, Timoth\'ee
Marquis, Andreas Mars,
Susanne Schimpf, 
Rebecca Waldecker, Markus-Ludwig Wermer, Stefan Witzel and the participants of the reduction theory seminar at Bielefeld during summer 2010, especially Werner Hoffmann and Andrei Rapinchuk, for numerous comments, remarks and suggestions on how to
improve the contents and exposition of this survey. Furthermore, the author thanks two anonymous referees for valuable comments and observations.

\section{Projective varieties}

\begin{introduction}
In this section I give a quick introduction to the concept of projective
varieties. For further reading the sources \cite[I]{Hartshorne:1977} and
\cite[2]{Niederreiter/Xing:2009} are highly recommended.
\end{introduction}

\begin{definition} \label{begin}
Let $k$ be a perfect field, let $\overline{k}$ be its algebraic closure, and let $S \subset \overline{k}[X] = 
\overline{k}[x_0,x_1,...,x_n]$ be a set of {homogeneous} polynomials. The 
set 
$$Z(S) = \{ P \in \mathbb{P}_n(\overline{k}) \mid f(P) = 0 
\mbox{ for all $f \in S$} \}$$ is called a {\bf projective algebraic set}. The algebraic set $Z(S)$ is {\bf defined over $k$}, if $S$ can be chosen to be contained in $k[X]$.
The {\bf Zariski topology} on $\mathbb{P}_n(\overline{k})$ is defined by taking the closed sets to be the projective algebraic sets.

A non-empty projective algebraic set in $\mathbb{P}_n(\overline{k})$ is 
called a {\bf projective variety}, if it is irreducible in the Zariski 
topology of $\mathbb{P}_n(\overline{k})$, i.e., if it is not equal to the 
union of two proper closed subsets. It is called a {\bf projective
$k$-variety} if it is defined over $k$.
The {\bf dimension} of a non-empty projective variety is defined to be its dimension as a topological space in the induced Zariski
topology, i.e., the supremum of all integers $n$ such that there exists a
chain $Z_0 \subsetneq Z_1 \subsetneq \cdots \subsetneq Z_n$ of distinct non-empty
irreducible closed subsets.
\end{definition}

\begin{theorem}[{\cite[2.3.8]{Niederreiter/Xing:2009}}]
A projective algebraic set $V$ is a projective variety if and only if the ideal $I(V)$ of $\overline{k}[X]$ generated by the set $\{ f \in \overline{k}[X] \mid \mbox{ $f$ homogeneous and $f(P)=0$ for all $P \in V$} \}$ is a prime ideal of $\overline{k}[X]$.
\end{theorem}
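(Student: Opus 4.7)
The plan is to prove both implications by translating between ``$V$ decomposes into proper closed subsets'' and ``$I(V)$ fails to be prime'', using the bijective correspondence between homogeneous radical ideals of $\overline{k}[X]$ and projective algebraic sets (or, more elementarily, just the inclusion-reversing maps $Z$ and $I$ and the fact that $Z(I(V))=V$ for algebraic $V$). Before either direction I would record the key auxiliary observation that $I(V)$ is a \emph{homogeneous} ideal, since it is by definition generated by homogeneous polynomials; consequently, $I(V)$ is prime if and only if, whenever $f,g \in \overline{k}[X]$ are \emph{homogeneous} and $fg \in I(V)$, one has $f \in I(V)$ or $g \in I(V)$. This reduction to homogeneous test elements is the step I expect to require the most care, but it is standard material on homogeneous ideals in a polynomial ring and I would simply cite or quickly verify it.

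For the forward direction, assume $V$ is a projective variety, and let $f,g$ be homogeneous with $fg \in I(V)$. Then $V \subseteq Z(\{fg\}) = Z(\{f\}) \cup Z(\{g\})$, so
\[
V = (V \cap Z(\{f\})) \cup (V \cap Z(\{g\})),
\]
a union of two Zariski-closed subsets of $V$. Irreducibility forces one of them to coincide with $V$, say $V \subseteq Z(\{f\})$, which means $f \in I(V)$. By the homogeneous criterion above, $I(V)$ is prime.

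For the converse, assume $I(V)$ is prime and suppose towards a contradiction that $V = V_1 \cup V_2$ with $V_1, V_2 \subsetneq V$ proper closed subsets. Since $Z(I(W)) = W$ for any projective algebraic set $W$, the strict inclusions $V_i \subsetneq V$ force the strict inclusions $I(V) \subsetneq I(V_i)$ for $i = 1,2$. The ideals $I(V_i)$ are again homogeneous, so I can choose \emph{homogeneous} elements $f_i \in I(V_i) \setminus I(V)$. Their product $f_1 f_2$ vanishes on $V_1 \cup V_2 = V$ and is homogeneous, hence lies in $I(V)$. By primeness of $I(V)$ applied to the homogeneous elements $f_1, f_2$, one of them must already lie in $I(V)$, a contradiction.

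The only real obstacle is the homogeneous reduction mentioned in the first paragraph; once that is in hand the argument is a clean symmetric application of the Galois-type correspondence between $Z$ and $I$, so I would keep the write-up short and reference \cite{Hartshorne:1977} for the homogeneous-ideal fact rather than reproving it.
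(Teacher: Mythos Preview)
Your proof is correct and is the standard textbook argument (essentially what one finds in \cite[I.1]{Hartshorne:1977} adapted to the projective setting). Note, however, that the paper does not supply its own proof of this statement: it is quoted as a background fact with a citation to \cite[2.3.8]{Niederreiter/Xing:2009}, so there is nothing in the paper to compare your argument against. Your write-up would serve perfectly well as a self-contained justification, and your identification of the homogeneous-test-element reduction as the only nontrivial ingredient is accurate.
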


\begin{definition}
A non-empty intersection of a projective variety in $\mathbb{P}_n(\overline{k})$ with an open subset of $\mathbb{P}_n(\overline{k})$ is called a {\bf quasi-projective variety}.
\end{definition}

Let $V \subseteq \mathbb{P}_n(\overline{k})$ be a quasi-projective variety. For each $P \in V$ there exists a hyperplane $H \subset \mathbb{P}_n(\overline{k})$ with $P \not\in H$. Then $P \in V \backslash H = V \cap A_n(\overline{k})$ for 
$A_n(\overline{k}) := \mathbb{P}_n(\overline{k}) \backslash H$. Let $r$ be a defining relation of the hyperplane $H$ in the variables $x_0, ..., x_n$ (cf.\ \ref{begin}).
A $\overline{k}$-valued function $f$ on $V$ is called {\bf regular at $P$}, if there exists a neighbourhood $N$ of $P$ in $V \cap A_n(\overline{k})$ such that there exist polynomials $a, b \in \overline{k}[X]/(r)$ with $b(Q) \neq 0$ for all $Q \in N$ and $f_{|N} = \frac{a_{|N}}{b_{|N}}$. The function $f$ is {\bf regular} on a non-empty open subset $U$ of $V$, if it is regular at every point of $U$.

\begin{definition}\label{localring}
Let $V$ be a quasi-projective variety and let $P \in V$. Then the 
{\bf local ring $\mathcal{O}_P = \mathcal{O}_P(V)$ at $P$} is defined as 
the ring of germs $[f]$ of functions $f : V \to \overline{k}$ which are regular on a neighbourhood of $P$. In other words, an element of $\mathcal{O}_P$ is an equivalence class of pairs $(U,f)$ where $U$ is an open subset of $Y$ containing $P$, and $f$ is a regular function on $U$, and two such pairs $(U,f)$ and $(V,g)$ are equivalent if $f_{|U \cap V} = g_{|U \cap V}$. For a non-empty open subset $U$ of $V$ define $\mathcal{O}_U = \mathcal{O}_U(V) := \bigcap_{P \in U} \mathcal{O}_P(V)$.
\end{definition}

The ring $\mathcal{O}_P$ is indeed a local ring in the sense of commutative 
algebra: its unique maximal ideal $\mathfrak{m}_P$ is the set of germs of 
regular functions which vanish at $P$. The $i$th power $\mathfrak{m}^i_P$ of 
$\mathfrak{m}_P$ consists of the germs of regular functions whose vanishing 
order at $P$ is at least $i$. Taking these powers $\mathfrak{m}^i_P$ as a 
neighbourhood basis of $\mathcal{O}_P$, 
this defines a topology on $\mathcal{O}_P$, the {\bf $\mathfrak{m}_P$-adic 
topology}. The inverse limit $\hat{\mathcal{O}}_P := \lim_\leftarrow 
\mathcal{O}_P/\mathfrak{m}^i_P$ is the {\bf completion} of $\mathcal{O}_P$; 
cf.\ \cite[7.1]{Eisenbud:1995}, \cite[p.~33]{Hartshorne:1977}, 
\cite[VIII \S 2]{Zariski/Samuel:1975b}. It is a local ring whose maximal 
ideal is denoted by $\hat{\mathfrak{m}}_P$. 

\begin{example} \label{projectiveline}
The projective line $\mathbb{P}_1(\mathbb{C}) \cong \mathbb{C} \cup 
\{ \infty \}$ is a non-singular projective curve (see \ref{curvevar} below).
For $P \in \mathbb{C}$ one has
\begin{eqnarray*}
\mathcal{O}_P & = & \left\{ \frac{a}{b} \in \mathbb{C}(t) \mid b(P) \neq 0 \right\}, \\
\mathfrak{m}_P & = & \left\{ \frac{a}{b} \in \mathcal{O}_P \mid a(P) = 0 \right\}, \\
\hat{\mathcal{O}}_P & = & \left\{ \frac{a}{b} \in \mathbb{C}((t)) \mid b(P) \neq 0 \right\}, \\
\hat{\mathfrak{m}}_P & = & \left\{ \frac{a}{b} \in \hat{\mathcal{O}}_P \mid a(P) = 0 \right\}.
\end{eqnarray*}
\end{example}

\begin{definition}\label{rationalfunctions}
Let $V$ be a variety defined over $k$. Then the 
{\bf field $\overline{k}(V)$ of rational functions of $V$} consists of the 
equivalence classes of pairs $(U,f)$ where $U$ is a non-empty open subset 
of $V$ and $f$ is a regular function on $U$, and two such pairs $(U,f)$ and $(V,g)$ are equivalent if $f_{|U \cap V} = g_{|U \cap V}$. The {\bf field of $k$-rational functions of $V/k$} is $$K := k(V) := \{ h \in \overline{k}(V) \mid \sigma(h) = h \mbox{ for all $\sigma \in \mathrm{Gal}(\overline{k}/k)$} \}.$$
\end{definition}

\begin{definition} \label{closed}
For a point $P \in \mathbb{P}_n(\overline{k})$ the set $\{ \sigma(P) \mid \sigma \in \mathrm{Gal}(\overline{k}/k) \}$ is called a {\bf $k$-closed point}. A $k$-closed point of cardinality $1$ is called {\bf $k$-rational}.
\end{definition}

For each $P \in \mathbb{P}_n(\overline{k})$, each homogeneous polynomial $f \in \overline{k}[X]$, and each  $\sigma \in \mathrm{Gal}(\overline{k}/k)$ one has $f(P) = 0$ if and only if $\sigma(f(P)) = 0$ if and only if $\sigma(f)(\sigma(P)) = 0$. Hence, given a projective $k$-variety $V \subseteq \mathbb{P}_n(\overline{k})$, the assertion $P \in V$ is equivalent to the assertion $\{ \sigma(P) \mid \sigma \in \mathrm{Gal}(\overline{k}/k) \} \subset V$. It therefore makes sense to speak of {\bf $k$-closed points} of $V$. The set of $k$-closed points of $V$ is denoted by $V^\circ = V^\circ/k$. The {\bf degree} $\mathrm{deg}(P)$ of a $k$-closed point $P$ equals the number of points it contains.

\begin{example}
For an involutive automorphism $\sigma$ of $\mathbb{C}$, define $\mathbb{K} := \mathrm{Fix}_\mathbb{C}(\sigma)$. The set of $\mathbb{K}$-closed points of $\mathbb{P}_1(\mathbb{C})$ equals the set of $\sigma$-orbits of $\mathbb{P}_1(\mathbb{C})$, i.e., the $\sigma$-fixed/$\mathbb{K}$-rational points of $\mathbb{P}_1(\mathbb{C})$ and the pairs of $\sigma$-conjugate points of $\mathbb{P}_1(\mathbb{C})$.
\end{example}

\section{Curves over finite fields, considered as varieties}

\begin{introduction}
In this section I turn to one of the main objects of study of this survey,
non-singular projective curves. For further reading the sources 
\cite[I]{Hartshorne:1977}, \cite[3]{Niederreiter/Xing:2009}, and
\cite[II]{Serre:1988} or, if one
prefers a very algebraic approach, \cite[I, II]{Chevalley:1951} and \cite[5,
6]{Rosen:2002} are 
highly recommended.
\end{introduction}

\begin{definition} \label{curvevar} \label{nonsingular}
Let $k$ be a perfect field. A projective variety of dimension $1$ defined over $k$ is called a {\bf projective curve} over $k$
(cf.\ \ref{begin}).
A projective curve $Y$ is called {\bf non-singular} at $P \in Y$, if 
$\mathcal{O}_P$ (cf.\ \ref{localring}) is a discrete valuation ring. It is called {\bf non-singular}, if it is non-singular at each of its points. 
\end{definition}
For $Y$ non-singular at $P$, the valuation of $\mathcal{O}_P$ is given by 
$$\overline{\nu}_P : \mathcal{O}_P \to \mathbb{Z} \cup 
\{ \infty \}: x \mapsto \sup\left\{ i \in \mathbb{N} \cup \{ 0 \} \mid 
x \in \mathfrak{m}^i_P \right\},$$ with the understanding that 
$\mathfrak{m}^0_P = \mathcal{O}_P$. This 
valuation extends to the valuation $\hat{\mathcal{O}}_P \to \mathbb{Z} \cup 
\{ \infty \} : x \mapsto \sup\left\{ i \in \mathbb{N} \cup \{ 0 \} \mid x 
\in \hat{\mathfrak{m}}^i_P \right\}$, as $\hat{\mathfrak{m}}_P 
\cap \mathcal{O}_P = \mathfrak{m}_P$; cf.\ \cite[7.1]{Eisenbud:1995}, 
\cite[I.5.4A]{Hartshorne:1977}. The valuation $\overline{\nu}_P : 
\mathcal{O}_P \to \mathbb{Z} \cup \{ \infty \}$ also extends to a valuation 
on its field of fractions $\overline{k}(Y)$ via $\overline{\nu}_P(x) := \overline{\nu}_P(a) - \overline{\nu}_P(b)$ for $a, b \in \mathcal{O}_P$ with $\frac{a}{b} = x$.

The geometric intuition is that $\overline{\nu}_P(x)$ indicates whether $P$ is a zero or a pole of $x$ and counts its multiplicity (cf.\ \ref{projectiveline}).

\medskip
An {\bf algebraic function field} (in one variable over $k$) is an extension field $\mathbb{F}$ of $k$ that admits an element $x$ that is transcendental over $k$ such that $\mathbb{F}/k(x)$ is a field extension of finite degree.    

\begin{theorem}[{\cite[3.2.9]{Niederreiter/Xing:2009}}]
There is a one-to-one correspondence between $k$-isomorphism classes of 
non-singular projective curves over $k$ and $k$-isomorphism classes of 
algebraic function fields of one variable with full constant field $k$, via 
the map $Y/k \mapsto k(Y)$ (cf.\ \ref{rationalfunctions}).
\end{theorem}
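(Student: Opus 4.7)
The plan is to verify that the assignment $Y/k \mapsto k(Y)$ is well-defined, injective, and surjective on isomorphism classes, building on the dictionary between local rings of points and discrete valuation rings of the function field.

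First I would show well-definedness. Given a non-singular projective curve $Y/k$, the field $\overline{k}(Y)$ has transcendence degree equal to $\dim Y = 1$ over $\overline{k}$, essentially because one can cover $Y$ by affine pieces whose coordinate rings are integral domains of Krull dimension $1$. Taking Galois invariants, $k(Y) = \overline{k}(Y)^{\mathrm{Gal}(\overline{k}/k)}$ has transcendence degree $1$ over $k$ and is finitely generated as a field over $k$ since $Y$ is. It remains to check that the full constant field of $k(Y)$ is $k$ itself; this follows because any element algebraic over $k$ lying in $k(Y)$ generates a finite constant extension that would force $Y$ to decompose over $\overline{k}$, contradicting irreducibility as a $k$-variety.

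Next I would establish injectivity via the standard extension principle for rational maps from non-singular curves. A $k$-isomorphism $\varphi \colon k(Y) \to k(Y')$ gives, by restriction to an affine chart and clearing denominators, a dominant rational map $Y' \dashrightarrow Y$. Because $Y'$ is a non-singular curve, every rational map from $Y'$ to a projective variety is defined everywhere, so we obtain a morphism $f \colon Y' \to Y$ defined over $k$. Symmetrically we get $g \colon Y \to Y'$, and the compositions $f \circ g$ and $g \circ f$ induce the identity on function fields, hence agree with the identity on dense opens, hence (by separatedness of projective varieties) everywhere. Thus $Y \cong Y'$ over $k$.

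For surjectivity I would use the abstract Riemann surface construction. Given an algebraic function field $F$ of one variable with full constant field $k$, let $X_F$ denote the set of all discrete valuation rings $\mathcal{O} \subset F$ with $\mathcal{O} \supsetneq k$ and $\mathrm{Frac}(\mathcal{O}) = F$ (equivalently, the set of places of $F/k$), topologized so that the closed sets are the finite subsets together with $X_F$ itself, and equipped with the sheaf $U \mapsto \bigcap_{\mathcal{O} \in U} \mathcal{O}$. One needs to exhibit a closed immersion of $X_F$ into some $\mathbb{P}_n(\overline{k})$ whose image is a non-singular projective $k$-curve $Y$; this is achieved by choosing a separating transcendence basis $x \in F$, taking a primitive element $y$ for $F/k(x)$, and using a projective closure of the affine curve defined by the minimal polynomial, then normalizing to resolve singularities (Riemann--Zariski). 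By construction $k(Y) \cong F$ as $k$-algebras, and the points of $Y$ correspond bijectively to the places, so each $\mathcal{O}_P$ is a discrete valuation ring, confirming non-singularity via \ref{nonsingular}.

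The main obstacle is the surjectivity step: producing a genuine projective model from the purely field-theoretic data of $F$, and then checking that the resulting $Y$ is non-singular at every point and has $k(Y) \cong F$. The key technical ingredient here is that normalization of a projective curve stays projective and is non-singular, which rests on the fact that a one-dimensional Noetherian integrally closed domain is a Dedekind domain and hence locally a DVR; once this is in place, everything else is bookkeeping via the equivalences already recorded in \ref{localring} and \ref{nonsingular}.
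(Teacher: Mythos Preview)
The paper does not supply its own proof of this statement: it is quoted verbatim as \cite[3.2.9]{Niederreiter/Xing:2009} with no argument given, in keeping with the survey nature of the article. There is therefore nothing in the paper to compare your proposal against.

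That said, your outline is the standard one and is essentially what one finds in the cited reference (and in \cite[I.6]{Hartshorne:1977}): well-definedness from the transcendence-degree and geometric-irreducibility considerations, injectivity from the extension theorem for rational maps out of non-singular curves, and surjectivity via the abstract Riemann surface/normalized projective model. Two small points worth tightening: for the full-constant-field claim, the cleanest route in this paper's setup is to note that the curve is by definition irreducible over $\overline{k}$ (cf.\ \ref{begin}), so $\overline{k}$ is algebraically closed in $\overline{k}(Y)$, and then taking $\mathrm{Gal}(\overline{k}/k)$-invariants gives the result directly; your phrasing about $Y$ ``decomposing over $\overline{k}$'' is a little oblique. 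For surjectivity, you should also remark that the normalization of a projective $k$-curve remains defined over $k$ (not merely over $\overline{k}$), which is where the hypothesis that $k$ is perfect enters.
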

Here, the {\bf full constant field} of an algebraic function field $K$ over 
$k$ is the algebraic closure of $k$ in $K$. In case the full constant field 
is finite, $K$ is a {\bf global function field}. For example, $\mathbb{F}_q(t)$
is a global function field with full constant field $\mathbb{F}_q$.   

\begin{definition} \label{place}
Two discrete valuations $\nu_1$ and $\nu_2$ of an algebraic function field $K$ are {\bf equivalent} if there exists a constant $c > 0$ such that $\nu_1(x) = c\nu_2(x)$ for all $0 \neq x \in K$. An equivalence class of discrete valuations is called a {\bf place}. The {\bf degree} of a valuation/place is the degree of the residue class field $\mathcal{O}_P/\mathfrak{m}_P$ over the constant field $k$. This is always a finite number; cf.\ \cite[1.5.13]{Niederreiter/Xing:2009}. 
\end{definition}

\begin{theorem}[{\cite[3.1.15]{Niederreiter/Xing:2009}}]
Let $Y/\mathbb{F}_q$ be a non-singular projective curve. Then there exists a 
natural one-to-one correspondence between $\mathbb{F}_q$-closed points 
of $Y$ and places (cf.\ \ref{place}) of the field $K=\mathbb{F}_q(Y)$ of $\mathbb{F}_q$-rational functions (cf.\ \ref{rationalfunctions}). Moreover, the degree of an $\mathbb{F}_q$-closed point is equal to the degree of the corresponding place.
\end{theorem}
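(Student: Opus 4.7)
The plan is to construct the correspondence explicitly, then verify bijectivity and match degrees. Given an $\mathbb{F}_q$-closed point $P = \{P_0, \ldots, P_{d-1}\}$ of $Y$, each $P_i$ carries a discrete valuation $\overline{\nu}_{P_i}$ on $\overline{\mathbb{F}_q}(Y)$. These $d$ valuations form a single $\mathrm{Gal}(\overline{\mathbb{F}_q}/\mathbb{F}_q)$-orbit, so their restrictions to the Galois-fixed subfield $K = \mathbb{F}_q(Y)$ all coincide, producing one discrete valuation $\nu_P$ of $K$; sending $P$ to the place represented by $\nu_P$ defines the candidate map $\Phi$.

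For injectivity, distinct $\mathbb{F}_q$-closed points have disjoint Galois orbits, so the local rings $\mathcal{O}_{P_0}$ and $\mathcal{O}_{P_0'}$ are distinct valuation subrings of $\overline{\mathbb{F}_q}(Y)$, and a weak approximation argument for independent discrete valuations supplies an element of $K$ separating $\nu_P$ from $\nu_{P'}$. For surjectivity, I would start from a place of $K$ represented by a valuation $\nu$, extend it along the algebraic extension $\overline{\mathbb{F}_q}(Y)/K$ via the standard extension theorem, and obtain finitely many Galois-conjugate extensions $\overline{\nu}_1, \ldots, \overline{\nu}_d$. Each $\overline{\nu}_i$ is trivial on $\overline{\mathbb{F}_q}$, hence equals $\overline{\nu}_{P_i}$ for a unique point $P_i \in Y$; the Galois orbit $\{P_1, \ldots, P_d\}$ is then the $\mathbb{F}_q$-closed point mapping to $\nu$ under $\Phi$.

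For the degree statement: by Definition \ref{place}, $\deg(\nu_P)$ equals the dimension over $\mathbb{F}_q$ of the residue field $(\mathcal{O}_{P_0} \cap K)/(\mathfrak{m}_{P_0} \cap K)$. Since elements of $\mathcal{O}_{P_0} \cap K$ are Galois-fixed in $\overline{\mathbb{F}_q}(Y)$, reduction modulo $\mathfrak{m}_{P_0}$ identifies this residue field with the subfield of $\mathcal{O}_{P_0}/\mathfrak{m}_{P_0} = \overline{\mathbb{F}_q}$ fixed by $\mathrm{Stab}_{\mathrm{Gal}(\overline{\mathbb{F}_q}/\mathbb{F}_q)}(P_0)$. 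This stabilizer has index exactly $d = \deg(P)$ in the full Galois group, so its fixed subfield is $\mathbb{F}_{q^d}$, yielding $\deg(\nu_P) = d = \deg(P)$.

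The main obstacle is the key geometric fact powering surjectivity: that every non-trivial discrete valuation of $\overline{\mathbb{F}_q}(Y)$ trivial on constants arises as $\overline{\nu}_P$ for a unique point $P \in Y$. This uses both non-singularity of $Y$, which guarantees enough valuation-bearing points, and projectivity, which prevents valuations from sitting at nonexistent points at infinity, and constitutes the technical heart of the point--valuation dictionary for smooth projective curves.
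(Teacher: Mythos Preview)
The paper does not supply its own proof of this theorem; it is stated with a citation to \cite[3.1.15]{Niederreiter/Xing:2009} and used as a black box, so there is nothing to compare your argument against within the paper itself. The construction you describe for $\Phi$ is exactly the one the paper records immediately afterwards in paragraph~\ref{valuation} (citing \cite[3.1.14]{Niederreiter/Xing:2009}).

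On the merits of your sketch: the overall architecture is standard and sound, and you correctly isolate the hard step, namely that every nontrivial discrete valuation of $\overline{\mathbb{F}_q}(Y)$ trivial on constants is $\overline{\nu}_P$ for a unique $P\in Y$. One point deserves more care. In the degree computation you assert that $(\mathcal{O}_{P_0}\cap K)/(\mathfrak{m}_{P_0}\cap K)$ maps \emph{onto} the subfield of $\overline{\mathbb{F}_q}$ fixed by the stabilizer of $P_0$. Injectivity is clear, but surjectivity is not automatic: you need that the extension $\overline{\mathbb{F}_q}(Y)/K$ is everywhere unramified (true, since it is a constant field extension), so that the decomposition group at $P_0$ maps isomorphically onto $\mathrm{Gal}(\mathbb{F}_{q^d}/\mathbb{F}_q)$ rather than merely surjectively with a nontrivial inertia kernel. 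Without this, the residue field of $\nu_P$ could a priori be strictly smaller than $\mathbb{F}_{q^d}$. Once you add that sentence, the degree argument is complete.
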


\newparagraph \label{valuation}
Given a point $P$ in an $\mathbb{F}_q$-closed point of $Y$, the valuation 
$\overline{\nu}_P : \mathcal{O}_P \to \mathbb{Z} \cup \{ \infty \}$ 
(cf.\ \ref{nonsingular})  restricts to a valuation $\nu_P : 
\mathcal{O}_{P,K} := \mathcal{O}_P \cap \mathbb{F}_q(Y) \to \mathbb{Z} 
\cup \{ \infty \}$. By \cite[3.1.14]{Niederreiter/Xing:2009} this valuation 
only depends on the $\mathbb{F}_q$-closed point of $Y$ containing $P$ and 
not on the particular choice of $P$ inside that $\mathbb{F}_q$-closed point. 
As before, $\nu_P$ extends to the field of $\mathbb{F}_q$-rational functions 
$K = \mathbb{F}_q(Y)$, the completion $\hat{\mathcal{O}}_{P,K} := 
\widehat{\mathcal{O}_P \cap K}$, and to the field of fractions $K_P$ of the completion
$\hat{\mathcal{O}}_{P,K}$. The field $K_P$ is the {\bf local function field at $P$}.
For example, $\mathbb{F}_q((t))$ is the local function field of the global 
function field $\mathbb{F}(t)$ at the place corresponding to the irreducible
polynomial $t$.

\begin{definition} \label{divisor}
Let $Y/\mathbb{F}_q$ be a non-singular projective curve. The {\bf Weil divisor group}
$\mathrm{Div}(Y)=\mathrm{Div}(Y/\mathbb{F}_q)$ is the free abelian group over the set $Y^\circ$ of $\mathbb{F}_q$-closed points of $Y$ (cf.\ \ref{closed}).
An element $D = \sum_{P \in Y^\circ} n_P P \in \mathrm{Div}(Y)$ is called a {\bf Weil divisor} of $Y$. It is {\bf effective}, in symbols $D \geq 0$, if $n_P \geq 0$ for all $P \in Y^\circ$. For two divisors $D_1$ and $D_2$ of $Y$ one writes $D_1 \geq D_2$, if $D_1 - D_2 \geq 0$.
The {\bf degree} $\mathrm{deg}(D)$ of a Weil divisor 
$D = \sum_{P \in Y^\circ} n_P P$ is given by 
$\mathrm{deg}(D) := \sum_{P \in Y^\circ} n_P \mathrm{deg}(P)$ (cf.~\ref{closed}). Also, define $\nu_P(D) := n_P$.
\end{definition}
For $0 \neq x \in K=\mathbb{F}_q(Y)$, define the {\bf divisor} 
$\mathrm{div}(x)$ of $x$ by $\mathrm{div}(x) := \sum_{P \in Y^\circ} 
\nu_P(x) P$. As, by \cite[3.3.2]{Niederreiter/Xing:2009},
\cite[5.1]{Rosen:2002}, any $0 \neq x \in K$ admits only finitely many zeros 
(i.e., points $P \in Y^\circ$ with $\nu_P(x) > 0$) and poles (i.e., 
points $P \in Y^\circ$ with $\nu_P(x) < 0$), the divisor $\mathrm{div}(x)$ 
indeed is a Weil divisor. A Weil divisor 
obtained in this way is {\bf principal}, and has degree $0$; cf.~\cite[3.4.3]{Niederreiter/Xing:2009}, \cite[5.1]{Rosen:2002}. Two Weil divisors that differ by a principal divisor are called {\bf equivalent}.

\begin{definition}
For a divisor $D$ of $Y/\mathbb{F}_q$, the {\bf Riemann--Roch space} $L(D)$ is defined
as $$L(D) := \{ 0 \neq x \in K = \mathbb{F}_q(Y) \mid \mathrm{div}(x) + D \geq 0 \} \cup \{ 0 \}.$$ 
\end{definition}
The Riemann--Roch space $L(D)$ of a divisor $D$ is an $\mathbb{F}_q$-vector space of finite dimension (cf.\ \cite[3.4.1(iv)]{Niederreiter/Xing:2009}).

\begin{definition} \label{incompleteadeles}
Define the {\bf ring of repartitions} as $$\mathbb{A}_K := \{ (x_P)_{P \in Y^{\circ}} \in \prod_{P \in Y^{\circ}} K \mid x_P \in \mathcal{O}_P \mbox{ for almost all $P\in Y^\circ$ } \} .$$
By \cite[p.~25]{Chevalley:1951}, \cite[3.3.2]{Niederreiter/Xing:2009}, the field $K$ embeds diagonally in $\mathbb{A}_K$.
For any divisor $D \in \mathrm{Div}(Y)$ define $\mathbb{A}_K(D) := 
\{ x \in \mathbb{A}_K \mid \nu_P(x_P) + \nu_P(D) \geq 0 \mbox{ for all 
$P \in Y^\circ$} \}$. Note that $\mathbb{A}_K(D)$ is an
$\mathbb{F}_q$-subvector space of $\mathbb{A}_K$.
\end{definition}

\begin{definition}
A {\bf Weil differential} of a non-singular projective curve 
$Y/\mathbb{F}_q$, resp.\ of its global function field $K$, is an $\mathbb{F}_q$-linear map $\omega : \mathbb{A}_K \to \mathbb{F}_q$ such that $\omega_{|\mathbb{A}_K(D)+K} = 0$ for some divisor $D \in \mathrm{Div}(Y)$. Denote by $\Omega_K$ the set of all Weil differentials of $K$ and by $\Omega_K(D)$ the set of all Weil differentials of $K$ that vanish on $\mathbb{A}_K(D) + K$.

\end{definition}
Note the difference between the ring of repartitions $\mathbb{A}_K$ and the ring of ad\`eles $\hat{\mathbb{A}}_K$ defined in \ref{adeles}
below (completions). The concept of a Weil differential can equally well be 
introduced using the ring of ad\`eles $\hat{\mathbb{A}}_K$, cf.\
\cite{Rosen:2002}.

\begin{observation} \label{omegaadel}
Let $D \in \mathrm{Div}(Y)$. Then $\Omega_K(D) \cong \mathbb{A}_K/\mathbb{A}_K(D) + K$ as $\mathbb{F}_q$-vector spaces. 
\end{observation}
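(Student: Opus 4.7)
The plan is to realize $\Omega_K(D)$ as the $\mathbb{F}_q$-linear dual of the quotient $\mathbb{A}_K/(\mathbb{A}_K(D)+K)$, and then invoke finite-dimensionality to obtain the (non-canonical) isomorphism asserted.

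First I would unwind the definitions. By definition, any $\omega\in\Omega_K(D)$ is an $\mathbb{F}_q$-linear map $\mathbb{A}_K\to\mathbb{F}_q$ which by hypothesis vanishes on the $\mathbb{F}_q$-subspace $\mathbb{A}_K(D)+K\subseteq\mathbb{A}_K$. Hence $\omega$ factors uniquely through the natural projection $\pi:\mathbb{A}_K\twoheadrightarrow\mathbb{A}_K/(\mathbb{A}_K(D)+K)$, producing an $\mathbb{F}_q$-linear functional $\bar\omega:\mathbb{A}_K/(\mathbb{A}_K(D)+K)\to\mathbb{F}_q$. The assignment $\omega\mapsto\bar\omega$ is plainly $\mathbb{F}_q$-linear; it is injective because $\pi$ is surjective, and it is surjective because any functional $\bar\omega$ on the quotient composes with $\pi$ to give an $\omega:=\bar\omega\circ\pi$ which, by construction, is $\mathbb{F}_q$-linear and vanishes on $\mathbb{A}_K(D)+K$, hence lies in $\Omega_K(D)$. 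This establishes the canonical identification
$$\Omega_K(D)\;\cong\;\mathrm{Hom}_{\mathbb{F}_q}\bigl(\mathbb{A}_K/(\mathbb{A}_K(D)+K),\,\mathbb{F}_q\bigr).$$

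To descend from the dual to the space itself, I need the quotient $\mathbb{A}_K/(\mathbb{A}_K(D)+K)$ to be finite-dimensional over $\mathbb{F}_q$, for then any choice of basis yields a (non-canonical) isomorphism between a finite-dimensional $\mathbb{F}_q$-vector space and its dual, completing the proof. This finite-dimensionality is the substantive content of the statement and is the point I expect to be the main obstacle, since nothing proved so far in the survey guarantees it.

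I would establish it by comparing $\mathbb{A}_K(D)$ with $\mathbb{A}_K(D')$ for divisors $D'\geq D$. On the one hand, local considerations at the finitely many places where $D'$ and $D$ differ show that $\mathbb{A}_K(D')/\mathbb{A}_K(D)$ is an $\mathbb{F}_q$-vector space of dimension exactly $\deg(D')-\deg(D)$. On the other hand, $L(D')/L(D)$ embeds into this quotient, and comparing dimensions yields the classical Riemann-style inequality showing that $\dim_{\mathbb{F}_q}\mathbb{A}_K/(\mathbb{A}_K(D)+K)$ is bounded above independently of $D$ (by the genus $g$ of $Y$); this is the standard route taken, for instance, in \cite[3.4]{Niederreiter/Xing:2009} and \cite[5.4]{Rosen:2002}, and may simply be cited. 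With finite-dimensionality in hand, $V\cong V^*$ for finite-dimensional $V$ over $\mathbb{F}_q$ finishes the argument.
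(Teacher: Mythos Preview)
Your argument is correct and is the standard route: the canonical identification $\Omega_K(D)\cong\bigl(\mathbb{A}_K/(\mathbb{A}_K(D)+K)\bigr)^\ast$ is immediate from the definitions, and finite-dimensionality of the quotient (which you correctly flag as the only nontrivial input) then yields the claimed non-canonical isomorphism. The paper itself offers no proof at all---it records the statement as an ``Observation'' and moves on, relying implicitly on the same standard references you cite for the finite-dimensionality. So there is nothing to compare against; your write-up simply supplies the details the survey omits.

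One small remark: your sketch of the finite-dimensionality step stops a little short. Knowing that $\mathbb{A}_K(D')/\mathbb{A}_K(D)$ has dimension $\deg(D')-\deg(D)$ and that $L(D')/L(D)$ embeds into it does not by itself bound $\dim_{\mathbb{F}_q}\mathbb{A}_K/(\mathbb{A}_K(D)+K)$; one also needs the fact that $\mathbb{A}_K=\mathbb{A}_K(D')+K$ for $\deg(D')$ sufficiently large, which is where Riemann's inequality is actually invoked. Since you explicitly defer to \cite[3.4]{Niederreiter/Xing:2009} and \cite[5.4]{Rosen:2002} for this step, the proof is complete as written, but if you intend to make the argument self-contained you should include that point.
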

If $0 \neq \omega \in \Omega_K$, then by
\cite[3.6.11]{Niederreiter/Xing:2009}, \cite[6.8]{Rosen:2002} the set $\{ D \in \mathrm{Div}(Y) \mid \omega_{|\mathbb{A}_K(D) + K} = 0 \}$ has a unique maximal element with respect to $\geq$.
This maximal element is called a {\bf canonical divisor} of $Y$ and denoted by $(\omega)$. By
\cite[3.6.10]{Niederreiter/Xing:2009}, \cite[6.10]{Rosen:2002} 
\begin{eqnarray}
\mathrm{dim}_K(\Omega_K) & = & 1 \label{dim1}
\end{eqnarray}
 and by \cite[3.6.13]{Niederreiter/Xing:2009}, \cite[6.9]{Rosen:2002} the Weil differential $x\omega : \mathbb{A}_K \to \mathbb{F}_q : a \mapsto \omega(xa)$ satisfies
\begin{eqnarray}
(x\omega) = \mathrm{div}(x) + (\omega) \label{mult}
\end{eqnarray}
 for any $0 \neq x \in K$, $0 \neq \omega \in \Omega_K$. Hence any two canonical divisors of $Y$ are equivalent (cf.\ \ref{divisor}).

\begin{observation} \label{lomega}
Let $D$ be a divisor of $Y/\mathbb{F}_q$ and let $W = (\omega)$ be a canonical divisor of $Y$. Then $$L(W-D) \to \Omega_K(D) : x \mapsto x\omega$$ is an isomorphism of $\mathbb{F}_q$-vector spaces.
\end{observation}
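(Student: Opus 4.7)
The plan is to exploit the two key facts recalled just before the observation: equation (\ref{mult}), which says $(x\omega) = \mathrm{div}(x) + (\omega)$, and equation (\ref{dim1}), which says $\Omega_K$ is one-dimensional as a $K$-vector space. The map is manifestly $\mathbb{F}_q$-linear (indeed $K$-linear), so the work is (a) to check that it lands in $\Omega_K(D)$, (b) to check injectivity, and (c) to check surjectivity.

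For well-definedness, I would argue as follows. Suppose $x \in L(W-D)$, so that $\mathrm{div}(x) + W - D \geq 0$. By (\ref{mult}) this says $(x\omega) = \mathrm{div}(x) + W \geq D$, i.e., $\nu_P(D) \leq \nu_P((x\omega))$ for every $P$. It follows that $\mathbb{A}_K(D) \subseteq \mathbb{A}_K((x\omega))$, and since by definition $x\omega$ vanishes on $\mathbb{A}_K((x\omega)) + K$, it also vanishes on $\mathbb{A}_K(D) + K$. Hence $x\omega \in \Omega_K(D)$, and the map is well-defined. Linearity is immediate from $\omega((x+y)a) = \omega(xa) + \omega(ya)$ and $\mathbb{F}_q$-linearity of $\omega$.

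For injectivity, if $x\omega = 0$ with $x \neq 0$, then since $\omega \neq 0$ and $\Omega_K$ is a one-dimensional $K$-vector space by (\ref{dim1}), multiplication by $x$ is an isomorphism of $\Omega_K$, contradicting $x\omega = 0$. For surjectivity, let $\eta \in \Omega_K(D)$; we may assume $\eta \neq 0$. By (\ref{dim1}) there exists $x \in K^\times$ with $\eta = x\omega$, and it only remains to verify $x \in L(W-D)$. Since $\eta$ vanishes on $\mathbb{A}_K(D) + K$, the maximality property defining $(\eta)$ forces $D \leq (\eta)$; but by (\ref{mult}) one has $(\eta) = \mathrm{div}(x) + W$, so $\mathrm{div}(x) + W - D \geq 0$, giving $x \in L(W-D)$ as required.

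The only conceptually delicate point is the monotonicity step $D \leq (x\omega) \Rightarrow \mathbb{A}_K(D) \subseteq \mathbb{A}_K((x\omega))$, which however follows directly from the pointwise definition of $\mathbb{A}_K(\cdot)$ in \ref{incompleteadeles}; the rest is purely formal manipulation of (\ref{dim1}) and (\ref{mult}). I therefore do not anticipate any serious obstacle.
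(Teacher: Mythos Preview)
Your argument is correct and follows essentially the same route as the paper: use (\ref{mult}) to verify that the map lands in $\Omega_K(D)$ and, for surjectivity, invoke (\ref{dim1}) to write an arbitrary nonzero $\eta \in \Omega_K(D)$ as $x\omega$ and then use (\ref{mult}) again together with $(\eta)\geq D$ to get $x\in L(W-D)$. The only difference is that you spell out the monotonicity step $D\leq (x\omega)\Rightarrow \mathbb{A}_K(D)\subseteq \mathbb{A}_K((x\omega))$ and give an explicit reason for injectivity, both of which the paper leaves as ``clear''.
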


\begin{proof}
For $0 \neq x \in L(W-D)$ we have\footnote{The notation $\stackrel{{(\ref{mult})}}{=}$ means that (\ref{mult}) is a justification for the equality.} $(x\omega) \stackrel{{(\ref{mult})}}{=} \mathrm{div}(x) + (\omega) \geq -(W-D)+W = D$, so that indeed $x\omega \in \Omega_K(D)$. 
Injectivity and $\mathbb{F}_q$-linearity of the map $x \mapsto x\omega$ are clear.  In order to prove surjectivity, let $0\neq \omega_1 \in \Omega_K(D)$. By (\ref{dim1}) there exists $0 \neq x \in K$ such that $\omega_1 = x\omega$. Since
we have $\mathrm{div}(x) + W \stackrel{(\ref{mult})}{=} (x\omega) = (\omega_1)
 \geq D$, it follows that $x \in L(W-D)$. 
\end{proof}

\begin{definition} \label{genus}
Let $Y/\mathbb{F}_q$ be a non-singular projective curve and let $W$ be a
canonical divisor of $Y$. Define the {\bf genus} $g$ of the curve $Y/\mathbb{F}_q$ by $$g := \mathrm{dim}_{\mathbb{F}_q}(L(W)) \stackrel{\ref{lomega}}{=}
\mathrm{dim}_{\mathbb{F}_q}(\Omega_K(0)) \stackrel{\ref{omegaadel}}{=} \mathrm{dim}_{\mathbb{F}_q}(\mathbb{A}_K/\mathbb{A}_K(0) + K).$$
\end{definition}

\begin{theorem}[Riemann--Roch Theorem,
{\cite[p.~30]{Chevalley:1951}, \cite[3.6.14]{Niederreiter/Xing:2009}, \cite[5.4]{Rosen:2002}}] \label{RiemannRoch}
Let $Y/\mathbb{F}_q$ be a non-singular projective curve of genus $g$ and let 
$W$ be a canonical divisor. Then for any divisor $D \in \mathrm{Div}(Y)$ one
has $$\mathrm{dim}_{\mathbb{F}_q}(L(D)) - \mathrm{dim}_{\mathbb{F}_q}(L(W-D)) = \mathrm{deg}(D) + 1 - g.$$
\end{theorem}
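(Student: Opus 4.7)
The plan is to adopt the classical Weil--style proof, which meshes naturally with the setup the paper has built. Write $\ell(D) := \mathrm{dim}_{\mathbb{F}_q}(L(D))$ and $i(D) := \mathrm{dim}_{\mathbb{F}_q}(\Omega_K(D))$. By Observation \ref{lomega} one has $i(D) = \ell(W-D)$, so the theorem is equivalent to the assertion
$$\ell(D) - i(D) = \mathrm{deg}(D) + 1 - g.$$
I would prove this by showing that the function $D \mapsto \ell(D) - i(D) - \mathrm{deg}(D)$ is constant on $\mathrm{Div}(Y)$ and then evaluating at $D = 0$. At $D = 0$ one has $\ell(0) = 1$, since on a non-singular projective curve over $\mathbb{F}_q$ a rational function without poles is forced to lie in the full constant field $\mathbb{F}_q$; $i(0) = g$ by Definition \ref{genus}; and $\mathrm{deg}(0) = 0$. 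Hence the constant value must be $1 - g$.

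For the constancy, an induction on the support of $D_2 - D_1$ reduces the problem to the one-step case $D_2 = D_1 + P$ for a single $\mathbb{F}_q$-closed point $P$. The key device is the four-term exact sequence of $\mathbb{F}_q$-vector spaces
$$0 \to L(D_1) \to L(D_2) \to \mathbb{A}_K(D_2)/\mathbb{A}_K(D_1) \to (\mathbb{A}_K(D_2) + K)/(\mathbb{A}_K(D_1) + K) \to 0,$$
where the middle map is the diagonal embedding $K \hookrightarrow \mathbb{A}_K$ of Definition \ref{incompleteadeles} followed by reduction modulo $\mathbb{A}_K(D_1)$, and the last map is induced by inclusion. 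Exactness is a routine diagram chase built on the identity $K \cap \mathbb{A}_K(D) = L(D)$ inside $\mathbb{A}_K$. Taking alternating dimensions and invoking Observation \ref{omegaadel} to rewrite $i(D)$ as the codimension of $\mathbb{A}_K(D) + K$ in $\mathbb{A}_K$ yields
$$\bigl(\ell(D_2) - i(D_2)\bigr) - \bigl(\ell(D_1) - i(D_1)\bigr) = \mathrm{dim}_{\mathbb{F}_q}\bigl(\mathbb{A}_K(D_2)/\mathbb{A}_K(D_1)\bigr).$$

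The final ingredient is a local computation: $\mathrm{dim}_{\mathbb{F}_q}(\mathbb{A}_K(D_2)/\mathbb{A}_K(D_1)) = \mathrm{deg}(P)$. Since $D_2 - D_1 = P$ is supported at the single place $P$, the quotient is controlled entirely by the factor at $P$ and reduces to the $\mathbb{F}_q$-dimension of the quotient of two consecutive powers of $\mathfrak{m}_P$ inside the local function field $K_P$ of \ref{valuation}. That dimension equals the residue-field degree $[\mathcal{O}_{P,K}/\mathfrak{m}_P : \mathbb{F}_q] = \mathrm{deg}(P)$ of Definition \ref{place}. Substituting back gives $(\ell(D_2) - i(D_2)) - (\ell(D_1) - i(D_1)) = \mathrm{deg}(D_2) - \mathrm{deg}(D_1)$, which is exactly the desired constancy.

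The main obstacle I anticipate is not any one step in isolation but the combined bookkeeping required for the local computation: one must simultaneously verify that the ring of repartitions decomposes place-by-place in the way needed, that $K \cap \mathbb{A}_K(D) = L(D)$ (used for exactness), and that consecutive powers of $\mathfrak{m}_P$ have the stated residue-field dimension. All of these are standard consequences of the discrete valuation ring structure at $P$ and the strong approximation property of $K$ inside $\mathbb{A}_K$, but they have to be handled carefully and in the correct order. Once they are in place, together with the identification $\ell(0) = 1$, the formula $\ell(D) - \ell(W-D) = \mathrm{deg}(D) + 1 - g$ falls out immediately by substituting $i(D) = \ell(W - D)$ via Observation \ref{lomega}.
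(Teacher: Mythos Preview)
Your argument is the standard Weil proof via repartitions and is correct as written; the four-term exact sequence, the local computation $\dim_{\mathbb{F}_q}\bigl(\mathbb{A}_K(D_1+P)/\mathbb{A}_K(D_1)\bigr)=\deg(P)$, and the evaluation at $D=0$ all go through exactly as you describe. Note, however, that the paper does not supply its own proof of this theorem: it is stated with citations to Chevalley, Niederreiter--Xing, and Rosen and then used as a black box (the surrounding apparatus of repartitions and Weil differentials is developed precisely so that those references apply verbatim). Your proposal is essentially the proof found in those cited sources, so in that sense it is fully aligned with what the paper intends.
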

The Riemann--Roch theorem combined with the study of the zeta function of
the global function field $\mathbb{F}_q(Y)$ allows one to establish the
following useful result.

\begin{proposition}[{\cite[4.1.10]{Niederreiter/Xing:2009}}]\label{invertible1}
Let $Y/\mathbb{F}_q$ be a non-singular projective curve. Then there exists 
a Weil divisor of $Y$ of degree~$1$.
\end{proposition}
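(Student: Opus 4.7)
The strategy is to deduce the proposition from the analytic properties of the zeta function of $Y$, in the classical manner of F.~K.~Schmidt.

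Let $\delta > 0$ denote the smallest positive degree of a divisor of $Y$. Since $\mathrm{deg}\colon \mathrm{Div}(Y) \to \mathbb{Z}$ is a group homomorphism, $\delta$ generates its image and the proposition is equivalent to the assertion $\delta = 1$. Introduce the formal series
$$Z(t) := \sum_{n \geq 0} A_n t^n, \qquad A_n := \#\{\,D \in \mathrm{Div}(Y) \mid D \geq 0,\ \mathrm{deg}(D) = n\,\}.$$
Since every divisor of $Y$ has degree divisible by $\delta$, the coefficient $A_n$ vanishes whenever $\delta \nmid n$, so $Z(t)$ lies in $\mathbb{Z}[[t^\delta]]$; in particular, $Z(\zeta t) = Z(t)$ for every $\delta$-th root of unity $\zeta$.

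The core step is to extract from the Riemann--Roch theorem \ref{RiemannRoch} enough rationality of $Z(t)$ to force $\delta = 1$. Grouping the sum defining $A_n$ by linear equivalence classes and using the bijection between effective representatives of a class $[D]$ and $\bigl(L(D)\setminus\{0\}\bigr)/\mathbb{F}_q^\times$ yields
$$A_n \;=\; \sum_{[D]:\, \mathrm{deg}[D] = n} \frac{q^{\dim L(D)} - 1}{q-1}.$$
Invoking \ref{RiemannRoch} in the range $\mathrm{deg}(D) > 2g-2$, where $\dim L(D) = \mathrm{deg}(D) + 1 - g$ unconditionally, simultaneously delivers the finiteness of $h := |\mathrm{Pic}^0(Y)|$, a manifestly rational expression for $Z(t)$, and the asymptotic $A_n \sim h q^{n+1-g}/(q-1)$, which identifies $t = 1$ as a simple pole of $Z$. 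Applying \ref{RiemannRoch} symmetrically via the involution $[D] \leftrightarrow [W-D]$ produces the functional equation $Z(t) = q^{g-1}t^{2g-2}Z(1/(qt))$, and a careful analysis then pins the pole locus of $Z$ in $\mathbb{C}$ to $\{1,1/q\}$; equivalently, $(1-t)(1-qt)Z(t) \in \mathbb{Z}[t]$ is a polynomial.

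The conclusion follows from the incompatibility of these last two statements with $\delta > 1$. The invariance $Z(\zeta t) = Z(t)$ for $\delta$-th roots of unity $\zeta$, combined with the simple pole at $t = 1$, would force $Z$ to have a pole at each of the $\delta$ roots of unity on $|t| = 1$; but $(1-t)(1-qt)$ admits only the single zero $t = 1$ on the unit circle. Hence $\delta = 1$.

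I expect the principal obstacle to be the sharp form of rationality --- that the denominator is exactly $(1-t)(1-qt)$ rather than the weaker $(1-t^\delta)(1-(qt)^\delta)$ that emerges from a naive $\delta$-periodic grouping. Pinning down this sharper denominator is essentially where the content of the proposition is concentrated; it requires the symmetric use of Riemann--Roch encapsulated in the functional equation, together with an appeal to the fact that $\mathbb{F}_q$ is the full constant field of $\mathbb{F}_q(Y)$.
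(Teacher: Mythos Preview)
The paper does not itself prove this proposition; it cites \cite[4.1.10]{Niederreiter/Xing:2009} and only remarks that it follows from ``the Riemann--Roch theorem combined with the study of the zeta function.'' Your plan follows precisely this route --- Schmidt's classical argument --- so in spirit it matches what the paper indicates.

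That said, the step you flag as the principal obstacle is not resolved by the functional equation in the way you suggest. The relation $Z(t)=q^{g-1}t^{2g-2}Z(1/(qt))$ merely exchanges the two factors $1-t^\delta$ and $1-(qt)^\delta$ of the naive denominator; it is fully compatible with the pole set $\{\zeta,\ \zeta/q:\zeta^\delta=1\}$ and does not by itself collapse anything to $(1-t)(1-qt)$. (A hypothetical genus-$0$ curve with $\delta=2$ would have $Z(t)=(1+qt^2)/\bigl((1-t^2)(1-q^2t^2)\bigr)$, and one checks that this satisfies the functional equation.) The full-constant-field hypothesis enters through a different mechanism: one passes to the constant extension $K_\delta:=K\cdot\mathbb{F}_{q^\delta}$, which is a field precisely because $\mathbb{F}_q$ is the full constant field. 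Every place of $K$ of degree $d$ (necessarily $\delta\mid d$) splits there into $\delta$ places of degree $d/\delta$, so $\delta_{K_\delta}=1$ and hence $Z_{K_\delta}(u)$ has a \emph{simple} pole at $u=1$. The Euler-product identity $Z_{K_\delta}(t^\delta)=\prod_{\zeta^\delta=1}Z_K(\zeta t)=Z_K(t)^\delta$ then forces the pole of $Z_K(t)^\delta$ at $t=1$ to be simple, which is impossible for $\delta>1$ since $Z_K$ already has a nonzero residue there. With this step supplied, your concluding contradiction argument becomes superfluous --- $\delta=1$ has already dropped out.
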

Note that this Weil divisor of degree $1$ need not be effective. In fact,
for $p \geq 5$ prime the non-singular projective curve over $\mathbb{F}_p$ given by $x^{p-1} +
y^{p-1} = 3z^{p-1}$ does not admit any $\mathbb{F}_p$-rational point and,
thus, there cannot exist an effective Weil divisor of degree $1$ of this
curve.

\begin{definition} \label{vectorbundle}
A {\bf vector bundle} of rank $r$ over a curve $Y/k$ is a variety $E/k$ 
together with a morphism $\pi : E \to Y$ such that there exists an open 
covering $\{ U_i \}$ of $Y$ and isomorphisms 
$\phi_i : \pi^{-1}(U_i) \to U_i \times A_r(k)$ (where $A_r(k)$ denotes the 
affine space over $k$ of dimension $r$) such that for each pair $U_i$, $U_j$ 
the composition ${\phi_j {\phi_i}^{-1}}_{|U_i \cap U_j}$ equals $(\mathrm{id}, \phi_{i,j})$ for a linear map $\phi_{i,j}$.
A {\bf section} of a vector bundle $\pi : E \to Y$ over an open set $U \subset Y$ is a map $s : U \to E$ such that $\pi \circ s = \mathrm{id}$.
\end{definition}

\section{Geometry of numbers}

\begin{introduction}
In this section I describe Weil's geometry of numbers. For further reading 
the sources 
\cite[2]{Weil:1982}, \cite[VI]{Weil:1995} are 
highly recommended.
\end{introduction}

\newparagraph \label{defnorm}
Let $Y/\mathbb{F}_q$ be a non-singular projective curve, let $P$ be an 
$\mathbb{F}_q$-closed point of $Y$, and let $\nu_P$ be the valuation of 
$K = \mathbb{F}_q(Y)$ discussed in \ref{valuation}. Then 
$$|\cdot|_P : K \to \mathbb{R} : x \mapsto (q^{\mathrm{deg}(P)})^{-\nu_P(x)}$$
defines an absolute value on $K$. The completion of $K$ with respect to this 
absolute value equals $K_P$ (cf.~\ref{valuation}), which is locally compact as the zero neighbourhood $\hat{\mathcal{O}}_{P,K}$ is an inverse limit of finite rings (cf.~\ref{localring}), whence compact. On each field $K_P$ define the canonical Haar measure
to be the one with respect to which $\hat{\mathcal{O}}_{P,K}$ has volume $1$.

\begin{theorem}[\cite{Artin/Whaples:1945}] \label{productformula}
Let $Y/\mathbb{F}_q$ be a non-singular projective curve. Then each $0 \neq x \in K$ satisfies $|x|_P = 1$ for almost all $P \in Y^\circ$. Moreover, for each $0 \neq x \in K$, $$\prod_{P \in Y^\circ} |x|_P = 1.$$
\end{theorem}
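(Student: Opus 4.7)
The plan is to reduce both claims to facts already established about principal divisors, since the absolute value $|\cdot|_P$ is essentially built out of $\nu_P$ and the degree of the closed point $P$.

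For the first claim, I would invoke the finiteness of zeros and poles that was already quoted right after Definition \ref{divisor}: for $0 \neq x \in K$, the set $\{P \in Y^\circ \mid \nu_P(x) \neq 0\}$ is finite, so $\mathrm{div}(x)$ really is a Weil divisor. Since $|x|_P = 1$ is equivalent to $\nu_P(x) = 0$, finiteness of the support of $\mathrm{div}(x)$ gives $|x|_P = 1$ for almost all $P \in Y^\circ$.

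For the product formula, I would unwind the definition of $|\cdot|_P$ and rewrite the product as a power of $q$:
\begin{eqnarray*}
\prod_{P \in Y^\circ} |x|_P &=& \prod_{P \in Y^\circ} \bigl(q^{\mathrm{deg}(P)}\bigr)^{-\nu_P(x)} \;=\; q^{-\sum_{P \in Y^\circ} \nu_P(x)\, \mathrm{deg}(P)} \;=\; q^{-\mathrm{deg}(\mathrm{div}(x))},
\end{eqnarray*}
where the product (indexed by $Y^\circ$) only has finitely many non-trivial factors by the first claim. Now $\mathrm{div}(x)$ is principal, and the text explicitly records after Definition \ref{divisor} that principal divisors have degree $0$ (with citation to \cite[3.4.3]{Niederreiter/Xing:2009}). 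Hence $\prod_P |x|_P = q^0 = 1$.

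The only genuine content is therefore the degree-zero statement for principal divisors, which has already been taken as known here; everything else is a rewriting of the definition of $|\cdot|_P$ in terms of $\nu_P$ and $\deg(P)$. So there is no real obstacle beyond careful bookkeeping of the exponent $\sum_P \nu_P(x)\deg(P) = \deg(\mathrm{div}(x))$.
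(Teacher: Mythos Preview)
Your argument is correct: the first claim follows from the finiteness of zeros and poles already recorded after Definition~\ref{divisor}, and the product formula reduces immediately to $\deg(\mathrm{div}(x))=0$, which the paper also records there. The paper itself gives no proof of this theorem, only the citation to Artin--Whaples, so there is no in-text argument to compare against; your derivation from the ingredients already assembled in Section~3 is exactly the intended unpacking.
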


\begin{definition} \label{adeles}
In analogy to \ref{incompleteadeles} define the {\bf ad\`ele ring} $$\hat{\mathbb{A}}_K := \{ (x_P)_{P \in Y^{\circ}} \in \prod_{P \in Y^{\circ}} K_P \mid x_P \in \hat{\mathcal{O}}_P \mbox{ for almost all $P\in Y^\circ$ } \} .$$
It is a locally compact ring and contains $K$ embedded diagonally as a discrete subring (cf.\ \ref{productformula}). Define $$|\cdot| : \hat{\mathbb{A}}_K \to \mathbb{R} : x \mapsto \prod_{P \in Y^{\circ}} |x|_P.$$ The product measure of the canonical Haar measures on each individual $K_P$ (\ref{defnorm}) yields a Haar measure on the locally compact group 
$(\hat{\mathbb{A}}_K,+)$. Let $\omega_{\hat{\mathbb{A}}_K}$ denote the 
corresponding volume form. For any divisor $D \in \mathrm{Div}(Y)$, define 
$\hat{\mathbb{A}}_K(D) := \{ x \in \hat{\mathbb{A}}_K \mid \nu_P(x_P) + 
\nu_P(D) \geq 0 \mbox{ for all $P \in Y^\circ$} \}$. Note that 
$\hat{\mathcal{O}}_K := \hat{\mathbb{A}}_K(0)$ is the maximal compact subring 
of $\hat{\mathbb{A}}_K$. It has volume $1$ with respect to the chosen Haar
measure.
\end{definition}

\newparagraph \label{sadeles}
For a finite subset $S \subset Y^\circ$ define the {\bf ring of $S$-ad\`eles} 
$$\hat{\mathbb{A}}_S := \prod_{P \in S} K_P \times \prod_{P \in Y^\circ 
\backslash S} \hat{\mathcal{O}}_{P,K}$$ (cf.~\ref{valuation}). One has 
$\hat{\mathbb{A}}_K = \lim_\rightarrow \hat{\mathbb{A}}_S$. For $K$ embedded 
diagonally in $\hat{\mathbb{A}}_K$ as a discrete subring (cf.~\ref{adeles}) define the 
{\bf ring of $S$-integers} $$\mathcal{O}_S := K \cap \hat{\mathbb{A}}_S = 
\bigcap_{P \in Y^\circ \backslash S} (K \cap \hat{\mathcal{O}}_{P,K})
\stackrel{\ref{valuation}}{=}
\bigcap_{P \in Y^\circ \backslash S} \mathcal{O}_{P,K} 
\stackrel{\ref{localring}}{=} \mathcal{O}_{Y^\circ \backslash S, K};$$ these are the $k$-rational functions of $Y$ which are regular outside $S$.
The field $K$ embeds diagonally in $\prod_{P \in S} K_P$ as a dense
subring and, hence, also in $\mathbb{A}_S$. Define 
$$|\cdot|_S : \prod_{P \in S} K_P \to \mathbb{R} : x \mapsto \prod_{P \in S} 
|x|_P.$$

\medskip
\newparagraph \label{serre}
Let $G$ be a unimodular locally compact group, let $\Gamma$ be a discrete 
subgroup of $G$, let $H$ be a compact open subgroup of $G$, let $\mu$ be a 
Haar measure on $G$, and assume the double coset space $X \cong H \backslash G / \Gamma$, considered as a system of 
representatives of the $\Gamma$-orbits on $H\backslash G$, is countable. Note that $\mu$ is also a Haar measure on $H$, since $H$ is open, and hence of positive volume.  Then, cf.~\cite[p.~84]{Serre:2003}, 
$$\int_{G/\Gamma} d\mu = \sum_{x \in X} \left(\int_{G_{x}/\Gamma_{x}} 
d\mu\right) = \sum_{x\in X} \frac{\int_{G_{x}} d\mu}{|\Gamma_{x}|} 
= \sum_{x\in X} \frac{\int_{H} d\mu}{|\Gamma_{x}|} 
= \int_{H} d\mu \sum_{x\in X} \frac{1}{|\Gamma_{x}|}.$$
Here, for choices of $x_0 \in X$ and $(g^x_{x_0})_{x \in X} \in G$ with
$g^x_{x_0}(x_0)=x$, the map $\bigsqcup_{x \in X} G_x/\Gamma_x \to G/\Gamma : g\Gamma_x
\mapsto g^{x}_{x_0}g\Gamma$ is an isomorphism of orbit spaces, where $G_x$
and $\Gamma_x$ denote the stabilizers of $x$ in the respective group.

\medskip
\newparagraph\label{12}
One has 
\begin{eqnarray}
& \hat{\mathbb{A}}_K(D) \backslash \hat{\mathbb{A}}_K/ K = \hat{\mathbb{A}}_K/ \hat{\mathbb{A}}_K(D) + K \cong \mathbb{A}_K / \mathbb{A}_K(D) + K \stackrel{\ref{omegaadel}}{\cong} \Omega_K(D) \stackrel{\ref{lomega}}{\cong} L(W-D), & \label{1}\\ 
& K \cap \hat{\mathbb{A}}_K(D) = K \cap \{ x \in \hat{\mathbb{A}}_K \mid \nu_P(x) + \nu_P(D) \geq 0 \mbox{ for all $P \in Y^\circ$} \} = L(D). & \label{2}
\end{eqnarray}
The observation that the intersection of the lattice $K$ with the compactum $\hat{\mathbb{A}}_K(D)$ equals $L(D)$ is a classical theme in the geometry of numbers, cf.\ \cite[p.~387]{Armitage:1967}.

\begin{proposition}[{\cite[2.1.3]{Weil:1982}, \cite[p.~100]{Weil:1995}}] \label{mu}
Using the notation of \ref{adeles}, one has $$\int_{\hat{\mathbb{A}}_K/K} \omega_{\hat{\mathbb{A}}_K} = q^{g-1}.$$
\end{proposition}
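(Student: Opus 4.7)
The plan is to compute the covolume of $K$ in $\hat{\mathbb{A}}_K$ by introducing an auxiliary divisor $D$, decomposing $\hat{\mathbb{A}}_K/K$ into the image of the compactum $\hat{\mathbb{A}}_K(D)$ and a finite cokernel, and then closing the calculation by invoking the Riemann--Roch theorem.

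First I would compute $\mathrm{vol}(\hat{\mathbb{A}}_K(D))$ for an arbitrary divisor $D = \sum_{P} n_P P$. At each place $P$, the local factor $\{x_P \in K_P \mid \nu_P(x_P) \geq -n_P\}$ is obtained from $\hat{\mathcal{O}}_{P,K}$ by multiplication by $\pi_P^{-n_P}$ for a local uniformizer $\pi_P$, hence has Haar volume $|\pi_P^{-n_P}|_P = q^{n_P\,\mathrm{deg}(P)}$ by the definition of the absolute value in \ref{defnorm} and the scaling behaviour of Haar measure. Since only finitely many factors differ from $\hat{\mathcal{O}}_{P,K}$ (which has volume $1$), the product measure yields
$$\mathrm{vol}(\hat{\mathbb{A}}_K(D)) = \prod_{P \in Y^\circ} q^{n_P\, \mathrm{deg}(P)} = q^{\mathrm{deg}(D)}.$$

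Next, by (\ref{2}) one has $\hat{\mathbb{A}}_K(D) \cap K = L(D)$, which is a finite $\mathbb{F}_q$-vector space of cardinality $q^{\dim_{\mathbb{F}_q} L(D)}$. The short exact sequence
$$0 \longrightarrow L(D) \longrightarrow \hat{\mathbb{A}}_K(D) \longrightarrow (\hat{\mathbb{A}}_K(D) + K)/K \longrightarrow 0$$
of locally compact abelian groups (with $L(D)$ carrying counting measure) then gives $\mathrm{vol}\bigl((\hat{\mathbb{A}}_K(D) + K)/K\bigr) = q^{\mathrm{deg}(D) - \dim_{\mathbb{F}_q} L(D)}$. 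By the chain of isomorphisms (\ref{1}), the finite cokernel $\hat{\mathbb{A}}_K/(\hat{\mathbb{A}}_K(D) + K)$ is isomorphic to $L(W-D)$ as an $\mathbb{F}_q$-vector space and hence has cardinality $q^{\dim_{\mathbb{F}_q} L(W-D)}$. Combining the two contributions and invoking Theorem~\ref{RiemannRoch} gives
$$\int_{\hat{\mathbb{A}}_K/K} \omega_{\hat{\mathbb{A}}_K} = q^{\dim_{\mathbb{F}_q} L(W-D)} \cdot q^{\mathrm{deg}(D) - \dim_{\mathbb{F}_q} L(D)} = q^{g-1}.$$

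The main obstacle lies in the first step: verifying that the canonical Haar measure on $K_P$ fixed in \ref{defnorm} transforms under multiplication by a local uniformizer by the factor $q^{\mathrm{deg}(P)}$, so that the product of local volumes telescopes cleanly to $q^{\mathrm{deg}(D)}$. Once that normalization is settled, the rest is bookkeeping on top of the isomorphisms already assembled in (\ref{1})--(\ref{2}) together with the Riemann--Roch identity.
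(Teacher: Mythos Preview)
Your argument is correct, but it inverts the logical order the paper adopts. The paper specializes to $D=0$ from the outset: then $\hat{\mathbb{A}}_K(0)=\hat{\mathcal{O}}_K$ has volume $1$ by the chosen normalization, so no local computation is needed, and Serre's formula \ref{serre} immediately gives $\int_{\hat{\mathbb{A}}_K/K}\omega_{\hat{\mathbb{A}}_K} = |L(W)|/|L(0)| = q^g/q$. Note that this uses only the \emph{definition} of the genus (\ref{genus}) together with $L(0)=\mathbb{F}_q$, not the full Riemann--Roch identity. By contrast, you first establish $\mathrm{vol}(\hat{\mathbb{A}}_K(D))=q^{\mathrm{deg}(D)}$ via the local module computation --- which is precisely the content of the \emph{subsequent} Proposition~\ref{mu2} in the paper --- and then invoke Riemann--Roch to collapse $\mathrm{deg}(D)-\dim L(D)+\dim L(W-D)$ to $g-1$. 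Your route is self-contained and makes the local normalization explicit; the paper's route is shorter here (no local calculation, no Riemann--Roch) but defers the local volume formula to \ref{mu2}, where it is deduced from \ref{mu} and Riemann--Roch rather than computed directly. Either order works; the paper's choice just separates the measure-theoretic normalization from the Riemann--Roch input more cleanly.
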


\begin{proof}
For the compact open subgroup $\hat{\mathbb{A}}_K(0)$ of $\hat{\mathbb{A}}_K$ Serre's formula (cf.\ \ref{serre}) yields
\begin{equation}
\int_{\hat{\mathbb{A}}_K/K} \omega_{\hat{\mathbb{A}}_K} = \int_{\hat{\mathbb{A}}_K(0)} \omega_{\hat{\mathbb{A}}_K} \sum_{\hat{\mathbb{A}}_K(0) \backslash \hat{\mathbb{A}}_K/ K} \frac{1}{|K \cap \hat{\mathbb{A}}_K(0)|} = \frac{|\hat{\mathbb{A}}_K(0) \backslash \hat{\mathbb{A}}_K/ K|}{|K \cap \hat{\mathbb{A}}_K(0)|} \int_{\hat{\mathbb{A}}_K(0)} \omega_{\hat{\mathbb{A}}_K}. \label{applicationofserre}
\end{equation}
Therefore, as $\int_{\hat{\mathbb{A}}_K(0)} \omega_{\hat{\mathbb{A}}_K} = 1$
(cf.~\ref{adeles}), the Riemann--Roch Theorem (cf.~\ref{RiemannRoch}) or, in fact, its underlying definitions and observations \ref{genus} and \ref{12} imply $\int_{\hat{\mathbb{A}}_K/K} \omega_{\hat{\mathbb{A}}_K} = q^{g-1}$.
\end{proof}

\begin{proposition}[{\cite[p.~39]{Harder:1969}, \cite[p.~98]{Weil:1995}}] \label{mu2}
One has $$\int_{\hat{\mathbb{A}}_K(D)} \omega_{\hat{\mathbb{A}}_K} = q^{\mathrm{deg}(D)}.$$
\end{proposition}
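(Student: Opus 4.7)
The plan is to exploit the product structure of $\hat{\mathbb{A}}_K(D)$ and reduce the claim to a local volume computation at each place $P$, using the fact that only finitely many local factors differ from $\hat{\mathcal{O}}_{P,K}$.

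First I would observe that, by the very definition of $\hat{\mathbb{A}}_K(D)$, one has the product decomposition
\[
\hat{\mathbb{A}}_K(D) = \prod_{P \in Y^\circ} \pi_P^{-\nu_P(D)}\,\hat{\mathcal{O}}_{P,K},
\]
where $\pi_P$ denotes a uniformizer for $\hat{\mathcal{O}}_{P,K}$. Since $\nu_P(D)=0$ for all but finitely many $P$ (cf.\ \ref{divisor}), almost every factor equals $\hat{\mathcal{O}}_{P,K}$, which has local Haar volume $1$ by the normalization chosen in \ref{defnorm}. The set therefore is a well-defined measurable subset of $\hat{\mathbb{A}}_K$, and by the definition of the product Haar measure in \ref{adeles} its total volume is the (finite) product of the local volumes.

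Next I would compute the local factor at an individual place $P$. Let $\mu_P$ be the canonical Haar measure on $K_P$, normalized so that $\mu_P(\hat{\mathcal{O}}_{P,K})=1$. Since multiplication by $x \in K_P^\times$ scales $\mu_P$ by the module $|x|_P$, and since $|\pi_P|_P = q^{-\deg(P)}$ by \ref{defnorm}, we obtain
\[
\mu_P\bigl(\pi_P^{-\nu_P(D)}\,\hat{\mathcal{O}}_{P,K}\bigr) = |\pi_P|_P^{-\nu_P(D)} = q^{\deg(P)\,\nu_P(D)}.
\]
Equivalently, when $\nu_P(D)\geq 0$ this is the index $[\pi_P^{-\nu_P(D)}\hat{\mathcal{O}}_{P,K} : \hat{\mathcal{O}}_{P,K}] = (\#\hat{\mathcal{O}}_{P,K}/\hat{\mathfrak{m}}_{P,K})^{\nu_P(D)} = q^{\deg(P)\,\nu_P(D)}$, while for $\nu_P(D)<0$ it is the reciprocal of the analogous index; either way the same formula holds.

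Finally I would assemble the product and invoke the definition of $\deg(D)$:
\[
\int_{\hat{\mathbb{A}}_K(D)} \omega_{\hat{\mathbb{A}}_K} \;=\; \prod_{P \in Y^\circ} q^{\deg(P)\,\nu_P(D)} \;=\; q^{\sum_{P \in Y^\circ} \nu_P(D)\,\deg(P)} \;=\; q^{\deg(D)},
\]
as claimed. There is no serious obstacle here: the only delicate point is the local scaling calculation, which depends on the identification of the normalization of $|\cdot|_P$ in \ref{defnorm} with the modulus of the additive Haar measure on $K_P$, and this is a standard property of Haar measures on nonarchimedean local fields.
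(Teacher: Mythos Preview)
Your proof is correct but follows a genuinely different route from the paper's. You argue locally: factor $\hat{\mathbb{A}}_K(D)$ as $\prod_P \pi_P^{-\nu_P(D)}\hat{\mathcal{O}}_{P,K}$, compute each local volume via the modulus identity $\mu_P(a\,\hat{\mathcal{O}}_{P,K})=|a|_P$, and multiply. This is elementary and uses nothing beyond the normalization in \ref{defnorm} and the definition of $\deg(D)$. The paper instead argues globally: it invokes \ref{mu} to get $\int_{\hat{\mathbb{A}}_K/K}\omega_{\hat{\mathbb{A}}_K}=q^{g-1}$, applies Serre's formula \ref{serre} with the compact open subgroup $\hat{\mathbb{A}}_K(D)$, identifies the resulting double coset and intersection with $L(W-D)$ and $L(D)$ via \ref{12}, and then appeals to the Riemann--Roch Theorem \ref{RiemannRoch} to solve for the volume. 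Your argument is shorter and shows that the statement is really a local scaling fact; the paper's argument, while more roundabout, is in keeping with the survey's theme of exhibiting the Riemann--Roch theorem as the mechanism behind adelic volume identities, and it illustrates concretely how $q^{g-1}$, $|L(D)|$, $|L(W-D)|$, and $q^{\deg(D)}$ are tied together.
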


\begin{proof}
The strategy is to first use \ref{mu} and then apply \ref{serre} to the compact open subgroup $\hat{\mathbb{A}}_K(D)$ of $\hat{\mathbb{A}}_K$ in analogy to (\ref{applicationofserre}) in the proof of \ref{mu}. This way one computes
\begin{eqnarray*}
q^{g-1} & \stackrel{\ref{mu}}{=} & \int_{\hat{\mathbb{A}}_K/K} \omega_{\hat{\mathbb{A}}_K} \\
& \stackrel{\ref{serre}}{=} & \frac{|\hat{\mathbb{A}}_K(D) \backslash \hat{\mathbb{A}}_K/ K|}{|K \cap \hat{\mathbb{A}}_K(D)|} \int_{\hat{\mathbb{A}}_K(D)} \omega_{\hat{\mathbb{A}}_K} \\
& \stackrel{\ref{12}}{=} & \frac{|L(W-D)|}{|L(D)|} \int_{\hat{\mathbb{A}}_K(D)} \omega_{\hat{\mathbb{A}}_K}.
\end{eqnarray*}
Hence, by the Riemann--Roch Theorem (cf.~\ref{RiemannRoch}), one has $\int_{\hat{\mathbb{A}}_K(D)} \omega_{\hat{\mathbb{A}}_K}= q^{\mathrm{deg}(D)}.$
\end{proof}

\section{Curves over finite fields, considered as schemes}

\begin{introduction}
In this section I return to the study of non-singular projective curves, in
the more general context of schemes. For further reading 
the sources 
\cite[II, III, IV]{Hartshorne:1977}, \cite[2, 6, 7]{Liu:2002} are 
highly recommended.
\end{introduction}

The basic idea of Harder's reduction theory is to apply the geometry of 
numbers to the groups of $\hat{\mathbb{A}}_K$-rational points of reductive $K$-isotropic algebraic 
$K$-groups, see \ref{GeometryOfNumbers} below. The concept of schemes, which 
generalizes the concept of varieties, allows one to do so in a very 
efficient way, as in this language one can consider reductive groups over
projective curves, thus making the Riemann--Roch theorem applicable. 

\begin{definition} \label{stalk}
Let $X$ be a topological space, let $\mathfrak{Top}(X)$ be the category of 
open subsets of $X$ with the inclusion maps as morphisms, and let 
$\mathfrak{C}$ be one of the categories $\mathfrak{Ab}$ of abelian groups, 
$\mathfrak{CRing}$ of commutative rings with $1$, or $\mathfrak{Mod}(A)$ of modules over the (commutative) ring $A$. A contravariant functor $\mathcal{F}$ from $\mathfrak{Top}(X)$ to $\mathfrak{C}$ which satisfies $\mathcal{F}(\emptyset) = 0$ is called a {\bf presheaf} on $X$ with values in $\mathfrak{C}$. For presheaves $\mathcal{F}$, $\mathcal{G}$ on $X$ with values in $\mathfrak{C}$, a natural transformation from $\mathcal{F}$ to $\mathcal{G}$ is called a {\bf morphism} of presheaves.

A presheaf $\mathcal{F}$ is called a {\bf sheaf}, if it satisfies the following conditions: (a) if $U$ is an open subset of $X$, if $\{ V_i \}$ is an open covering of $U$, and if $s \in \mathcal{F}(U)$ satisfies $s_{|V_i} = 0$ for all $i$, then $s=0$; and (b) if $U$ is an open subset of $X$, if $\{ V_i \}$ is an open covering of $U$, and if the $s_i \in \mathcal{F}(V_i)$ satisfy ${s_i}_{|V_i \cap V_j} = {s_j}_{|V_i \cap V_j}$ for all $i$, $j$, then there exists $s \in \mathcal{F}(U)$ such that $s_{|V_i} = s_i$ for all $i$.

For a presheaf $\mathcal{F}$ on $X$ and an element $x \in X$, the 
{\bf stalk} $\mathcal{F}_x$ is defined as the direct limit 
$\lim_\rightarrow \mathcal{F}(U)$ over the open neighbourhoods $U$ of $x$ 
where $U \leq V$ if $U \supseteq V$, i.e., $\mathcal{F}_x$ consists of the 
germs of elements of $\mathcal{F}(U)$ at $x$.
\end{definition}

\begin{example} \label{curveasscheme}
Let $V$ be a quasi-projective variety. The functor $\mathcal{O}_V : U \mapsto \mathcal{O}_U(V)$ (cf.\ \ref{localring}) is a sheaf on $V$. The stalk $\mathcal{O}_{V,P}$ at a point $P$ equals the local ring $\mathcal{O}_P$ at that point.  
\end{example}

\begin{definition} \label{sheafification}
Let $X$ be a topological space and let $\mathcal{F}$ be a presheaf on $X$. The {\bf sheaf associated to the presheaf $\mathcal{F}$}, also called the {\bf sheafification of $\mathcal{F}$}, is a pair $(\mathcal{F}^\dagger,\theta)$ consisting of a sheaf $\mathcal{F}^\dagger$ and a morphism $\theta : \mathcal{F} \to \mathcal{F}^\dagger$ of presheaves that satisfies the following universal property: for every morphism $\alpha : \mathcal{F} \to \mathcal{G}$, where $\mathcal{G}$ is a sheaf, there exists a unique morphism $\tilde{\alpha} : \mathcal{F}^\dagger \to \mathcal{G}$ such that $\alpha = \tilde{\alpha} \circ \theta$.   
\end{definition}
The sheafification of a presheaf is unique for abstract reasons and exists by \cite[2.2.15]{Liu:2002}; see also \cite[3.3]{Harder:2008}, \cite[II.1.2]{Hartshorne:1977}, \cite[Ex.\ 2.2.3]{Liu:2002}.

\begin{definition} \label{quotient}
A subfunctor $\mathcal{F}'$ of a sheaf $\mathcal{F}$ that is itself a sheaf is called a {\bf subsheaf} of $\mathcal{F}$. If $\mathcal{F}$ takes values in $\mathfrak{Ab}$, then the {\bf quotient sheaf} $\mathcal{F}/\mathcal{F}'$ is the sheaf associated to the presheaf $U \mapsto \mathcal{F}(U)/\mathcal{F}'(U)$
(cf.~\ref{sheafification}). If $\mathcal{F}$ is a sheaf on $X$ then, given an open cover $\{ U_i \}$ of $X$, an element of $\mathcal{F}/\mathcal{F}'(X)$ can be described by elements $f_i \in \mathcal{F}(U_i)$ such that $\frac{f_i}{f_j} \in \mathcal{F}'(U_i \cap U_j)$ for all $i$, $j$.
\end{definition}

\begin{definition}\label{directimage}
Let $f : X \to Y$ be a continous map between topological spaces, let $\mathcal{F}$ be a sheaf on $X$, and let $\mathcal{G}$ be a sheaf on $Y$. Then $Y \supseteq U \mapsto \mathcal{F}(f^{-1}(U))$ defines a sheaf $f_*\mathcal{F}$ on $Y$, the {\bf direct image sheaf}. For each $x \in X$ there is a natural map $\epsilon_x : (f_*\mathcal{F})_{f(x)} \to \mathcal{F}_{x}$. The {\bf inverse image sheaf} $f^{-1}\mathcal{G}$ on $X$ is the sheaf associated to the presheaf $X \supseteq U \mapsto \lim_{V \supseteq f(U)} \mathcal{G}(V)$ where the limit is taken over all open subsets $V$ of $Y$ that contain $f(U)$; cf.\ \ref{sheafification}. For each $x \in X$ one has $(f^{-1}\mathcal{G})_x = \mathcal{G}_{f(x)}$ (\cite[p.~37]{Liu:2002}).
\end{definition}

\begin{definition}
A {\bf ringed space} is a pair $(X,\mathcal{O}_X)$ consisting of a 
topological space $X$ and a sheaf of rings $\mathcal{O}_X$, i.e., a sheaf on
$X$ with values in $\mathfrak{CRing}$. It is a 
{\bf locally ringed space}, if for each point $P \in X$, the stalk 
$\mathcal{O}_{X,P}$ is a local ring.
\end{definition}

\begin{example}
Let $A$ be a commutative ring, define the set $\mathrm{Spec}(A)$ to be the set of prime ideals of $A$, and define a topology on $\mathrm{Spec}(A)$ by taking the closed sets to be sets of the type $V(\mathfrak{a}) = \{ \mathfrak{p} \in \mathrm{Spec}(A) \mid \mathfrak{a} \subseteq \mathfrak{p} \}$, for arbitrary ideals $\mathfrak{a}$ of $A$; cf.\ \cite[II.2.1]{Hartshorne:1977}, \cite[2.1.1]{Liu:2002}. 
For an open set $U \subseteq \mathrm{Spec}(A)$ define $\mathcal{O}(U)$ to be the set of functions $s : U \to \bigsqcup_{\mathfrak{p} \in U} A_\mathfrak{p}$, where $A_\mathfrak{p}$ denotes the localization of $A$ at $\mathfrak{p}$, such that $s(\mathfrak{p}) \in A_\mathfrak{p}$ and for each $\mathfrak{p} \in U$ there exist a neighbourhood $V \subseteq U$ of $\mathfrak{p}$ and elements $a, f \in A$ with $f \not\in \mathfrak{q}$ and $s(\mathfrak{q}) = \frac{a}{f}$ in $A_\mathfrak{q}$ for each $\mathfrak{q} \in V$. The resulting locally ringed space $\mathrm{Spec} A = (\mathrm{Spec}(A),\mathcal{O})$ is called the {\bf spectrum} of $A$; cf.\ \cite[p.~70]{Hartshorne:1977}, \cite[2.3.2]{Liu:2002}.
\end{example}

\begin{definition} \label{morphring}
A {\bf morphism of locally ringed spaces} $(f,f^\sharp) : (X,\mathcal{O}_X) \to (Y,\mathcal{O}_Y)$ consists of a continuous map $f : X \to Y$ and a morphism of sheaves of rings $f^\sharp : \mathcal{O}_Y \to f_* \mathcal{O}_X$ such that for every $x \in X$ the induced map $\mathcal{O}_{Y,f(x)} \stackrel{f^\sharp}{\to} (f_*\mathcal{O}_X)_{f(x)} \stackrel{\epsilon_x}{\to} \mathcal{O}_{X,x}$ is a local homomorphism (cf.\ \ref{directimage}). An {\bf isomorphism} is an invertible morphism. A morphism $(f,f^\sharp) : (Z,\mathcal{O}_Z) \to (X,\mathcal{O}_X)$ of ringed spaces is a {\bf closed immersion} if $f$ is a topological closed immersion and if each $f^\sharp_x$ is surjective. 
\end{definition}

\begin{definition} \label{oxmodule}
Let $(X,\mathcal{O}_X)$ be a ringed space. An {\bf $\mathcal{O}_X$-module} is a sheaf $\mathcal{F}$ on $X$ with values in $\mathfrak{Ab}$ such that for each open subset $U \subseteq X$ the group $\mathcal{F}(U)$ is an $\mathcal{O}_X(U)$-module and, for each inclusion of open sets $V \subseteq U$, the group homomorphism $\mathcal{F}(U) \to \mathcal{F}(V)$ is compatible with the module structures via the ring homomorphism $\mathcal{O}_X(U) \to \mathcal{O}_X(V)$.

If $U$ is an open subset of $X$, and if $\mathcal{F}$ is an $\mathcal{O}_X$-module, then $\mathcal{F}_{|U}$ is an $\mathcal{O}_{X|U}$-module. If $\mathcal{F}$ and $\mathcal{G}$ are two $\mathcal{O}_X$-modules, the group of morphisms from $\mathcal{F}$ to $\mathcal{G}$ is denoted by $\mathrm{Hom}_{\mathcal{O}_X}(\mathcal{F},\mathcal{G})$. The presheaf $U \mapsto \mathrm{Hom}_{\mathcal{O}_{X|U}}(\mathcal{F}_{|U}, \mathcal{G}_{|U})$ is a sheaf, denoted by $\mathcal{H}om_{\mathcal{O}_X}(\mathcal{F},\mathcal{G})$; it is also an $\mathcal{O}_X$-module.

The {\bf tensor product} $\mathcal{F} \otimes_{\mathcal{O}_X} \mathcal{G}$ of 
two $\mathcal{O}_X$-modules is defined as the sheaf associated to the presheaf $U \mapsto \mathcal{F}(U) \otimes_{\mathcal{O}_X(U)} \mathcal{G}(U)$ (cf.\
\ref{sheafification}).

An $\mathcal{O}_X$-module $\mathcal{F}$ is {\bf free} if it is isomorphic 
to a direct sum of copies of $\mathcal{O}_X$. It is {\bf locally free} if 
there exists an open cover $\{ U_i \}$ of $X$ for which each $\mathcal{F}_{|U_i}$ is a free $\mathcal{O}_X$-module. In that case the {\bf rank} of $\mathcal{F}$ on such an open set is the number of copies of the structure sheaf $\mathcal{O}_X$ needed. For $X$ irreducible, thus, the notion of rank of a locally free sheaf is well defined. A locally free $\mathcal{O}_X$-module of rank $1$ is called an {\bf invertible $\mathcal{O}_X$-module}.
\end{definition}

\begin{definition}
For any ringed space $(X,\mathcal{O}_X)$, define the {\bf Picard group} $\mathrm{Pic(X)}$ of $X$ to be the group of isomorphism classes of invertible $\mathcal{O}_X$-modules under the operation $\otimes_{\mathcal{O}_X}$, cf.\ \cite[II.6.12]{Hartshorne:1977}. The inverse of an invertible $\mathcal{O}_X$-module $\mathcal{L}$ is $\mathcal{L}^\vee := \mathcal{H}om_{\mathcal{O}_X}(\mathcal{L},\mathcal{O}_X)$, cf.\ \cite[Ex.~II.5.1]{Hartshorne:1977}, \cite[Ex.~5.1.12]{Liu:2002}.
\end{definition}

\begin{definition}
An {\bf affine scheme} is a locally ringed space $(X,\mathcal{O}_X)$ which is isomorphic to the spectrum of some ring. A locally ringed space $(X,\mathcal{O}_X)$ is a {\bf scheme}, if every point of $X$ has an open neighbourhood $U$ such that $(U,\mathcal{O}_{X|U})$ is an affine scheme. Its {\bf dimension} is the dimension of $X$ as a topological space
(cf.\ \ref{begin}) and it is called {\bf irreducible}, if $X$ is irreducible.
\end{definition}

\begin{definition} \label{closedsub}
A {\bf morphism} of schemes is a morphism of locally ringed spaces. Likewise, an {\bf isomorphism} of schemes is an isomorphism of locally ringed spaces. A {\bf closed subscheme} $(Z,\mathcal{O}_Z)$ of a scheme $(X,\mathcal{O}_X)$ consists of a closed subset $Z$ of $X$ and a closed immersion $(j,j^\sharp) (Z,\mathcal{O}_Z) \to (X,\mathcal{O}_X)$ where $j$ is the canonical injection. (Cf.\ \ref{morphring}.)
\end{definition}

\begin{definition}\label{sscheme}
Let $S$ be a scheme. An {\bf $S$-scheme} or a {\bf scheme over $S$} is a scheme $X$ endowed with a morphism of schemes $\pi : X \to S$. If $S = \mathrm{Spec} A$ for a ring $A$, then an $S$-scheme is also called an {\bf $A$-scheme}.
\end{definition}

\begin{example}\label{morphismglobalfield}
Let $(X,\mathcal{O}_X)$ be a scheme, for $x \in X$ let $\mathcal{O}_{X,x}$
be the stalk at $x$ (cf.\ \ref{stalk}) and let $\mathfrak{m}_x$ be its
maximal ideal. The {\bf residue field} of $x$ on $X$ is $k(x) :=
\mathcal{O}_{X,x}/\mathfrak{m}_x$. For any field $K$, by
\cite[Ex.~II.2.7]{Hartshorne:1977}, there exists a one-to-one correspondence
between morphisms from the
spectrum of $K$ to $(X,\mathcal{O}_X)$ and pairs of a point 
$x \in X$ and an inclusion $k(x) \to K$. In other words, any spectrum of a
field that contains the residue field of some point $x \in X$ as a subfield
can be considered as an $X$-scheme.
\end{example}

\begin{definition}\label{fibredproduct}
Let $S$ be a scheme and let $X$, $Y$ be $S$-schemes with respect to the
morphisms $\pi_X : X \to S$ and $\pi_Y : Y \to S$. The {\bf fibred
product} $X \times_S Y$ of $X$ and $Y$ over $S$ is defined to be an 
$S$-scheme together with morphisms $p_X : X \times_S Y \to X$ and 
$p_Y : X \times_S Y \to Y$ satisfying $\pi_X \circ p_X = \pi_Y \circ p_Y$, 
such that given any $S$-scheme $Z$ and morphisms $f : Z \to X$ and 
$g : Z \to Y$ satisfying $\pi_X \circ f = \pi_Y \circ g$, then there exists
a unique morphism $\theta : Z \to X \times_S Y$ satisfying $f = p_X \circ
\theta$ and $g = p_Y \circ \theta$.
$$\xymatrix{
Z \ar@{-->}[rrr]^\theta \ar@/^/[drr]_f \ar@/^/[drrrr]^g & & & X \times_S Y 
\ar[dl]^{p_X} \ar[dr]^{p_Y}& \\
& & X \ar[dr]^{\pi_X} & & Y \ar[dl]_{\pi_Y} \\
& & & S &
} 
$$
By \cite[II.3.3]{Hartshorne:1977}, \cite[3.1.2]{Liu:2002} the fibred product of two $S$-schemes exists and is unique
up to unique isomorphism.
\end{definition}

\begin{example} 
Let $A$ be a ring, let $B = \bigoplus_{d \geq 0} B_d$ be a graded $A$-algebra, and let $B_+ := \bigoplus_{d > 0} B_d$. Define the set $\mathrm{Proj}(B)$ to be the set of homogeneous prime ideals of $B$ which do not contain $B_+$, and define a topology on $\mathrm{Proj}(B)$ by taking the closed sets to be sets of the
form $V(\mathfrak{a}) = \{ \mathfrak{p} \in \mathrm{Proj}(B) \mid \mathfrak{a} \subseteq \mathfrak{p} \}$, for arbitrary homogeneous ideals $\mathfrak{a}$ of $A$; cf.\ \cite[II.2.4]{Hartshorne:1977}, \cite[p.~50]{Liu:2002}. 
For each $\mathfrak{p} \in \mathrm{Proj}(B)$ let $B_\mathfrak{p}$ be the ring of elements of degree zero in the localized ring $T^{-1}B$, where $T$ is the multiplicative system of all homogeneous elements of $B$ which are not in $\mathfrak{p}$.
For an open set $U \subseteq \mathrm{Proj}(B)$ define $\mathcal{O}(U)$ to be the set of functions $s : U \to \bigsqcup_{\mathfrak{p} \in U} B_\mathfrak{p}$ such that $s(\mathfrak{p}) \in B_\mathfrak{p}$ and for each $\mathfrak{p} \in U$ there exist a neighbourhood $V \subseteq U$ of $\mathfrak{p}$ and homogeneous elements $a, f \in B$ of the same degree with $f \not\in \mathfrak{q}$ for each $\mathfrak{q} \in V$ and $s(\mathfrak{q}) = a/f$ in $B_\mathfrak{q}$. The resulting ringed space $\mathrm{Proj} B = (\mathrm{Proj}(B),\mathcal{O})$ is a scheme, in fact an $A$-scheme, cf.\ \cite[II.2.5]{Hartshorne:1977}, \cite[2.3.38]{Liu:2002}.
\end{example}

\begin{example}\label{pna}
Let $A$ be a ring and let $B = A[x_0,x_1,...,x_n]$ with grading by degree. Then the scheme $\mathbb{P}_n(A) := \mathrm{Proj} B$ is the {\bf projective $n$-space over $A$}.
\end{example}

\newparagraph \label{catschemes}\label{duality}
The schemes together with their morphisms form a category $\mathfrak{Sch}$. 
It contains the category $\mathfrak{AffSch}$ of affine schemes as a 
subcategory. The functor $\mathrm{Spec}$ from $\mathfrak{CRing}$ to 
$\mathfrak{AffSch}$ that sends $A$ to $\mathrm{Spec} A$ and a ring 
homomorphism $A \to B$ to the corresponding morphism of ringed spaces from 
the spectrum of $B$ to the spectrum of $A$ 
(cf.~\cite[II.2.3]{Hartshorne:1977}) is a contravariant equivalence (also 
called a duality) between the category of commutative rings and the category 
of affine schemes. The global section functor, i.e., the functor from
$\mathfrak{AffSch}$ to $\mathfrak{CRing}$ that sends 
$(\mathrm{Spec}(A),\mathcal{O})$ to $\mathcal{O}(\mathrm{Spec}(A))$, is an
inverse functor to $\mathrm{Spec}$; cf.\ 
\cite[II.2.2, II.2.3]{Hartshorne:1977}.  

\medskip
\newparagraph \label{finalobject}
The affine scheme $\mathrm{Spec} \mathbb{Z}$ is a final
object in both categories of schemes and affine schemes; cf.\ 
\cite[Ex.~II.2.5]{Hartshorne:1977}.

\begin{definition}\label{curvesch}\label{nonsingular2}
Let $k$ be a perfect field. A {\bf projective scheme over $k$} is a scheme over $k$ that is isomorphic to a closed subscheme of the scheme $\mathbb{P}_n(k)$, for some $n$ (cf.\ \ref{closedsub}, \ref{pna}).  
A {\bf projective curve over $k$} is an irreducible projective scheme of dimension $1$.
A projective curve $Y$ is called {\bf non-singular} at $P \in Y$, if $\mathcal{O}_P$ (cf.\ \ref{stalk}) is a discrete valuation ring (cf.\ \cite[4.2.9]{Liu:2002}). A projective curve is called {\bf non-singular}, if it is non-singular at each of its points. 
\end{definition}

\newparagraph \label{vectormodule}
Comparing \ref{nonsingular} with \ref{nonsingular2} it is clear that a 
non-singular projective curve, defined as a variety, yields a non-singular
projective curve if considered as a scheme. 
In the context of \ref{curveasscheme} one can convert locally free $\mathcal{O}_Y$-modules (\ref{oxmodule}) into vector bundles over $Y$ (\ref{vectorbundle}) and vice versa, cf.\ \cite[p.~61]{Harder:2008}, \cite[Ex.\ II.5.18]{Hartshorne:1977}, \cite{Serre:1955}:
a vector bundle $\pi : E \to Y$ yields a sheaf $\mathcal{L}_E$ by sending an open subset $U$ of $Y$ to the (fibre-wise) $\mathcal{O}_Y(U)$-module of sections $U \to E$ over $U$.

\medskip
\newparagraph
 Let $A$ be a ring. The quotient of $A[x_0,x_1,...,x_n]$ by a homogeneous ideal  is a {\bf homogeneous $A$-algebra}. By \cite[2.3.41]{Liu:2002}, for every homogeneous $A$-algebra $B$, the scheme $\mathrm{Proj} B$ is a projective scheme and, by \cite[5.1.30]{Liu:2002}, conversely every projective scheme over $A$ is isomorphic to $\mathrm{Proj} B$, for some homogeneous $A$-algebra $B$.  

\begin{definition}
Let $X$ be a scheme. For each open subset $U$, let $S(U)$ denote the set of elements $s \in \mathcal{O}_X(U)$ such that for each $x \in U$ the germ $s_x$ is not a zero divisor in the stalk $\mathcal{O}_x$. Then the {\bf sheaf $\mathcal{K}$ of total quotient rings} of the sheaf $\mathcal{O}_X$ is the sheaf associated to the presheaf $U \mapsto S(U)^{-1}\mathcal{O}_X(U)$ (cf.\ 
\ref{sheafification}). Let $\mathcal{K}^*$ and $\mathcal{O}^*$ denote the sheaves of groups of invertible elements in the sheaves of rings $\mathcal{K}$, resp.\ $\mathcal{O}_X$. 
An element of the group $(\mathcal{K}^*/\mathcal{O}^*)(X)$ is called a {\bf Cartier divisor}. A Cartier divisor is {\bf principal} if it is in the image of the natural map $\mathcal{K}^*(X) \to (\mathcal{K}^*/\mathcal{O}^*)(X)$. Two Cartier divisors are {\bf linearly equivalent} if their difference is principal.
\end{definition}

\begin{proposition}[{\cite[II.6.11]{Hartshorne:1977}, \cite[7.2.16]{Liu:2002}}] \label{weilcartier}
Let $Y$ be a non-singular projective curve over $\mathbb{F}_q$. Then there 
exists a natural isomorphism between the group $\mathrm{Div}(Y)$ of Weil 
divisors (cf.~\ref{divisor}) and the group $\mathcal{K}^*/\mathcal{O}^*(Y)$ of Cartier divisors and, furthermore, the principal Weil divisors correspond to the principal Cartier divisors under this isomorphism.
\end{proposition}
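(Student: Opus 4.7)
The plan is to construct mutually inverse group homomorphisms $\phi : \mathrm{Div}(Y) \to (\mathcal{K}^*/\mathcal{O}^*)(Y)$ and $\psi$ in the opposite direction, and then verify that principal divisors correspond. Throughout I exploit that $Y$ is integral, so that $\mathcal{K}$ is the constant sheaf attached to the function field $K = \mathbb{F}_q(Y)$, and every section of $\mathcal{K}^*$ over a non-empty open set is simply an element of $K^*$.

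The easier direction is $\psi$. A Cartier divisor is represented by local data $\{(U_i, f_i)\}_{i \in I}$ with $f_i \in K^*$ satisfying $f_i/f_j \in \mathcal{O}^*(U_i \cap U_j)$, cf.\ \ref{quotient}. For every $\mathbb{F}_q$-closed point $P$, I pick any $i$ with $P \in U_i$ and set $n_P := \nu_P(f_i)$; the cocycle condition forces $\nu_P(f_i) = \nu_P(f_j)$ whenever $P \in U_i \cap U_j$, so $n_P$ is well-defined. Setting $\psi(\{(U_i,f_i)\}) := \sum_P n_P \cdot P$ yields a genuine element of $\mathrm{Div}(Y)$ because $Y$ is quasi-compact (allowing reduction to a finite subcover) and each $f_i \in K^*$ admits only finitely many zeros and poles on $Y$, cf.\ the discussion following \ref{divisor}.

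The harder direction is $\phi$, where non-singularity becomes essential. Given $D = \sum_j n_j P_j$ with finite support $\{P_1, \ldots, P_r\}$, the DVR structure of $\mathcal{O}_{P_j}$ supplied by \ref{nonsingular2} produces a uniformizer $t_j \in K^*$ with $\nu_{P_j}(t_j) = 1$. Viewed as an element of $K^*$, each $t_j$ has only finitely many zeros and poles on $Y$. I would take affine opens $U_j$ around $P_j$ small enough that $U_j \cap \mathrm{supp}(D) = \{P_j\}$ and that every $t_k$ with $k \neq j$ is a unit in $\mathcal{O}_Q$ for every $Q \in U_j$, together with $U_0 := Y \setminus \mathrm{supp}(D)$. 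On $U_j$ set $f_j := t_j^{n_j}$ and on $U_0$ set $f_0 := 1$. By construction all ratios $f_j/f_k$ lie in $\mathcal{O}^*$ on overlaps, so this data defines an element of $(\mathcal{K}^*/\mathcal{O}^*)(Y)$.

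The main obstacle is precisely this gluing argument: simultaneously shrinking the affine opens so that all $t_k^{n_k}$ have valuation zero at every unintended point, and checking independence of the choice of uniformizers (two uniformizers at $P_j$ differ by an element of $\mathcal{O}^*_{P_j}$, so their $n_j$-th powers agree modulo $\mathcal{O}^*$). Once $\phi$ is in hand, verifying that $\phi$ and $\psi$ are mutually inverse group homomorphisms reduces to a point-by-point valuation computation: $\psi(\phi(D))$ recovers $D$ because $\nu_{P_j}(t_j^{n_j}) = n_j$, while $\phi(\psi(\{(U_i,f_i)\}))$ produces local data with the same valuations as the original, hence differs by a global unit. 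Compatibility with principality is then transparent: a principal Weil divisor $\mathrm{div}(x)$ with $x \in K^*$ corresponds to the Cartier divisor with global datum $(Y,x)$, which is exactly the image of $x \in \mathcal{K}^*(Y)$ in $(\mathcal{K}^*/\mathcal{O}^*)(Y)$; conversely every principal Cartier divisor lifts to a global $f \in K^*$ whose image under $\psi$ is $\mathrm{div}(f)$.
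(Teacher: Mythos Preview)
The paper does not supply its own proof of this proposition; it simply cites Hartshorne and Liu. The paragraph immediately following the statement (\ref{concreteweil}) unpacks only the map from Cartier divisors to Weil divisors --- exactly your $\psi$ --- as an explanatory remark, not as a proof. Your plan is the standard argument from the cited sources and is correct in outline. One small simplification: in constructing $\phi$ you do not need each $t_k$ with $k\neq j$ to be a unit on all of $U_j$; it suffices to shrink each $U_j$ so that $t_j$ itself has no zeros or poles on $U_j$ other than $P_j$, since then on any overlap $U_j\cap U_k$ (which contains neither $P_j$ nor $P_k$) both $t_j$ and $t_k$ are automatically units.
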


\newparagraph \label{concreteweil}
In fact, as $\mathcal{O}_Y$ is integral (\cite[2.4.17]{Liu:2002}), the 
sheaf $\mathcal{K}$ is the constant sheaf corresponding to the function 
field $K$ of $Y$. As in \ref{quotient}, a Cartier divisor is given by a 
family $\{ U_i, f_i\}$ where $\{U_i\}$ is an open cover of $Y$ and 
$f_i \in \mathcal{K}^*(U_i)= K^*$. For each $P \in Y$ and for all $i$, $j$
such that $P \in U_i, U_j$ one has $\nu_P(f_i) = \nu_P(f_j)$, as
$\frac{f_i}{f_j}$ is invertible on $U_i \cap U_j$. 
One therefore obtains the well-defined Weil divisor 
$\sum_{P \in Y^\circ} \nu_P(f_i)P$. 

\begin{definition}
Let $D$ be a Cartier divisor on a non-singular projective curve $Y/\mathbb{F}_q$ represented by a family $\{ U_i, f_i\}$ where $\{U_i\}$ is an open cover of $Y$ and $f_i \in \mathcal{K}^*(U_i)$. The {\bf sheaf $\mathcal{L}(D)$ associated to $D$} is the subsheaf of $\mathcal{K}$ given by taking $\mathcal{L}(D)$ to be the sub-$\mathcal{O}_Y$-module of $\mathcal{K}$ generated by $f_i^{-1}$ on $U_i$. Since $\frac{f_i}{f_j}$ is invertible on $U_i \cap U_j$, the elements $f_i^{-1}$ and $f_j^{-1}$ generate the same $\mathcal{O}_Y$-module, and hence $\mathcal{L}(D)$ is well defined.
\end{definition}

\begin{proposition}[{\cite[II.6.13, II.6.15]{Hartshorne:1977}, \cite[7.1.19]{Liu:2002}}] \label{cartierinvert}
Let $Y$ be a non-singular projective curve over $\mathbb{F}_q$. Then for any Cartier divisor $D$, the sheaf $\mathcal{L}(D)$ is an invertible $\mathcal{O}_Y$-module. Moreover, the map $D \mapsto \mathcal{L}(D)$ induces a surjection from the group of Cartier divisors onto the Picard group $\mathrm{Pic}(Y)$. The kernel of this surjection is the group of principal Cartier divisors. 
\end{proposition}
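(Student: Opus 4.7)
The plan is to prove the three assertions in sequence: invertibility of $\mathcal{L}(D)$; that $D \mapsto \mathcal{L}(D)$ is a group homomorphism onto $\mathrm{Pic}(Y)$; and the identification of its kernel with the principal Cartier divisors.

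Invertibility and the homomorphism property can be read off directly from the construction. If $D$ is represented by $\{U_i, f_i\}$ with $f_i \in \mathcal{K}^*(U_i)$, then $\mathcal{L}(D)|_{U_i} = f_i^{-1}\,\mathcal{O}_Y|_{U_i}$ as a sub-$\mathcal{O}_Y|_{U_i}$-module of $\mathcal{K}|_{U_i}$, and multiplication by $f_i$ gives an isomorphism $\mathcal{L}(D)|_{U_i} \cong \mathcal{O}_Y|_{U_i}$, proving that $\mathcal{L}(D)$ is locally free of rank $1$. After passing to a common refinement on which $D_1$ and $D_2$ are represented by $\{U_i, f_i\}$ and $\{U_i, g_i\}$, the sum $D_1 + D_2$ is represented by $\{U_i, f_i g_i\}$; the local assignment $f_i^{-1} \otimes g_i^{-1} \mapsto (f_i g_i)^{-1}$ then glues to an isomorphism $\mathcal{L}(D_1) \otimes_{\mathcal{O}_Y} \mathcal{L}(D_2) \cong \mathcal{L}(D_1 + D_2)$.

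Surjectivity is the main obstacle. Given an invertible $\mathcal{O}_Y$-module $\mathcal{L}$, I would exploit the remark from \ref{concreteweil} that, since $Y$ is integral, $\mathcal{K}$ is the constant sheaf with value the function field $K$. Choose local trivialisations $\phi_i : \mathcal{O}_Y|_{U_i} \to \mathcal{L}|_{U_i}$. Tensoring with $\mathcal{K}$ makes $\mathcal{L} \otimes_{\mathcal{O}_Y} \mathcal{K}$ locally free of rank $1$ over the constant sheaf $\mathcal{K}$, hence globally isomorphic to $\mathcal{K}$; torsion-freeness of an invertible sheaf on an integral scheme makes the canonical map $\mathcal{L} \to \mathcal{L} \otimes_{\mathcal{O}_Y} \mathcal{K}$ injective, so $\mathcal{L}$ embeds into $\mathcal{K}$. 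Under this embedding the images $s_i := \phi_i(1) \in \mathcal{K}^*(U_i)$ yield a Cartier datum $\{U_i, s_i^{-1}\}$, and by construction $\mathcal{L}(D) \cong \mathcal{L}$.

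Finally, for the kernel: a principal Cartier divisor is represented globally by a single $f \in \mathcal{K}^*(Y) = K^*$, so $\mathcal{L}(D) = f^{-1}\mathcal{O}_Y \cong \mathcal{O}_Y$. Conversely, if $\mathcal{L}(D) \cong \mathcal{O}_Y$ for $D = \{U_i, f_i\}$, then the image of $1 \in \mathcal{O}_Y(Y)$ under this isomorphism, viewed inside $\mathcal{K}(Y) = K$, is a global section $s \in K^*$ which on each $U_i$ generates the same $\mathcal{O}_Y|_{U_i}$-submodule of $\mathcal{K}|_{U_i}$ as $f_i^{-1}$; hence $s f_i \in \mathcal{O}_Y^*(U_i)$, and the Cartier divisor $\{U_i, s^{-1}\}$ equals $D$, showing that $D$ is principal.
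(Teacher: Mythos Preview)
Your argument is correct and follows the standard route---essentially that of Hartshorne II.6.13 and II.6.15, which are exactly the references the paper cites. Note, however, that the paper itself does not supply a proof of this proposition: it is stated with citations to \cite{Hartshorne:1977} and \cite{Liu:2002} and no proof environment follows. So there is no in-paper argument to compare against; your write-up matches the treatment in the cited sources.
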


\begin{definition} \label{c}
Let $Y/\mathbb{F}_q$ be a non-singular projective curve. Define the {\bf degree} $c(\mathcal{L})$ of an invertible $\mathcal{O}_Y$-module $\mathcal{\mathcal{L}}$ to be the degree $\mathrm{deg}(D)$ of the corresponding Weil divisor; cf.\ \ref{weilcartier}, \ref{concreteweil}, \ref{cartierinvert}; \cite[7.3.1, 7.3.2]{Liu:2002}.
\end{definition}

\begin{example}
Let $Y = \mathbb{P}_1 = \{ (x:y) \}$ and let $U_1 = \{ (x:y) \mid y \neq 0 \}$
and $U_2 = \{ (x:y) \mid x \neq 0 \}$ be an open cover of $Y$ with local 
coordinates $s = \frac{x}{y}$ and $t = \frac{y}{x}$. For $d \in \mathbb{N}$ 
define the line bundle $L(d) = \{ ((x:y),(a,b)) \in \mathbb{P}_1 \times A_2 
\mid x^db = y^da \}$ with canonical projection $\pi : L(d) \to \mathbb{P}_1$. Then 
\begin{eqnarray*}
U_1 \times A_1 & \stackrel{f_1}{\cong} &  \pi^{-1}(U_1) = \{ ((x:y),(a,b)) \in U_1 \times A_2 \mid a = s^db \} \\
(s,b) & \mapsto & ((s:1),(s^db,b)) \\
 U_2 \times A_1 & \stackrel{f_2}{\cong} & \pi^{-1}(U_2) = \{ ((x:y),(a,b)) \in U_1 \times A_2 \mid b = t^da \} \\
(t,a) & \mapsto & ((1:t),(a,t^da)).
\end{eqnarray*}
One has ${f_1}_{|U_1 \cap U_2} = s^d {f_2}_{|U_1 \cap U_2}$, so that the line bundle $L(d)$ corresponds to the Weil divisor $\sum_{P \in Y^\circ} \nu_P(f_i)P$ of degree $d$ (cf.~\ref{concreteweil}).
\end{example}

\begin{theorem}[Riemann--Roch Theorem, {\cite[IV.1.3]{Hartshorne:1977}, \cite[7.3.26]{Liu:2002}}] \label{RiemannRoch2}
Let $Y/\mathbb{F}_q$ be a non-singular projective curve of genus $g$. Then for any invertible $\mathcal{O}_Y$-module $\mathcal{L}$ $$\mathrm{dim}_{\mathbb{F}_q}(H^0(Y,\mathcal{L})) - \mathrm{dim}_{\mathbb{F}_q}(H^1(Y,\mathcal{L})) = c(\mathcal{L}) + 1 - g.$$
\end{theorem}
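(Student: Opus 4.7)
The plan is to reduce Theorem \ref{RiemannRoch2} to the divisor-theoretic Riemann--Roch theorem (Theorem \ref{RiemannRoch}) already established in the excerpt, by translating both sides of the identity from sheaf cohomology into the language of divisors and Weil differentials.

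First, I would use Proposition \ref{cartierinvert} to choose a Cartier divisor $D$ with $\mathcal{L} \cong \mathcal{L}(D)$, and then invoke Proposition \ref{weilcartier} to identify $D$ with a Weil divisor in $\mathrm{Div}(Y)$. By Definition \ref{c} this identification is exactly what makes $c(\mathcal{L}) = \mathrm{deg}(D)$, so the right-hand side of the claimed equality already reads $\mathrm{deg}(D) + 1 - g$, matching the right-hand side of Theorem \ref{RiemannRoch}. It therefore suffices to exhibit natural isomorphisms
\begin{equation*}
H^0(Y,\mathcal{L}(D)) \;\cong\; L(D), \qquad H^1(Y,\mathcal{L}(D)) \;\cong\; L(W-D)
\end{equation*}
of $\mathbb{F}_q$-vector spaces.

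For the $H^0$ statement I would argue directly from the local description in \ref{concreteweil}: a global section of $\mathcal{L}(D)$ is, under the embedding $\mathcal{L}(D) \hookrightarrow \mathcal{K}$, exactly a rational function $x \in K$ whose local orders satisfy $\nu_P(x) + \nu_P(D) \geq 0$ at every closed point $P$, which is the defining condition of $L(D)$. For the $H^1$ statement I would compute Čech cohomology on the curve $Y$ using the cover by $U = Y \setminus \mathrm{supp}(D)$ together with small affine neighbourhoods of the points of $\mathrm{supp}(D)$, and then take an inverse limit of local contributions, identifying the Čech $1$-cocycles modulo coboundaries with the quotient $\mathbb{A}_K/(\mathbb{A}_K(D) + K)$. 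Observations \ref{omegaadel} and \ref{lomega} then give
\begin{equation*}
H^1(Y,\mathcal{L}(D)) \;\cong\; \mathbb{A}_K/\bigl(\mathbb{A}_K(D) + K\bigr) \;\cong\; \Omega_K(D) \;\cong\; L(W-D).
\end{equation*}
Combining the two isomorphisms with Theorem \ref{RiemannRoch} yields the claim.

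The main obstacle is the second isomorphism: one must justify replacing the completed adèles $\hat{\mathbb{A}}_K$ appearing in \ref{adeles} by the repartitions $\mathbb{A}_K$ of \ref{incompleteadeles} at the level of the quotient $\mathbb{A}_K/(\mathbb{A}_K(D)+K)$, and then argue that this quotient really does compute Čech $H^1$. In practice this rests on strong approximation (density of $K$ in the finite product $\prod_{P\in S}K_P$ appearing in \ref{sadeles}) and on the standard fact that on a curve Čech cohomology computed with any affine cover agrees with sheaf cohomology; I would simply cite \cite[II.6]{Hartshorne:1977} or \cite[7.3]{Liu:2002} for these two ingredients rather than redevelop them. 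Once those are granted, the proof is essentially a matter of matching definitions.
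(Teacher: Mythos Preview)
The paper does not prove Theorem~\ref{RiemannRoch2}; it is stated with citations to \cite[IV.1.3]{Hartshorne:1977} and \cite[7.3.26]{Liu:2002} and no proof environment follows. (The same is true of the divisor-theoretic version, Theorem~\ref{RiemannRoch}.) So there is no proof in the paper to compare your proposal against.

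That said, your strategy of reducing \ref{RiemannRoch2} to \ref{RiemannRoch} via the identifications $H^0(Y,\mathcal{L}(D)) \cong L(D)$ and $H^1(Y,\mathcal{L}(D)) \cong \mathbb{A}_K/(\mathbb{A}_K(D)+K)$ is the standard one and is correct in outline. One small correction: your worry about passing from $\hat{\mathbb{A}}_K$ to $\mathbb{A}_K$ is misplaced, since Observations~\ref{omegaadel} and~\ref{lomega} are already stated for the ring of repartitions $\mathbb{A}_K$, and the \v{C}ech computation naturally produces repartitions (the sheaf $\mathcal{K}$ is the constant sheaf $K$, not its completions); no approximation argument is needed there. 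The genuine work is the identification of \v{C}ech $H^1$ with the repartition quotient, which, as you say, is best cited rather than reproved.
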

Here $H^i(Y,\mathcal{L})$ denotes sheaf cohomology groups as defined in 
\cite[III]{Hartshorne:1977}, \cite[5.2]{Liu:2002}.

\medskip
\newparagraph \label{RiemannRoch3}
For a locally free $\mathcal{O}_Y$-module $\mathcal{E}$ of rank $n$ define $c(\mathcal{E}) := \sum_{i=1}^n c(\mathcal{E}_i/\mathcal{E}_{i-1})$ where $0 \subset \mathcal{E}_1 \subset \cdots \subset \mathcal{E}_{n-1} \subset \mathcal{E}_n = \mathcal{E}$ is a filtration such that each $\mathcal{E}_i/\mathcal{E}_{i-1}$ is invertible; cf.\ \cite[2.1]{Grothendieck:1957}, \cite[p.~122]{Harder:1968}. Then \ref{RiemannRoch2} implies
$\mathrm{dim}_{\mathbb{F}_q}(H^0(Y,\mathcal{E})) - \mathrm{dim}_{\mathbb{F}_q}(H^1(Y,\mathcal{E})) = c(\mathcal{E}) + (1 - g)n$.
(Cf.\ \cite[p.~99]{Weil:1995}.)

\section{Reduction theory for rationally trivial group schemes} \label{reductiontheoryrtgs}

\begin{introduction}
In this section I describe the heart of Harder's reduction theory based on
\cite{Harder:1968}. From this section on I will assume the reader has a basic intuition for
linear algebraic groups and feels comfortable with some of the standard 
terminology such as that of Borel subgroups and parabolic subgroups. For
introductory reading the sources \cite{Borel:1991} and \cite{Humphreys:1975},
for further reading the sources
\cite{Jantzen:2003}, \cite{Springer:1998}, \cite{Voskresenski:1998}, 
\cite{Waterhouse:1979} are highly recommended. I also recommend \cite[Appendix E]{Laumon:1997}.
\end{introduction}

\begin{definition}
An object $X$ of a category $\mathfrak{C}$ is called a {\bf group object}, 
if for each object $Y$ of $\mathfrak{C}$ the set 
$\mathrm{Hom}_\mathfrak{C}(Y,X)$ of morphisms from $Y$ to $X$ is a group and the
correspondence $Y \mapsto \mathrm{Hom}_\mathfrak{C}(Y,X)$ is a 
(contravariant) functor from the category $\mathfrak{C}$ to the category 
$\mathfrak{Gr}$ of groups; cf.\ \cite[p.~3]{Voskresenski:1998}. 
\end{definition}

\begin{definition}
A group object in the categories $\mathfrak{Sch}$ of schemes, resp.\ 
$\mathfrak{AffSch}$ of affines schemes (cf.\ \ref{catschemes}) is called a
{\bf group scheme}, resp.\ an {\bf affine group scheme}.
\end{definition}

\newparagraph\label{affinegroupschemeisgroupscheme}
As the categories $\mathfrak{Sch}$ and $\mathfrak{AffSch}$ both have $\mathrm{Spec} \mathbb{Z}$ as a final object (cf.\ \ref{finalobject}) and
have finite products, being an (affine) group scheme is an intrinsic
property; cf.\ \cite[p.~3]{Voskresenski:1998}. In particular, any affine group scheme is a group scheme.  

\medskip
\newparagraph \label{duality2}
When restricting (and co-restricting accordingly) the functor $\mathrm{Spec}$ 
from $\mathfrak{CRing}$ to $\mathfrak{AffSch}$ (cf.~\ref{catschemes}) to the 
subcategory of commutative Hopf algebras, it yields a contravariant 
equivalence (duality) to the category of affine group schemes; cf.\ \cite[1.3]{Voskresenski:1998}, \cite[p.~9]{Waterhouse:1979}.

\begin{example}
Let $\mathrm{G}_\mathrm{a}$ be the covariant functor on the category $\mathfrak{CRing}$
defined by $\mathrm{G}_\mathrm{a}(A) = (A,+)$. Since by means of duality (cf.\
\ref{duality})
$$\mathrm{Hom}_{\mathfrak{CRing}}(\mathbb{Z}[T],A) = A \quad \Longleftrightarrow 
\quad \mathrm{Hom}_{\mathfrak{AffSch}}(\mathrm{Spec}A, \mathrm{Spec}\mathbb{Z}[T]) =
A,$$ the functor $G_\mathrm{a}$, considered as the contravariant functor
on the category $\mathfrak{AffSch}$ that sends
$(\mathrm{Spec}(A),\mathcal{O})$ to $(\mathcal{O}(\mathrm{Spec}(A)),+)$, is 
represented by $\mathrm{Spec}\mathbb{Z}[T]$. Therefore, $\mathrm{Spec}\mathbb{Z}[T]$ is an affine group scheme. By 
\ref{affinegroupschemeisgroupscheme}, it is also a group scheme, which as a
functor sends
$(X,\mathcal{O}_X)$ to $(\mathcal{O}_X(X),+)$. 
\end{example}

\begin{example}
Let $\mathrm{G}_\mathrm{m}$ be the covariant functor on the category $\mathfrak{CRing}$
defined by $\mathrm{G}_\mathrm{m}(A) = (A^*,\cdot)$. Since
$$\mathrm{Hom}_{\mathfrak{CRing}}(\mathbb{Z}[T,T^{-1}],A) = A^* \quad 
\Longleftrightarrow 
\quad \mathrm{Hom}_{\mathfrak{AffSch}}(\mathrm{Spec}A, 
\mathrm{Spec}\mathbb{Z}[T,T^{-1}]) =
A^*,$$ the functor $\mathrm{G}_\mathrm{m}$, considered as a contravariant functor
on the category $\mathfrak{AffSch}$, is represented by
$\mathrm{Spec}\mathbb{Z}[T,T^{-1}]$. Therefore, $\mathrm{Spec}\mathbb{Z}[T,T^{-1}]$ is an affine group scheme and, by 
\ref{affinegroupschemeisgroupscheme}, also a group scheme. 
\end{example}

\begin{example}
\label{gln}
Let $\mathrm{GL}_n$ be the covariant functor on the category $\mathfrak{CRing}$
defined by $$A \mapsto \mathrm{GL}_n(A) = \{ M \in A^{n \times n} \mid \mathrm{det}(M)
\neq 0 \}.$$ It is represented by the affine scheme $\mathrm{GL}_n = \mathrm{Spec}
\mathbb{Z}[T_{11}, ..., T_{nn}, \mathrm{det}(T_{ij})^{-1}]$ and, hence, is
an affine group scheme. By
\ref{affinegroupschemeisgroupscheme}, $\mathrm{GL}_n$ is also a group scheme, and
induces the contravariant functor on $\mathfrak{Sch}$ that sends 
$(X,\mathcal{O}_X)$ to $\mathrm{GL}_n(\mathcal{O}_X(X))$.

An alternative, basis-free way of defining $\mathrm{GL}_n$ is to define a 
contravariant functor on $\mathfrak{Sch}$ that sends 
$$(X,\mathcal{O}_X) \quad \mbox{to} \quad
\mathrm{Aut}_{\mathcal{O}_X(X)}\left(\bigoplus_{i=1}^n
\mathcal{O}_X(X)\right).$$
\end{example}

\begin{example} \label{sln}
Let $\mathrm{SL}_n$ be the covariant functor on the category $\mathfrak{CRing}$
defined by $$A \mapsto \mathrm{ker}\left(\mathrm{GL}_n(A) \to
\mathrm{G}_{\mathrm{m}}(A) \right) = \{ M \in A^{n \times n} \mid \mathrm{det}(M)
= 1 \}.$$ By Yoneda's lemma (\cite[p.~6]{Waterhouse:1979}) the natural 
transformation $\mathrm{GL}_n \to
\mathrm{G}_{\mathrm{m}}$ can be described by a homomorphism 
$\mathbb{Z}[T,T^{-1}] \to 
\mathbb{Z}[T_{11}, ..., T_{nn}, \mathrm{det}(T_{ij})^{-1}]$, the one that
sends $T$ to $\mathrm{det}(T_{ij})$. Therefore $\mathrm{SL}_n$ is
represented by $$\mathbb{Z}[T_{11}, ..., T_{nn}, \mathrm{det}(T_{ij})^{-1}]
\otimes_{\mathbb{Z}[T,T^{-1}]} \mathbb{Z} = 
\mathbb{Z}[T_{11}, ..., T_{nn}]/(\mathrm{det}(T_{ij})-1).$$ In other words,
$\mathrm{Spec} 
\mathbb{Z}[T_{11}, ..., T_{nn}]/(\mathrm{det}(T_{ij})-1)$ is an (affine)
group scheme.
\end{example}

\begin{definition} \label{sgroupscheme}
An $S$-scheme (cf.\ \ref{sscheme}) that is an (affine) group scheme is
called an {\bf (affine) group $S$-scheme}. As the categories of (affine)
$S$-schemes have finite products and have the scheme $S$ as a final object,
as in \ref{affinegroupschemeisgroupscheme} being an (affine) group
$S$-scheme is an intrinsic property. In particular, any affine group
$S$-scheme is a group $S$-scheme.
\end{definition}

\begin{example} \label{slngen}
Generalizing \ref{sln}, let $S$ be a scheme, let $G$ and $H$ be group $S$-schemes, and let 
$f : G \to H$ be a homomorphism of group $S$-schemes, i.e., a morphism of
$S$-schemes from $G$ to $H$ such that for each $S$-scheme $Y$ the induced
map $\mathrm{Hom}_\mathfrak{C}(Y,G) \to \mathrm{Hom}_\mathfrak{C}(Y,H)$ is a
group homomorphism. Define a functor on the category of $S$-schemes by sending
$$Y \quad \mbox{to} \quad 
\mathrm{ker}(\mathrm{Hom}_\mathfrak{C}(Y,G) \to
\mathrm{Hom}_\mathfrak{C}(Y,H)).$$
It is represented by the fibred product $S$-scheme $G \times_{H} S$,
cf.~\ref{fibredproduct}.
\end{example}

\begin{example} \label{sle}
Let $(X,\mathcal{O}_X)$ be an (irreducible) scheme and let $\mathcal{E}$ be
a locally free $\mathcal{O}_X$-module of finite rank (\ref{oxmodule}).
Following \cite[I~4.5]{Demazure/Grothendieck:1970a} define a functor $\mathrm{GL}(\mathcal{E})$ on the category of $X$-schemes that sends
$$(Y,\mathcal{O}_Y) \quad \mbox{to} \quad
\mathrm{Aut}_{\mathcal{O}_Y(Y)}\left(\left(\mathcal{O}_Y
\otimes_{f^{-1}\mathcal{O}_X} f^{-1}\mathcal{E}\right)(Y)\right),$$ 
where $f$ denotes the morphism of schemes from the $X$-scheme 
$(Y,\mathcal{O}_Y)$ to the scheme $(X,\mathcal{O}_X)$ (\ref{sscheme}) and $f^{-1}\mathcal{O}_X$ and $f^{-1}\mathcal{E}$ denote the respective inverse image sheaves (\ref{directimage}).
For $\mathcal{E} = \bigoplus_{i = 1}^n \mathcal{O}_X$ this is just the affine group
$X$-scheme
$\mathrm{GL}_n/X$ from \ref{gln}. The present more general example is obtained from that
example by a twisting process; cf.\ \cite[p.~134]{Harder:1968}, 
\cite[p.~117]{Hartshorne:1977}. In analogy to \ref{sln} define the (affine)
group $X$-scheme $\mathrm{SL}(\mathcal{E})$ as 
$\mathrm{ker}(\mathrm{GL}(\mathcal{E}) \to \mathrm{G}_\mathrm{m}/X)$
(cf.~\ref{slngen}).
\end{example}

\begin{definition}
Let $Y/\mathbb{F}_q$ be a non-singular projective curve (\ref{vectormodule}), let $K =
\mathbb{F}_q(Y)$ be its
field of $\mathbb{F}_q$-rational functions (\ref{rationalfunctions})
considered as the residue field 
of $Y$ at its generic point (\ref{morphismglobalfield} and
\cite[Ex.~II.3.6]{Hartshorne:1977}), let $G/Y$ be an
affine 
group $Y$-scheme (\ref{sgroupscheme}), and consider $\mathrm{Spec}K$ as a
$Y$-scheme via the identity map on $K$ (\ref{morphismglobalfield}). The scheme $G/Y$ is called a {\bf rationally trivial group
($Y$-)scheme} if $G/K := G \times_Y \mathrm{Spec}K$ (\ref{fibredproduct}) is a Chevalley scheme
(\cite[I.8.3.1]{Grothendieck:1960}). It is called {\bf reductive}, if $G/K = G
\times_Y \mathrm{Spec} K$ is reductive
(\cite[XIX~1.6]{Demazure/Grothendieck:1970c}). 
\end{definition}

\begin{proposition}[{\cite[XX~1.3,~1.5,~1.16,~1.17; XXII~5.6.5,~5.9.5; XXVI~1.12,~2.1]{Demazure/Grothendieck:1970c}}] \label{filtration}
Let $Y/\mathbb{F}_q$ be a non-singular projective curve, let $G/Y$ be a 
reductive group $Y$-scheme, and let $P/Y$ be a parabolic subgroup of $G/Y$
(cf.~\cite[XXVI~1.1]{Demazure/Grothendieck:1970c}). Then the unipotent 
radical $R_u(P)$ (cf.~\cite[XIX~1.2]{Demazure/Grothendieck:1970c}) admits a 
filtration $R_u(P) = U_0 \supset U_1 \supset \cdots \supset U_k = \{ e \}$ 
such that each $U_i/U_{i+1}$ is a vector bundle over $Y$. More precisely, if 
$P_\alpha$ denotes the vector bundle over $Y$ corresponding to the root space 
$\mathfrak{g}^\alpha$ (cf.~\cite[XIX~1.10]{Demazure/Grothendieck:1970c}), then $$U_i = \prod_{\alpha \in \Delta^+_P, l(\alpha)>i} P_\alpha \quad \quad \mbox{ and } \quad \quad U_i/U_{i+1} \cong \prod_{\alpha \in \Delta^+_P, l(\alpha)=i+1} P_\alpha.$$ 
\end{proposition}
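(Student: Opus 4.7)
The plan is to apply the structure theory of reductive group schemes from SGA3 and exploit the height filtration on the set of positive roots cut out by $P$. First I would pass (at least locally on $Y$) to a maximal torus $T/Y$ of $G/Y$ contained in $P/Y$ and consider the root datum of $G/Y$ relative to $T/Y$. For each root $\alpha$ the root subgroup $P_\alpha$ is a smooth closed subgroup scheme of $G/Y$ which, as a $Y$-scheme, is isomorphic to the vector bundle attached to the one-dimensional root space $\mathfrak{g}^\alpha$ (this is essentially the content of the cited items from \cite[XXII]{Demazure/Grothendieck:1970c}). By \cite[XXVI~1.12]{Demazure/Grothendieck:1970c} the unipotent radical $R_u(P)$ is generated, in the sense of group $Y$-schemes, by the $P_\alpha$ for $\alpha$ in the positive system $\Delta^+_P$ attached to $P$.

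Next I would introduce the height function $l : \Delta^+_P \to \mathbb{N}$, where $l(\alpha)$ is the sum of the coefficients of $\alpha$ in the simple roots of $\Delta^+_P$ (equivalently, $l$ may be read off from the cocharacter of the centre of a Levi complement defining $P$). Fixing an ordering of $\Delta^+_P$ refining the height, I set
$$U_i := \prod_{\alpha \in \Delta^+_P,\ l(\alpha) > i} P_\alpha$$
with respect to that ordering. The Chevalley commutator relations over $Y$ (\cite[XXII~5.6.5,~5.9.5]{Demazure/Grothendieck:1970c}) assert that $[P_\alpha, P_\beta]$ lies in the product of root subgroups $P_{i\alpha+j\beta}$ over positive integers $i,j$ with $i\alpha+j\beta \in \Delta^+_P$; any such root has height $\geq l(\alpha)+l(\beta)$. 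Two consequences follow: the product defining $U_i$ is a closed subgroup $Y$-scheme of $R_u(P)$ independent of the chosen ordering, and the chain $R_u(P) = U_0 \supset U_1 \supset \cdots \supset U_k = \{e\}$, with $k$ the maximal height in $\Delta^+_P$, consists of normal subgroup $Y$-schemes.

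Finally, the same commutator relations show that $U_i/U_{i+1}$ is abelian, because any two of the generating $P_\alpha$'s with $l(\alpha) = i+1$ commute modulo root groups of strictly larger height, which are absorbed in $U_{i+1}$. Hence $U_i/U_{i+1}$ decomposes as the direct product over $\alpha \in \Delta^+_P$ with $l(\alpha)=i+1$ of the abelian group $Y$-schemes $P_\alpha$, and since each such factor is a vector bundle on $Y$ the finite direct product is a vector bundle as well.

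The hard part, which I would not reprove but simply invoke from \cite[XXII, XXVI]{Demazure/Grothendieck:1970c}, is the rigorous construction of the root subgroups $P_\alpha$ as vector bundles, together with the Chevalley commutator relations in the relative setting over $Y$ rather than over a field, where they are classical. Once these are granted, the proposition reduces to elementary combinatorics on the positive root system $\Delta^+_P$ via the height function $l$.
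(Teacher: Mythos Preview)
The paper does not supply its own proof of this proposition: it is stated with a string of SGA3 references and immediately followed by Example~\ref{borelcorrespondence}, so there is nothing to compare against beyond those citations. Your sketch is a faithful and correct unpacking of precisely those references---root subgroups as line bundles from \cite[XXII]{Demazure/Grothendieck:1970c}, the Chevalley commutator relations from \cite[XXII~5.6.5,~5.9.5]{Demazure/Grothendieck:1970c}, and the generation of $R_u(P)$ by root subgroups from \cite[XXVI~1.12]{Demazure/Grothendieck:1970c}---organized via the height function, which is the standard route and exactly what the paper intends the reader to extract from SGA3.
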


\begin{example} \label{borelcorrespondence}
Let $A_1 \cong \mathcal{O}_Y \cong A_2$, let $\mathcal{E} = A_1 \oplus A_2$ 
and let $\mathrm{SL}(\mathcal{E}) = \mathrm{SL}_2/Y$ be the group $Y$-scheme defined in
\ref{sln}, \ref{sle}. Each of $\mathcal{H}om_{\mathcal{O}_Y}(A_1,A_2)$ and 
$\mathcal{H}om_{\mathcal{O}_Y}(A_2,A_1)$ equals the unipotent 
radical of a Borel subgroup of $\mathrm{SL}(A_1 \oplus A_2)$ and is an
invertible $\mathcal{O}_Y$-module, because 
$\mathcal{H}om_{\mathcal{O}_Y}(A_i,A_j) \cong A_i^\vee \otimes_{\mathcal{O}_Y} 
A_j$. By \ref{vectormodule}, this corresponds to a vector bundle 
(of dimension $1$), confirming \ref{filtration} for $\mathrm{SL}_2$.

More generally, let $\mathcal{E}$ be a locally free $\mathcal{O}_Y$-module 
of rank $2$ with an invertible $\mathcal{O}_Y$-submodule $\mathcal{L}$ such 
that $\mathcal{E}/\mathcal{L}$ is also invertible and let 
$\mathrm{SL}(\mathcal{E})$ be the group
$Y$-scheme defined in \ref{sle}. 
There is a one-to-one correspondence between the Borel subgroups $B$ of $\mathrm{SL}(\mathcal{E})$ and the invertible $\mathcal{O}_Y$-submodules $\mathcal{L}$ such that $\mathcal{E}/\mathcal{L}$ is also invertible, given by $\mathcal{L} \mapsto \mathrm{Stab}_{\mathrm{SL}(\mathcal{E})}(\mathcal{L})$. The unipotent radical $R_u(B)$ is isomorphic to $\mathcal{H}om_{\mathcal{O}_Y}(\mathcal{E}/\mathcal{L},\mathcal{L})$.
For any invertible $\mathcal{O}_Y$-module $\mathcal{H}$, the equality $\mathcal{H}om_{\mathcal{O}_Y}(\mathcal{E}/\mathcal{L} \otimes_{\mathcal{O}_Y} \mathcal{H}, \mathcal{L} \otimes_{\mathcal{O}_Y} \mathcal{H}) = (\mathcal{E}/\mathcal{L} \otimes_{\mathcal{O}_Y} \mathcal{H})^\vee \otimes_{\mathcal{O}_Y} (\mathcal{L} \otimes_{\mathcal{O}_Y} \mathcal{H}) = \mathcal{H}om_{\mathcal{O}_Y}(\mathcal{E}/\mathcal{L},\mathcal{L})$ implies $\mathrm{SL}(\mathcal{E} \otimes_{\mathcal{O}_Y} \mathcal{H}) \cong \mathrm{SL}(\mathcal{E})$. (Cf.~\cite[I, 4.5]{Demazure/Grothendieck:1970a},\cite[XX,
5.1]{Demazure/Grothendieck:1970c}.)
\end{example}

\begin{definition}
Let $G/Y$ be a rationally trivial group $Y$-scheme, let $B/Y$ be a Borel subgroup of $G/Y$ and let $\{ \alpha_1, ..., \alpha_r \}$ be the simple roots of $B$. Using the notation introduced in
\ref{vectormodule}, \ref{c} and \ref{filtration}, define $$n_i(B) := c\left(\mathcal{L}_{B_{\alpha_i}}\right).$$
Note that, since $G/Y$ is rationally trivial, each $B_{\alpha_i}$ is a vector bundle of dimension $1$, so that $\mathcal{L}_{B_{\alpha_i}}$ is an invertible $\mathcal{O}_Y$-module.
\end{definition}

\begin{theorem}[{\cite[2.2.6]{Harder:1968}}]\label{reduction1}
Let $G/Y$ be a rationally trivial group $Y$-scheme of genus $g$ (cf.~\ref{genus}). Then there exists a Borel subgroup $B/Y$ such that $n_i(B) \geq -2g$ for all $i \in I = \{ 1, ..., r \}$.
\end{theorem}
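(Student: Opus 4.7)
The plan is to mimic Weil's geometry of numbers via an iterative reflection argument: start from any Borel subgroup $B/Y$ and, whenever some $n_j(B) < -2g$, ``flip'' $B$ through the Weyl reflection at $\alpha_j$; Riemann--Roch will guarantee that this process terminates at a Borel with $n_i(B) \geq -2g$ for all $i$. An initial Borel $B/Y$ exists because $G/K$ is a split Chevalley scheme (by rational triviality) and the scheme of Borel subgroups of $G/Y$ is proper over $Y$, so any Borel over the generic point extends.

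The key computation is the change of root-space degrees under a flip. Given $B$ and a simple root $\alpha_j$, let $P_j \supset B$ be the minimal parabolic of $G/Y$ associated to $\alpha_j$, and let $B^{s_j} \subset P_j$ denote the other Borel subgroup of $P_j$ containing the same maximal torus as $B$. Then $R_u(B) = R_u(P_j) \cdot U_{\alpha_j}$ and $R_u(B^{s_j}) = R_u(P_j) \cdot U_{-\alpha_j}$, while the canonical Chevalley duality between $U_{\alpha_j}$ and $U_{-\alpha_j}$ (visible for $\mathrm{SL}_2$ in Example~\ref{borelcorrespondence}) gives $\mathcal{L}_{U_{-\alpha_j}} \cong \mathcal{L}_{U_{\alpha_j}}^{\vee}$. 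Summing degrees along the filtration of Proposition~\ref{filtration} therefore yields
\[
c\bigl(R_u(B^{s_j})\bigr) \;=\; c\bigl(R_u(B)\bigr) - 2\, n_j(B),
\]
so whenever $n_j(B) < -2g$ the flip strictly increases the total degree $c(R_u(B))$ by more than $4g$.

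The remaining ingredient is a uniform upper bound on $c(R_u(B))$ as $B$ varies over the Borel subgroups of $G/Y$. This is where Riemann--Roch enters in the spirit of Example~\ref{borelcorrespondence}: rational triviality of $G/Y$ places every $B$-root subgroup $B_\alpha$ as a line-subbundle of a fixed locally free $\mathcal{O}_Y$-module (coming from the Lie algebra bundle of $G/Y$, whose isomorphism class does not depend on $B$). Applying Theorem~\ref{RiemannRoch2} (and its rank-$n$ version \ref{RiemannRoch3}) to the twist of this ambient bundle by $\mathcal{L}_{B_\alpha}^{\vee}$ and using the saturation trick illustrated in Example~\ref{borelcorrespondence} bound each $c(\mathcal{L}_{B_\alpha})$ uniformly in $B$; summing over the finitely many positive roots bounds $c(R_u(B))$ uniformly in $B$, forcing the flipping procedure to terminate.

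The main obstacle I expect is precisely this uniform upper bound: the elementary saturation argument that works for sub-line-bundles of a rank-$2$ bundle must be extended to sub-line-bundles of a fixed ambient bundle of arbitrary rank, and one must verify, via Proposition~\ref{filtration} and the structure theory of reductive group schemes, that the relevant ambient bundle really is independent of $B$ (at least up to isomorphism) so that the Riemann--Roch bound is uniform across all Borels. Once this is in place, the termination argument is a clean finite discrete descent against the monotone $\mathbb{Z}$-valued invariant $c(R_u(B))$.
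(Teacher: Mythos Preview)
Your approach differs from the paper's and has a genuine gap, though not where you anticipated. The flip step assumes that $B/Y$ contains a maximal torus $T/Y$, so that the Weyl-reflected Borel $B^{s_j}$ is well defined over $Y$ and the identity $c(R_u(B^{s_j})) = c(R_u(B)) - 2n_j(B)$ follows from the root decomposition relative to $T$. But a Borel subgroup over a projective curve need not contain a maximal torus: already for $G = \mathrm{SL}(\mathcal{E})$ with $\mathcal{E}$ of rank~$2$, the Borel $B = \mathrm{Stab}_{\mathrm{SL}(\mathcal{E})}(\mathcal{L})$ contains a maximal torus over $Y$ exactly when $0 \to \mathcal{L} \to \mathcal{E} \to \mathcal{E}/\mathcal{L} \to 0$ splits, and this is obstructed by $H^1\bigl(Y, \mathcal{H}om_{\mathcal{O}_Y}(\mathcal{E}/\mathcal{L}, \mathcal{L})\bigr)$. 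Without a global torus there is no canonical ``other Borel in $P_j$'', and if you instead extend $(B^{s_j})_K$ from the generic point by properness of $G/B$, you lose the degree formula. By contrast, the uniform upper bound you flagged as the main obstacle is actually fine: $\mathrm{Lie}(R_u(B))$ is a rank-$d$ subbundle of the fixed Lie-algebra bundle $\mathfrak{g}$, so $c(R_u(B)) = c\bigl(\bigwedge^d \mathrm{Lie}(R_u(B))\bigr)$ is bounded above by the maximal degree of a line subbundle of $\bigwedge^d \mathfrak{g}$.

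The paper avoids the torus issue entirely. In the $A_1$ case it does not flip: after twisting $\mathcal{E}$ by a power of a degree-$1$ line bundle (\ref{invertible1}) so that $2g-2 < c(\mathcal{E}) \leq 2g$, Riemann--Roch (\ref{RiemannRoch3}) forces $H^0(Y,\mathcal{E}) \neq 0$, and the saturation $\mathcal{L}_t$ of a nonzero section has $c(\mathcal{L}_t) \geq 0$, whence $n_1(B_t) = 2c(\mathcal{L}_t) - c(\mathcal{E}) \geq -2g$. No splitting or opposite Borel is needed. The general case is then reduced to $A_1$. Your iterative framework can be repaired along these lines: replace the Weyl flip by this $A_1$-construction applied to the Levi quotient of $P_j$, obtaining $B' \subset P_j$ with $n_j(B') \geq -2g$; since both $B$ and $B'$ contain $R_u(P_j)$ one has $c(R_u(B')) - c(R_u(B)) = n_j(B') - n_j(B) > 0$, and termination against your upper bound follows as before.
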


\begin{proof}
In case $G/Y$ is of type $A_1$, i.e., if there exists a locally free $\mathcal{O}_Y$-module $\mathcal{E}$ of rank $2$ such that $G \cong \mathrm{SL}(\mathcal{E})$, one can proceed as follows. Let $\mathcal{L}$ be an invertible $\mathcal{O}_Y$-submodule such that $\mathcal{E}/\mathcal{L}$ is also invertible and let $B = \mathrm{Stab}_{\mathrm{SL}(\mathcal{E})}(\mathcal{L})$ (cf.\ \ref{borelcorrespondence}).

Then $n_1(B) = c(R_u(B)) = c(\mathcal{H}om_{\mathcal{O}_Y}(\mathcal{E}/ \mathcal{L}, \mathcal{L})) = c((\mathcal{E}/ \mathcal{L})^\vee \otimes_{\mathcal{O}_Y} \mathcal{L}) = c(\mathcal{L}) - c(\mathcal{E} / \mathcal{L}) = 2c(\mathcal{L}) - c(\mathcal{E})$.
By \ref{invertible1} and \ref{c} there exists an invertible $\mathcal{O}_Y$-module $\mathcal{H}$ with $c(\mathcal{H})= 1$. Since $\mathrm{SL}(\mathcal{E} \otimes_{\mathcal{O}_Y} \mathcal{H}^\vee) \cong \mathrm{SL}(\mathcal{E}) \cong \mathrm{SL}(\mathcal{E} \otimes_{\mathcal{O}_Y} \mathcal{H})$ by \ref{borelcorrespondence}, the formula $c(\mathcal{E} \otimes_{\mathcal{O}_Y} \mathcal{H}^\vee) = c(\mathcal{E}) - 2 c(\mathcal{H}) = c(\mathcal{E}) - 2$ allows us to assume without loss of generality that $2g-2 < c(\mathcal{E}) \leq 2g$.

By the Riemann--Roch theorem (cf.\ \ref{RiemannRoch3}) there exists 
$0 \neq t \in H^0(Y,\mathcal{E}) \cong \mathcal{E}(Y)$. Since 
$\mathcal{E}$ is locally free of rank $2$ there exists an open cover 
$\{ U_i\}$ of $Y$ such that $\mathcal{E}_{|U_i} \cong {\mathcal{O}_Y}_{|U_i} 
\oplus {\mathcal{O}_Y}_{|U_i}$ for each $U_i$. 
Therefore $t$ is contained in an invertible $\mathcal{O}_Y$-submodule 
$\mathcal{L}_t$ such that $\mathcal{E} / \mathcal{L}_t$ is also invertible. 
As $t \in H^0(Y,\mathcal{L}_t)$, one has $c(\mathcal{L}_t) \geq 0$ by \cite[Lemma IV.1.2]{Hartshorne:1977}. Hence $$n_1(B_t) = c(\mathcal{H}om_{\mathcal{O}_Y}(\mathcal{E}/ \mathcal{L}_t, \mathcal{L}_t)) = 2c(\mathcal{L}_t) - c(\mathcal{E}) \geq -2g,$$
where $B_t = \mathrm{Stab}_{\mathrm{SL}(\mathcal{E})}(\mathcal{L}_t)$ (cf.\ \ref{borelcorrespondence}). 

The general case can be reduced to the case $A_1$ via a local to global argument.
\end{proof}

\begin{definition}
Let $c_1 < -2g$. A Borel subgroup $B/Y$ of $G/Y$ is {\bf reduced}, if $n_i(B) \geq c_1$ for all $i$.
\end{definition}

\begin{theorem}[{\cite[2.2.13, 2.2.14]{Harder:1968}}] \label{reduction2}
Let $G/Y$ be a rationally trivial group $Y$-scheme. Then there exist 
constants $c_2 > \gamma > c_1$ such that the following hold: Let $B/Y$ 
be a reduced Borel subgroup of $G/Y$ and let $\alpha_{i_0}$ be a simple 
root of $B$ such that $n_{i_0}(B) \geq c_2$. Then each reduced Borel 
subgroup $B'$ of $G/Y$ satisfies $n_{i_0}(B') \geq \gamma$ and is contained
in $P_{i_0}(B)$, where $P_{i_0}(B)$ denotes the maximal parabolic of type $\alpha_{i_0}$ containing $B$. 
\end{theorem}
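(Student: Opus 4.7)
The plan is to reduce the general statement to the rank-one case, where it follows from elementary degree considerations on a rank-$2$ bundle. For the rank-one analysis I would invoke \ref{borelcorrespondence}: if $G = \mathrm{SL}(\mathcal{E})$ for a rank-$2$ locally free $\mathcal{O}_Y$-module $\mathcal{E}$, Borel subgroups correspond to invertible sub-$\mathcal{O}_Y$-modules $\mathcal{L} \subseteq \mathcal{E}$ with invertible quotient, and $n_1(\mathrm{Stab}_G(\mathcal{L})) = 2c(\mathcal{L}) - c(\mathcal{E})$, as computed in the proof of \ref{reduction1}. Two distinct such submodules $\mathcal{L} \neq \mathcal{L}'$ must meet trivially as subsheaves (otherwise each would contain the rank-$1$ intersection as an invertible submodule, forcing $\mathcal{L} = \mathcal{L}'$ by the invertibility of the quotients), so $\mathcal{L} \oplus \mathcal{L}' \hookrightarrow \mathcal{E}$ is an injection of rank-$2$ sheaves, yielding $c(\mathcal{L}) + c(\mathcal{L}') \leq c(\mathcal{E})$, i.e.\
$$n_1(B) + n_1(B') \;\leq\; 0.$$
Hence if $c_2 > -c_1$ and $n_1(B) \geq c_2$, then any other reduced Borel $B'$ would satisfy $n_1(B') \leq -c_2 < c_1$, contradicting reducedness. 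So $B' = B$ and $n_1(B') \geq c_2 > \gamma$ for any $\gamma \in (c_1, c_2)$; note that in rank one $P_{i_0}(B) = G$, so the inclusion statement is automatic.

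For the general case I would first show that any reduced Borel $B'/Y$ must be contained in $P := P_{i_0}(B)$ provided $c_2$ is chosen large. Generically on $Y$ the relative position of $B$ and $B'$ is an element $w$ of the Weyl group $W$, and $B' \subset P$ if and only if $w \in W_P$. For $w \notin W_P$ one has $w^{-1}(\alpha_{i_0}) \in \Phi^- \setminus \Phi^-_P$; translating this combinatorial fact into an estimate via a reduced expression $w = s_{j_1}\cdots s_{j_k}$ and the description of root-space line bundles from \ref{filtration} gives a bound of the shape
$$n_{i_0}(B') \;\leq\; -n_{i_0}(B) + C(G,g),$$
with a constant $C(G,g)$ depending only on the root datum of $G$ and the genus $g$ of $Y$. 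Choosing $c_2 > -c_1 + C(G,g)$ forces $n_{i_0}(B') < c_1$, contradicting reducedness of $B'$. Hence $B' \subset P$.

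Once both $B$ and $B'$ lie in $P$, I would project to the semisimple rank-one quotient $H$ of the derived Levi factor of $P$ associated with $\alpha_{i_0}$, a rationally trivial group $Y$-scheme of type $A_1$. The images $B_H, B'_H$ are Borels of $H$, and the identification of the root space of $B$ in direction $\alpha_{i_0}$ with the corresponding root space of $B_H$ shows that $n_{i_0}(B)$ agrees with $n_1(B_H)$ up to a universal constant, and similarly for $B'$. Applying the rank-one analysis of the first paragraph inside $H$ gives $B'_H = B_H$, and hence $n_{i_0}(B') = n_{i_0}(B) \geq c_2$ up to that constant; picking $\gamma \in (c_1, c_2)$ with enough slack to absorb the constant finishes the proof.

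The hard part will be the Schubert-type estimate $n_{i_0}(B') \leq -n_{i_0}(B) + C(G,g)$ for $w \notin W_P$: one has to turn the combinatorial statement about $w^{-1}(\alpha_{i_0})$ into an actual degree computation for the invertible sheaf $\mathcal{L}_{B'_{\alpha_{i_0}}}$ in terms of the sheaves $\mathcal{L}_{B_\alpha}$, tracking the simple reflections in a reduced expression of $w$ and applying \ref{filtration} together with \ref{c} at each step. Everything else reduces to the rank-one bound of the first paragraph and careful bookkeeping within the reductive group scheme formalism of \cite{Demazure/Grothendieck:1970c}.
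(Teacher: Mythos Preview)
The paper does not actually prove this theorem. Immediately after Proposition~\ref{isolatedparabolic2} the author writes that ``the length of the proof of \ref{reduction2} by far surpasses anything I can reasonably include in this survey'' and simply cites \cite[2.2.13, 2.2.14]{Harder:1968}; he also remarks that Harder's original argument in fact uses \ref{isolatedparabolic2} as an ingredient rather than the other way round. So there is no in-paper proof to compare your proposal against.

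On your proposal itself, two remarks. First, a slip in the rank-one paragraph: with the paper's conventions (cf.\ \ref{isolatedparabolic2}, where $\bigcap_{i \in I} P_i(B) = B$), the maximal parabolic $P_{i_0}(B)$ in a group of semisimple rank one is $B$ itself, not $G$. Thus the inclusion $B' \subset P_{i_0}(B)$ is \emph{not} automatic; it is exactly the statement $B' = B$, which you do in fact establish via the degree inequality $n_1(B) + n_1(B') \leq 0$. Second, the ``Schubert-type estimate'' you single out as the hard part is indeed the crux, but in Harder's argument it is not one inequality: one compares Borels differing by a single simple reflection at a time and must control the change in \emph{all} the $n_j$ (so that reducedness is preserved along the chain), not only in $n_{i_0}$. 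Your projection-to-Levi step likewise requires that the image $B'_H$ be reduced in the rank-one sense, which is not a formal consequence of $B'$ being reduced in $G$ without further argument. So your outline points in the right direction and matches the overall architecture of Harder's proof, but the substantive work---the inductive comparison under simple reflections and the bookkeeping through which \ref{isolatedparabolic2} emerges---remains to be carried out.
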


\begin{proposition}[{\cite[2.2.3, 2.2.11]{Harder:1968}}] \label{isolatedparabolic2}
To each reduced Borel subgroup $B/Y$ of $G/Y$, let 
$I^B := \{ i \in I \mid n_i(B) \geq c_2 \}$, and let 
$P^B := \bigcap_{i \in I^B} P_i(B)$. Then $$P := \bigcap_{B \subset G/Y \mbox{ reduced}} P^B$$ either equals $G/Y$ or is a parabolic subgroup of $G/Y$.
\end{proposition}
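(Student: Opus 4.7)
The plan is to show that every $P^B$ already contains every reduced Borel subgroup of $G/Y$, and then to exploit the finiteness of the set of parabolic subgroups above a fixed Borel in order to conclude that the intersection $P$ is itself either a parabolic or all of $G/Y$.

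First I would establish the key containment: for every reduced Borel subgroup $B/Y$ of $G/Y$, the parabolic $P^B$ contains every reduced Borel subgroup of $G/Y$. If $I^B = \emptyset$ this holds trivially since $P^B = G/Y$ by the empty-intersection convention. Otherwise, pick any $i_0 \in I^B$; by definition $n_{i_0}(B) \geq c_2$, and Theorem~\ref{reduction2} then tells us that every reduced Borel subgroup $B' \subset G/Y$ is contained in $P_{i_0}(B)$. Intersecting these containments over $i_0 \in I^B$ yields $B' \subset P^B$, as claimed. Note also that, being a finite intersection of parabolics above the common Borel $B$, each $P^B$ is itself a parabolic subgroup of $G/Y$ (or equals $G/Y$).

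Next, Theorem~\ref{reduction1} guarantees the existence of at least one reduced Borel subgroup $B_0/Y$. By the previous step every $P^B$ contains $B_0$, so each $P^B$ is either all of $G/Y$ or a parabolic subgroup containing $B_0$. Now the parabolic subgroups of $G/Y$ containing the fixed Borel $B_0$ are in bijection with subsets of the simple roots of $B_0$ (cf.\ \cite[XXVI]{Demazure/Grothendieck:1970c}) and thus form a finite set. Hence the family $\{ P^B \}_B$ takes only finitely many values, and the possibly infinite intersection $P = \bigcap_B P^B$ is in fact a finite intersection of standard parabolics relative to $B_0$. Such an intersection is itself a standard parabolic of $G/Y$ (corresponding to the intersection of the types), unless every $I^B$ is empty, in which case every $P^B = G/Y$ and consequently $P = G/Y$.

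The step I expect to be the main obstacle is checking that the containment $B' \subset P_{i_0}(B)$ coming out of Theorem~\ref{reduction2} is interpreted as an inclusion of group subschemes of $G/Y$, so that the subsequent intersection operations happen inside $G/Y$ in the scheme-theoretic sense of \cite[XXVI]{Demazure/Grothendieck:1970c}, and that the combinatorial description of standard parabolics above $B_0$ is available in the relative setting over the base curve $Y$. Once this is in place, the argument is purely combinatorial: finitely many standard parabolics above a common Borel intersect to give another standard parabolic.
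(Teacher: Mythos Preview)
Your proposal is correct and follows essentially the same line as the paper: the heart of both arguments is the observation, drawn from Theorem~\ref{reduction2}, that every reduced Borel $B'$ lies in $P^B$, so in particular a fixed reduced Borel $B_0$ lies in $P$. The paper then simply concludes that a subgroup containing a Borel is parabolic (or all of $G/Y$), whereas you reach the same conclusion via the finiteness of standard parabolics above $B_0$; this extra step is harmless but not needed, since once $B_0 \subseteq P$ the characterization of parabolics as closed subgroups containing a Borel applies directly.
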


\begin{proof}
Let $B$ be a reduced Borel subgroup of $G$. By \ref{reduction2} each reduced 
Borel subgroup $B'$ of $G$ is contained in $P^{B}$ and conversely, again 
by \ref{reduction2}, $B \subset P^{B'}$. Hence $P$ contains a Borel group 
and therefore either equals $G/Y$ or is a parabolic subgroup of $G/Y$. 
\end{proof}

\noindent
Theorem \ref{reduction2} is the heart of Harder's reduction theory. The only proof of \ref{reduction2} known to me in fact uses \ref{isolatedparabolic2}, as can be guessed from the numbering used in \cite{Harder:1968}. However, as the length of the proof of \ref{reduction2} by far surpasses anything I can reasonably include in this survey and as I will need to refer to \ref{isolatedparabolic2} later, I took the liberty of deducing \ref{isolatedparabolic2} from \ref{reduction2} for the sake of this exposition.

\begin{proposition}[{\cite[p.~120]{Harder:1968}, \cite[p.~39]{Harder:1969}}] \label{sumofcharacters}
Let $G/Y$ be a reductive group $Y$-scheme, let $P/Y$ be a parabolic $K$-subgroup of $G/Y$, let $d:= \mathrm{dim}(R_u(P))$, and let $\Delta^+_{P}$ be the set of positive roots of $P$. Then the character $$\chi_{P} : P \stackrel{\mathrm{Ad}}{\rightarrow} \mathrm{GL}\left(\mathrm{Lie}(R_u(P))\right) \stackrel{\mathrm{det}}{\rightarrow} \mathrm{GL}\left(\bigwedge^{d} \mathrm{Lie}(R_u(P))\right) \cong \mathrm{G}_\mathrm{m},$$ considered as a character of a maximal split torus contained in $P$, is given by $\chi_{P} = \sum_{\alpha \in \Delta^+_{P}} \mathrm{dim}(P_\alpha) \alpha$.
\end{proposition}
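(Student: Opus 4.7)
The plan is to restrict everything to a maximal split torus $T \subset P$ and exploit the weight-space decomposition of $\mathrm{Lie}(R_u(P))$, since the claim is only about $\chi_P$ viewed as a character of such a $T$.

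First I would fix a maximal split torus $T \subset P$ and use the standard root-space decomposition of the Lie algebra of the reductive group scheme with respect to $T$, cf.\ \cite[XIX~1.10]{Demazure/Grothendieck:1970c} and \ref{filtration}. Under the adjoint action of $T$, one obtains $$\mathrm{Lie}(R_u(P)) \;=\; \bigoplus_{\alpha \in \Delta^+_P} \mathfrak{g}^\alpha,$$ where $T$ acts on the root space $\mathfrak{g}^\alpha$ by the character $\alpha$, and where $\dim(\mathfrak{g}^\alpha) = \dim(P_\alpha)$ by the very definition of the vector bundle $P_\alpha$ corresponding to $\mathfrak{g}^\alpha$ (cf.\ \ref{vectormodule}, \ref{filtration}).

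Next I would compute the top exterior power. Using the canonical isomorphism $$\bigwedge^{d} \mathrm{Lie}(R_u(P)) \;\cong\; \bigotimes_{\alpha \in \Delta^+_P} \bigwedge^{\dim(\mathfrak{g}^\alpha)} \mathfrak{g}^\alpha,$$ the action of $t \in T$ on the one-dimensional space $\bigwedge^{\dim(\mathfrak{g}^\alpha)} \mathfrak{g}^\alpha$ is by the scalar $\alpha(t)^{\dim(\mathfrak{g}^\alpha)}$. Taking the product over $\alpha \in \Delta^+_P$ yields, in the multiplicative notation of $\mathrm{G}_\mathrm{m}$, $$\chi_P(t) \;=\; \prod_{\alpha \in \Delta^+_P} \alpha(t)^{\dim(P_\alpha)},$$ which in the additive notation for characters becomes exactly $\chi_P = \sum_{\alpha \in \Delta^+_P} \dim(P_\alpha)\,\alpha$.

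There is no genuine obstacle: once the root-space decomposition of $\mathrm{Lie}(R_u(P))$ is in hand, the result is a determinant-equals-product-of-eigenvalues calculation. The only mild subtlety is to make sure one is consistent about additive versus multiplicative notation for characters, and to invoke that the factorisation of $\bigwedge^{\mathrm{top}}$ through root spaces is valid because the $\mathfrak{g}^\alpha$ are $T$-stable and the decomposition is a decomposition of $T$-modules. Since the statement is formulated only on a maximal split torus, there is no need to extend $\chi_P$ from $T$ to $P$; the computation on $T$ is the entire content.
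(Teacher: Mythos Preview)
Your argument is correct and is exactly the standard weight-space computation one expects here: decompose $\mathrm{Lie}(R_u(P))$ into $T$-eigenspaces $\mathfrak{g}^\alpha$, observe that the determinant of the adjoint action of $t\in T$ is the product of eigenvalues with multiplicity, and pass to additive notation. There is nothing to add.

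Note, however, that the paper itself does not supply a proof of this proposition at all; it merely states the result with references to \cite[p.~120]{Harder:1968} and \cite[p.~39]{Harder:1969}. So there is no ``paper's proof'' to compare against. Your write-up is precisely the kind of short justification one would expect to find behind those citations.
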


\begin{definition} \label{ccc}
Let $B$ be a minimal parabolic $K$-subgroup of $G/Y$, let $\{ \alpha_1, ..., \alpha_r \}$ be the simple roots of $B$, and let $(P_i)_i$ be the maximal parabolic $K$-subgroups of $G$ of type $\alpha_i$. Using the notation of \ref{vectormodule}, \ref{RiemannRoch3} and \ref{filtration}, define $$p_i(B) := p(P_i) := \sum_{\alpha \in \Delta^+_{P_i}} c(\mathcal{L}_{P_\alpha}).$$ 
\end{definition}

\newparagraph\label{piandnu}
Let $B$ be a minimal parabolic $K$-subgroup of $G$, let $R(B)$ be the radical of $B$, let $R_u(B)$ be the unipotent radical of $B$, let $T = R(B)/R_u(B)$, let $S \subseteq T$ be the maximal $K$-split subtorus of $T$, let $\pi = \{ \alpha_1, ..., \alpha_r \} \subset X(S)$ be the system of simple roots, and let $X(B) = \mathrm{Hom}_K(B,\mathrm{G}_\mathrm{m})$ be the module of $K$-rational characters of $B$ so that $X(B) \otimes \mathbb{Q} = X(S) \otimes \mathbb{Q}$, let $P_i \supseteq B$ be the maximal $K$-parabolic of type $\alpha_i$, let $\chi_{P_i} : P_i \to \mathrm{G}_\mathrm{m}$ be the sum of roots of $P_i$ (cf.\ \ref{sumofcharacters}), and let $\chi_i := {\chi_{P_i}}_{|B}$.
The $\chi_i$ form a basis of $X(B) \otimes \mathbb{Q}$ and, if $(\cdot,\cdot)$ is a positive definite bilinear form on $X(B) \otimes \mathbb{Q}$ which is invariant under the action of the Weyl group, then
\begin{eqnarray}
(\chi_i,\alpha_j) & = & 0, \quad \mbox{if $i \neq j$, and} \label{orthogonal} \\
(\chi_i,\alpha_i) & > & 0 \quad \mbox{for all $i \in I$}. \label{acute}
\end{eqnarray} 
If $G$ is rationally trivial, for $m_{i,j} \in \mathbb{Z}$ such that $\chi_i = \sum_{j=1}^r m_{i,j} {\alpha_j}$,
then
 $$p_i(B) = \sum_{j=1}^r m_{i,j} n_j(B).$$
Furthermore, for $c_{i,j} \in \mathbb{Q}$ such that $\alpha_i = \sum_{j=1}^r c_{i,j} \chi_j$,
then $$n_i(B) =  \sum_{j=1}^r c_{i,j} p_j(B).$$

\section{Reduction theory for reductive groups over the ad\`eles}

\begin{introduction}
In this section I describe the interplay between Harder's reduction theory
and Weil's geometry of numbers based on \cite{Harder:1969} in order to
arrive at a geometric version of Harder's reduction theory.
\end{introduction}

\newparagraph\label{tamagawameasure}
Let $G/Y$ be a reductive group $Y$-scheme, let $P/Y$ be a parabolic $K$-subgroup of $G/Y$, and let $\omega$ be a non-trivial volume form on $R_u(P)$ defined over $K$. This volume form yields a Haar measure of $R_u(P(\hat{\mathbb{A}}_K))$ which is independent of the choice of $\omega$, i.e., for each $0 \neq \lambda \in K$ the volume forms $\omega$ and $\lambda \omega$ yield identical Haar measures; cf.\ \cite[p.~37]{Harder:1969}, \cite[2.3.1]{Weil:1982}. 

This measure differs from the classical Tamagawa measure by the factor 
$q^{(1-g)d}$ where $d = \mathrm{dim}(R_u(P))$, cf.\ 
\cite[p.~38]{Harder:1969}, \cite{Tamagawa:1966}, 
\cite[\S~14.4]{Voskresenski:1998}, \cite[2.3]{Weil:1982},
\cite[p.~113]{Weil:1995}.

\begin{theorem}[{\cite[1.3.1]{Harder:1969}}] \label{GeometryOfNumbers}
Let $G/Y$ be a reductive group $Y$-scheme, let $P/Y$ be a maximal parabolic 
$K$-subgroup of $G/Y$, let $\omega$ be a non-trivial volume form of $R_u(P)$ 
defined over $K$, and let $\mathfrak{K} := G(\hat{\mathcal{O}}_K)$,
cf.~\ref{adeles}. Then $$\int_{R_u(P(\hat{\mathbb{A}}_K)) \cap \mathfrak{K}}
\omega_{\hat{\mathbb{A}}_K} = q^{p(P)}.$$ 
\end{theorem}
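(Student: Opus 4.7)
The plan is to exploit the filtration of the unipotent radical from Proposition \ref{filtration} to rewrite the integral as a product indexed by the positive roots, then compute each factor via Proposition \ref{mu2}. By Proposition \ref{filtration}, $R_u(P)$ admits a filtration $R_u(P) = U_0 \supset U_1 \supset \cdots \supset U_k = \{e\}$ by normal $Y$-subschemes with successive quotients $U_i/U_{i+1} \cong \prod_{\alpha \in \Delta^+_P,\, l(\alpha)=i+1} P_\alpha$. Since $G/Y$ is reductive, each root space $P_\alpha$ is a line bundle over $Y$ associated to an invertible $\mathcal{O}_Y$-module $\mathcal{L}_{P_\alpha}$ (cf.\ \ref{vectormodule}).

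Passing to $\hat{\mathbb{A}}_K$-points, the filtration yields short exact sequences of locally compact groups $1 \to U_{i+1}(\hat{\mathbb{A}}_K) \to U_i(\hat{\mathbb{A}}_K) \to \prod_{l(\alpha)=i+1} P_\alpha(\hat{\mathbb{A}}_K) \to 1$. Since the Haar measure associated to a non-trivial $K$-rational volume form on $R_u(P)$ is independent of the chosen form (cf.\ \ref{tamagawameasure}), one may decompose $\omega$ along this filtration into volume forms $\omega_\alpha$ on the individual line bundles and integrate iteratively by Fubini. The subgroup $\mathfrak{K} = G(\hat{\mathcal{O}}_K)$ respects the filtration because the $U_i$ are defined over $Y$ and integrality is a local-at-each-place condition, so
\[
\int_{R_u(P(\hat{\mathbb{A}}_K)) \cap \mathfrak{K}} \omega_{\hat{\mathbb{A}}_K} \;=\; \prod_{\alpha \in \Delta^+_P} \int_{P_\alpha(\hat{\mathbb{A}}_K) \cap \mathfrak{K}_\alpha} \omega_\alpha,
\]
where $\mathfrak{K}_\alpha$ denotes the integral points of the line bundle $P_\alpha$.

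To evaluate each factor, note that by \ref{weilcartier}, \ref{concreteweil} and \ref{cartierinvert} the invertible module $\mathcal{L}_{P_\alpha}$ corresponds to a Weil divisor $D_\alpha$ with $\deg D_\alpha = c(\mathcal{L}_{P_\alpha})$. Choosing local trivializations of $\mathcal{L}_{P_\alpha}$ at every $\mathbb{F}_q$-closed point identifies $P_\alpha(\hat{\mathbb{A}}_K) \cap \mathfrak{K}_\alpha$ with $\hat{\mathbb{A}}_K(D_\alpha)$, because a local section $x_P$ is integral exactly when $\nu_P(x_P) + \nu_P(D_\alpha) \geq 0$. Proposition \ref{mu2} gives $\int_{\hat{\mathbb{A}}_K(D_\alpha)} \omega_{\hat{\mathbb{A}}_K} = q^{c(\mathcal{L}_{P_\alpha})}$ for each factor, and multiplying over all $\alpha \in \Delta^+_P$ together with Definition \ref{ccc} yields $q^{\sum_\alpha c(\mathcal{L}_{P_\alpha})} = q^{p(P)}$, as desired.

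The main obstacle is the second step: justifying that the measure decomposes cleanly along the filtration so that one genuinely obtains a product of integrals. In positive characteristic the exponential map is unavailable, so one cannot simply identify $R_u(P)$ with its Lie algebra; instead one must argue via iterated Fubini on the semidirect-product structure $U_i \cong U_{i+1} \rtimes (U_i/U_{i+1})$ and check that the integral-point subgroups $U_i(\hat{\mathbb{A}}_K) \cap \mathfrak{K}$ are themselves exact extensions compatible with the chosen $K$-rational splittings, so that integrating $\omega$ against the product measure reproduces the product of the line-bundle integrals. This technical bookkeeping is the bulk of Harder's argument in \cite[\S 1.3]{Harder:1969}.
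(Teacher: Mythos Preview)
Your approach is essentially the paper's: use the filtration of $R_u(P)$ from \ref{filtration}, split the integral into a product over positive roots, and evaluate each factor via \ref{mu2}. The paper writes the computation as a chain of equalities referencing \ref{filtration}, \ref{RiemannRoch3}, \ref{mu2}, \ref{ccc}, which is exactly your outline.

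There is one slip. You assert that ``since $G/Y$ is reductive, each root space $P_\alpha$ is a line bundle''. That is true when $G/K$ is split (the rationally trivial case), but the theorem is stated for an arbitrary reductive group $Y$-scheme with $P$ a maximal parabolic $K$-subgroup; for relative roots the root spaces $P_\alpha$ may have $\dim(P_\alpha)>1$, so $\mathcal{L}_{P_\alpha}$ is a locally free $\mathcal{O}_Y$-module of possibly higher rank. The paper inserts one more layer of filtration: for each $\alpha$ it chooses $0\subset P_\alpha^{(1)}\subset\cdots\subset P_\alpha^{(\dim P_\alpha)}=P_\alpha$ with invertible successive quotients (as in \ref{RiemannRoch3}), so that
\[
\int_{P_\alpha(\hat{\mathbb{A}}_K)\cap\hat{\mathcal{O}}_K^{\dim P_\alpha}}\omega_{\hat{\mathbb{A}}_K}
\;=\;\prod_{i=1}^{\dim P_\alpha}\int_{(P_\alpha^{(i)}/P_\alpha^{(i-1)})(\hat{\mathbb{A}}_K)\cap\hat{\mathcal{O}}_K}\omega_{\hat{\mathbb{A}}_K}
\;\stackrel{\ref{mu2}}{=}\;q^{c(\mathcal{L}_{P_\alpha})},
\]
with $c(\mathcal{L}_{P_\alpha})$ computed as in \ref{RiemannRoch3}. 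With this extra step your argument goes through unchanged and matches the paper's proof.
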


\begin{proof}
For each $\alpha \in \Delta_P^+$ let 
$0 \subset P_\alpha^{(1)} \subset \cdots \subset 
P_\alpha^{(\mathrm{dim}(P_\alpha))} = P_\alpha$ be a filtration such that 
each $P^{(i)}_\alpha/P^{(i-1)}_\alpha$ is a vector bundle of dimension $1$; 
cf.\ \cite[2.1]{Grothendieck:1957}, \cite[p.~122]{Harder:1968}.
One then computes
\begin{eqnarray*}
\int_{R_u(P(\hat{\mathbb{A}}_K)) \cap \mathfrak{K}}
\omega_{\hat{\mathbb{A}}_K} & \stackrel{\ref{filtration}}{=} & 
\prod_{\alpha \in \Delta_P^+} \left(\int_{P_\alpha(\hat{\mathbb{A}}_K) 
\cap {\hat{\mathcal{O}}_K}^{\mathrm{dim}(P_\alpha)}} \omega_{\hat{\mathbb{A}}_K} \right) \\ 
& \stackrel{\ref{RiemannRoch3}}{=} & \prod_{\alpha \in \Delta_P^+} \prod_{i=1}^{\mathrm{dim}(P_\alpha)}\left(\int_{(P^{(i)}_\alpha/P^{(i-1)}_\alpha)(\hat{\mathbb{A}}_K) 
\cap {\hat{\mathcal{O}}_K}} \omega_{\hat{\mathbb{A}}_K} \right) \\ 
& 
\stackrel{\ref{mu2}}{=} & \prod_{\alpha \in \Delta_P^+} 
q^{c(\mathcal{L}_{P_\alpha})} \\ & \stackrel{\ref{ccc}}{=} & q^{p(P)}.
\end{eqnarray*}
\end{proof}

\begin{theorem}[{\cite[1.3.2]{Harder:1969}}] \label{transformationformula}
Let $G/Y$ be a reductive group $Y$-scheme, let $P/Y$ be a maximal parabolic 
$K$-subgroup, and let $\mathfrak{K} := G(\hat{\mathcal{O}}_K)$. Then, for each 
$x \in P(\hat{\mathbb{A}}_K)$, one has $$\int_{R_u(P(\hat{\mathbb{A}}_K)) \cap \mathfrak{K}} \omega_{\hat{\mathbb{A}}_K} = |\chi_P(x)| \int_{R_u(P(\hat{\mathbb{A}}_K)) \cap {}^x\mathfrak{K}} \omega_{\hat{\mathbb{A}}_K}.$$
\end{theorem}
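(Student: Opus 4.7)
The plan is to realise the identity as an adelic change of variables formula applied to conjugation by $x$ on the unipotent radical. Since $x\in P(\hat{\mathbb{A}}_K)$ and $R_u(P)$ is normal in $P$, the inner automorphism $c_x\colon u\mapsto xux^{-1}$ restricts to an algebraic automorphism of the unipotent $K$-group $R_u(P)$. Its differential at the identity is the adjoint representation $\mathrm{Ad}(x)$ on $\mathrm{Lie}(R_u(P))$. By Proposition \ref{sumofcharacters}, the character $\chi_P$ is by construction the determinant of this adjoint action, so $c_x^{*}\omega=\chi_P(x)\cdot\omega$ as top-degree algebraic volume forms on $R_u(P)$.

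I would next translate this algebraic identity into a statement about the Haar measure $\omega_{\hat{\mathbb{A}}_K}$ place by place. At each place $P\in Y^\circ$ the form $\omega$ induces (independently of the chosen $\omega$, as noted in \ref{tamagawameasure}) a Haar measure on $R_u(P(K_P))$, and for any $K_P$-linear automorphism the pullback of this measure scales by the local absolute value of its determinant. Applied to $c_x$ at the place $P$, this gives the scaling factor $|\chi_P(x_P)|_P$. Taking the product over all places and using the definition of $|\cdot|$ on $\hat{\mathbb{A}}_K$ from \ref{adeles} (and the product formula \ref{productformula} to ensure only finitely many non-trivial factors), one obtains the global transformation law $c_x^{*}\omega_{\hat{\mathbb{A}}_K}=|\chi_P(x)|\cdot\omega_{\hat{\mathbb{A}}_K}$.

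With the measure-theoretic ingredient in place, the rest is formal. I would carry out the substitution
\[
\int_{R_u(P(\hat{\mathbb{A}}_K))\cap\mathfrak{K}}\omega_{\hat{\mathbb{A}}_K}
=\int_{c_x^{-1}(R_u(P(\hat{\mathbb{A}}_K))\cap\mathfrak{K})}c_x^{*}\omega_{\hat{\mathbb{A}}_K}
=|\chi_P(x)|\int_{c_x^{-1}(R_u(P(\hat{\mathbb{A}}_K))\cap\mathfrak{K})}\omega_{\hat{\mathbb{A}}_K},
\]
and then identify the domain of integration: since $c_x$ preserves $R_u(P(\hat{\mathbb{A}}_K))$ and $c_x^{-1}(\mathfrak{K})=x^{-1}\mathfrak{K}x={}^{x}\mathfrak{K}$ in the paper's conjugation convention, the right-hand side becomes $|\chi_P(x)|\int_{R_u(P(\hat{\mathbb{A}}_K))\cap{}^{x}\mathfrak{K}}\omega_{\hat{\mathbb{A}}_K}$, as claimed.

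The main obstacle is not conceptual but bookkeeping: one must verify that the local-to-global passage is legitimate, i.e.\ that the adelic Jacobian really equals the product of local Jacobians (so that the adelic $|\chi_P(x)|$ of \ref{adeles} appears, consistently with the independence-of-$\omega$ result in \ref{tamagawameasure}), and that the conjugation convention matches the direction chosen in the paper so that ${}^{x}\mathfrak{K}$ and not ${}^{x^{-1}}\mathfrak{K}$ shows up on the right.
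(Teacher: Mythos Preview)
Your proposal is correct and follows the same approach as the paper: the paper's proof is a one-sentence summary stating that $|\chi_P(x)|$, as the absolute value of the determinant of the derivative of conjugation by $x$ (via $\mathrm{Ad}$ and $\det$, cf.\ \ref{sumofcharacters}), measures the ratio of the two volumes, which is exactly the change-of-variables argument you spell out in detail. Your additional remarks on the local-to-global passage and the conjugation convention are appropriate caveats but do not represent a departure from the paper's reasoning.
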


\begin{proof}
The absolute value of the determinant of the derivative of conjugation by $x$
\begin{eqnarray*}
|\chi_{P}(\cdot)| : P(\hat{\mathbb{A}}_K) & \stackrel{\mathrm{Ad}}{\rightarrow} & \mathrm{GL}\left(\mathrm{Lie}(R_u(P(\hat{\mathbb{A}}_K)))\right) \stackrel{\mathrm{det}}{\rightarrow} \mathrm{GL}\left(\bigwedge^{d} \mathrm{Lie}(R_u(P(\hat{\mathbb{A}}_K)))\right) \stackrel{\mathrm{|\cdot|}}{\rightarrow} \mathbb{R} \\
x & \mapsto & |\chi_P(x)|
\end{eqnarray*}
(cf.\ \ref{sumofcharacters}) measures the ratio of the volumes of 
$R_u(P(\hat{\mathbb{A}}_K)) \cap \mathfrak{K}$ and of 
$R_u(P(\hat{\mathbb{A}}_K)) \cap {}^x\mathfrak{K}$.
\end{proof}

\begin{definition} \label{piandnu2}
Using the notation of \ref{piandnu}, let 
\begin{eqnarray*}
\pi_i(B,{}^x\mathfrak{K}) := \pi(P_i,{}^x\mathfrak{K})& := & \int_{R_u(P_i(\hat{\mathbb{A}}_K)) \cap {}^x\mathfrak{K}} \omega_{\hat{\mathbb{A}}_K}
\quad \mbox{ and} \\
\nu_i(B,{}^x\mathfrak{K}) & := & \prod_{j=1}^r \pi_j(B,{}^x\mathfrak{K})^{c_{i,j}}.
\end{eqnarray*}
\end{definition}

\begin{corollary}[{\cite[p.~40]{Harder:1969}}]\label{transformationformulafornu}For each $x \in B(\hat{\mathbb{A}}_K)$, one has $\nu_i(B,\mathfrak{K}) = |\alpha_i(x)| \nu_i(B,{}^x\mathfrak{K})$.
\end{corollary}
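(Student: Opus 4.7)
The plan is to apply Theorem \ref{transformationformula} to each of the maximal parabolic $K$-subgroups $P_j \supseteq B$ and then repackage the resulting identities according to the change-of-basis formula $\alpha_i = \sum_{j=1}^r c_{i,j} \chi_j$ from \ref{piandnu}.

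First, I observe that for $x \in B(\hat{\mathbb{A}}_K)$ one has $x \in P_j(\hat{\mathbb{A}}_K)$ for every $j \in \{1,\dots,r\}$, since $B \subseteq P_j$. Therefore Theorem \ref{transformationformula}, applied to the maximal parabolic $K$-subgroup $P_j$, yields
$$\pi_j(B,\mathfrak{K}) \;=\; |\chi_{P_j}(x)|\,\pi_j(B,{}^x\mathfrak{K})$$
for each $j$. By definition (cf.\ \ref{piandnu}) the restriction of $\chi_{P_j}$ to $B$ is exactly $\chi_j$, so $|\chi_{P_j}(x)| = |\chi_j(x)|$.

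Next I raise the $j$th equation to the power $c_{i,j}$ and take the product over $j$. Using the definition of $\nu_i$ from \ref{piandnu2}, this gives
$$\nu_i(B,\mathfrak{K}) \;=\; \prod_{j=1}^r \pi_j(B,\mathfrak{K})^{c_{i,j}} \;=\; \prod_{j=1}^r |\chi_j(x)|^{c_{i,j}} \cdot \prod_{j=1}^r \pi_j(B,{}^x\mathfrak{K})^{c_{i,j}} \;=\; \left(\prod_{j=1}^r |\chi_j(x)|^{c_{i,j}}\right) \nu_i(B,{}^x\mathfrak{K}).$$
Finally, since characters compose multiplicatively and since $\alpha_i = \sum_{j=1}^r c_{i,j}\chi_j$ in $X(B)\otimes\mathbb{Q}$, one has $|\alpha_i(x)| = \prod_{j=1}^r |\chi_j(x)|^{c_{i,j}}$ (with the positive real powers being well defined because the $c_{i,j}$ are rational and the $|\chi_j(x)|$ are positive reals). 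Substituting this in yields the claim.

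There is essentially no real obstacle here beyond the bookkeeping; the one mild subtlety worth flagging is the passage from the integer-weighted formula of \ref{sumofcharacters} (giving $\chi_{P_j}$ as an integer combination of simple roots) to the rational-weighted inversion appearing in \ref{piandnu}. This is handled by the fact that the $\chi_j$ form a $\mathbb{Q}$-basis of $X(B)\otimes\mathbb{Q}$, so that the transition matrix $(c_{i,j})$ exists in $\mathbb{Q}$, and by taking absolute values so that fractional powers make sense on $\mathbb{R}_{>0}$.
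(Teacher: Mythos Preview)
Your argument is correct and is exactly the intended derivation: the paper states this corollary without proof, as it follows immediately from Theorem~\ref{transformationformula} applied to each $P_j$, the definition of $\nu_i$ in \ref{piandnu2}, and the relation $\alpha_i = \sum_j c_{i,j}\chi_j$ from \ref{piandnu}. There is nothing to add.
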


\begin{observation} \label{kinvariance}
For each $x \in G(K)$, one has $\pi_i(B,\mathfrak{K}) = \pi_i({}^xB,{}^x\mathfrak{K})$ and $\nu_i(B,\mathfrak{K}) = \nu_i({}^xB,{}^x\mathfrak{K})$.
\end{observation}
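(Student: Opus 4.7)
The plan is to exhibit conjugation by $x$ as a measure-preserving bijection between the two domains of integration. For $x \in G(K)$, the map $c_x \colon g \mapsto xgx^{-1}$ is a $K$-isomorphism of algebraic groups that carries each maximal $K$-parabolic $P_i \supseteq B$ to the maximal $K$-parabolic ${}^xP_i \supseteq {}^xB$ of the same type $\alpha_i$, hence its restriction identifies $R_u(P_i)$ with $R_u({}^xP_i) = {}^xR_u(P_i)$ as $K$-groups. Passing to $\hat{\mathbb{A}}_K$-points and using that by definition ${}^x\mathfrak{K} = x\mathfrak{K}x^{-1}$, the map $c_x$ restricts to a bijection
$$
R_u(P_i(\hat{\mathbb{A}}_K)) \cap \mathfrak{K} \;\longrightarrow\; R_u({}^xP_i(\hat{\mathbb{A}}_K)) \cap {}^x\mathfrak{K}.
$$

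The crux is to check that $c_x$ is measure-preserving for the canonical Haar measures $\omega_{\hat{\mathbb{A}}_K}$ introduced in \ref{tamagawameasure}. Choose any non-trivial $K$-rational volume form $\omega'$ on $R_u({}^xP_i)$. Because $c_x$ is defined over $K$, the pullback $c_x^*\omega'$ is again a non-trivial $K$-rational volume form, this time on $R_u(P_i)$. By the invariance statement in \ref{tamagawameasure}, i.e.\ the product formula \ref{productformula} applied to the scalar in $K^*$ relating $c_x^*\omega'$ to the volume form originally used to define the measure on $R_u(P_i(\hat{\mathbb{A}}_K))$, both volume forms induce the same Haar measure. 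Consequently the pullback under $c_x$ of the Haar measure on $R_u({}^xP_i(\hat{\mathbb{A}}_K))$ coincides with the Haar measure on $R_u(P_i(\hat{\mathbb{A}}_K))$.

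Combining the bijection with measure preservation yields $\pi_i(B,\mathfrak{K}) = \pi_i({}^xB,{}^x\mathfrak{K})$. The second equality $\nu_i(B,\mathfrak{K}) = \nu_i({}^xB,{}^x\mathfrak{K})$ is then immediate from the definition $\nu_i(B,\mathfrak{K}) = \prod_{j=1}^r \pi_j(B,\mathfrak{K})^{c_{i,j}}$ in \ref{piandnu2}, applied componentwise. The main obstacle, and the only non-formal step, is the measure-preservation step: it is not a consequence of Haar measure theory on a single group, but rather of the fact that the canonical Haar measure on the adelic points of a $K$-rational unipotent group is independent of the choice of $K$-rational volume form, which in turn is precisely the content of the product formula \ref{productformula}.
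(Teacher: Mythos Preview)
Your argument is correct and follows the same approach as the paper: conjugation by $x\in G(K)$ carries a $K$-rational volume form on $R_u(P_i)$ to one on $R_u({}^xP_i)$, and the independence of the canonical adelic Haar measure from the choice of $K$-rational volume form (\ref{tamagawameasure}, ultimately the product formula) then yields $\pi_i(B,\mathfrak{K})=\pi_i({}^xB,{}^x\mathfrak{K})$, with the $\nu_i$ identity following formally from \ref{piandnu2}. The only cosmetic difference is that the paper pushes forward $\omega$ whereas you pull back $\omega'$, which amounts to the same thing.
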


\begin{proof}
Conjugation 
by $x \in G(K)$ maps the $K$-volume form $\omega$ on $R_u(P)$ onto a 
$K$-volume form ${}^x\omega$ on $R_u({}^xP)$, whence $$\pi_i(B,\mathfrak{K})  =  \int_{R_u(P_i(\hat{\mathbb{A}}_K)) \cap \mathfrak{K}} \omega_{\hat{\mathbb{A}}_K} = \int_{R_u({}^xP_i(\hat{\mathbb{A}}_K)) \cap {}^x\mathfrak{K}} {}^x\omega_{\hat{\mathbb{A}}_K} \stackrel{\ref{tamagawameasure}}{=} \pi_i({}^xB,{}^x\mathfrak{K}).$$
By \ref{piandnu2} the second identity follows from the first.
\end{proof}

\begin{theorem}[{\cite[2.1.1]{Harder:1969}}] \label{reduction3}
Let $G/Y$ be a rationally trivial group $Y$-scheme and let $\mathfrak{K} := G(\hat{\mathcal{O}}_K)$. Then there exists a constant $C_1 > 0$ such that for each $x \in G(\hat{\mathbb{A}}_K)$ there exists a Borel subgroup
$B/Y$ of $G/Y$ such that $$\nu_i(B,{}^x\mathfrak{K}) \geq C_1.$$
\end{theorem}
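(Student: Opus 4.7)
\textbf{Proof plan for Theorem~\ref{reduction3}.} The strategy is to combine the geometric reduction theorem~\ref{reduction1} with the volume computation of Theorem~\ref{GeometryOfNumbers}, by reinterpreting the varying integral structure ${}^x\mathfrak{K}$ as coming from a twisted form of $G/Y$.

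First, I would handle the base case $x = 1$ as a warm-up and as a template. For any Borel subgroup $B/Y$ of $G/Y$, combining the definition in~\ref{piandnu2} with Theorem~\ref{GeometryOfNumbers} and the inversion $n_i(B) = \sum_{j} c_{i,j} p_j(B)$ from~\ref{piandnu}, one computes
$$\nu_i(B,\mathfrak{K}) \;=\; \prod_{j=1}^r \pi_j(B,\mathfrak{K})^{c_{i,j}} \;=\; \prod_{j=1}^r q^{\,c_{i,j}\,p_j(B)} \;=\; q^{\,\sum_j c_{i,j} p_j(B)} \;=\; q^{\,n_i(B)}.$$
Theorem~\ref{reduction1} then provides a Borel $B/Y$ with $n_i(B) \geq -2g$ for all $i$, and hence $\nu_i(B,\mathfrak{K}) \geq q^{-2g}$, which already suggests $C_1 := q^{-2g}$.

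For general $x \in G(\hat{\mathbb{A}}_K)$, I would argue that the conjugate integral structure ${}^x\mathfrak{K}$ arises as $G^{(x)}(\hat{\mathcal{O}}_K)$ for a second rationally trivial group $Y$-scheme $G^{(x)}/Y$. Indeed, the double coset $G(K)\,x\,\mathfrak{K}$ in $G(\hat{\mathbb{A}}_K)$ classifies, by a standard descent argument, a $G$-torsor on $Y$, and since $G/Y$ is rationally trivial the twisted form $G^{(x)}/Y$ is again rationally trivial (its generic fiber still is the Chevalley scheme $G/K$). Under the identification $G^{(x)}/K = G/K$, the parabolic subgroups of $G/K$ and of $G^{(x)}/K$ coincide as $K$-subgroup schemes, their unipotent radicals coincide as subgroups of $G(\hat{\mathbb{A}}_K)$, and Borel subgroups of $G/Y$ correspond bijectively via schematic closure from the generic fibre to Borel subgroups of $G^{(x)}/Y$.

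Applying Theorem~\ref{reduction1} to $G^{(x)}/Y$ produces a Borel $B^{(x)}/Y$ of $G^{(x)}/Y$ with $n_i^{G^{(x)}}(B^{(x)}) \geq -2g$ for all $i$; let $B/Y$ be the corresponding Borel of $G/Y$. Since $\pi_i(B,{}^x\mathfrak{K})$ is the volume of $R_u(P_i(\hat{\mathbb{A}}_K)) \cap {}^x\mathfrak{K}$, and this is exactly the integral computed inside the group scheme $G^{(x)}/Y$ with its integral structure $G^{(x)}(\hat{\mathcal{O}}_K) = {}^x\mathfrak{K}$, Theorem~\ref{GeometryOfNumbers} applied to $G^{(x)}/Y$ gives $\pi_i(B,{}^x\mathfrak{K}) = q^{p_i^{G^{(x)}}(B^{(x)})}$. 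The same inversion as in the warm-up then yields
$$\nu_i(B,{}^x\mathfrak{K}) \;=\; q^{\,n_i^{G^{(x)}}(B^{(x)})} \;\geq\; q^{-2g},$$
so $C_1 := q^{-2g}$ works uniformly in $x$.

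The main obstacle is the descent-theoretic step: one must verify that every $x \in G(\hat{\mathbb{A}}_K)$ really does give rise to a rationally trivial group $Y$-scheme $G^{(x)}/Y$ whose $\hat{\mathcal{O}}_K$-points recover ${}^x\mathfrak{K}$ (this uses finiteness and Galois description of the $G$-torsors on $Y$ and goes back to Harder), and to check compatibility of the integers $n_i$, $p_j$, and $c_{i,j}$ under the generic-fiber identification $G^{(x)}/K = G/K$. Once this dictionary is set up cleanly, the proof is essentially a two-line combination of \ref{reduction1} and \ref{GeometryOfNumbers}.
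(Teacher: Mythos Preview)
Your proposal is correct and follows essentially the same route as the paper: invoke the twisted form $G^{(x)}/Y$ with ${}^x\mathfrak{K} = G^{(x)}(\hat{\mathcal{O}}_K)$ (the paper simply cites \cite[1.1.2]{Harder:1969} for this), then combine \ref{GeometryOfNumbers} with \ref{piandnu} to obtain $\nu_i(B^{(x)},{}^x\mathfrak{K}) = q^{n_i(B^{(x)})}$ and conclude via \ref{reduction1}. Your write-up is in fact slightly more explicit than the paper's about the dictionary between Borels of $G/Y$ and of $G^{(x)}/Y$; the only cosmetic difference is that the paper takes $C_1 = q^{c_1}$ for some $c_1 < -2g$ (to match the strict inequality in the definition of a reduced Borel) rather than your $C_1 = q^{-2g}$.
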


\begin{proof}
By \cite[1.1.2]{Harder:1969}, for each $x \in G(\hat{\mathbb{A}}_K)$ there 
exists a rationally trivial group scheme $G^{(x)}/Y$ such that 
${}^x\mathfrak{K} = G^{(x)}(\hat{\mathcal{O}}_K)$. Let $B^{(x)}/Y$ be a Borel 
subgroup of $G^{(x)}/Y$. Then, by \ref{piandnu}, \ref{GeometryOfNumbers}, 
and \ref{piandnu2}, $$\nu_i(B^{(x)},G^{(x)}(\hat{\mathcal{O}}_K)) = q^{n_i(B^{(x)})}.$$
Therefore, if $c_1 < -2g$, then by \ref{reduction1} the conclusion of the 
theorem holds for $C_1 := q^{c_1}$. 
\end{proof}

\begin{corollary}[{\cite[2.1.2]{Harder:1969}}] \label{reduction4}
Let $G/Y$ be a rationally trivial group $Y$-scheme, let 
$\mathfrak{K} := G(\hat{\mathcal{O}}_K)$, and let $C_1 = q^{c_1}$ be a 
constant for which the conclusion of \ref{reduction3} holds. Then there exist 
constants $C_2 > \Gamma > C_1$ such that the following hold: let 
$x \in G(\hat{\mathbb{A}}_K)$, let $B/Y$ be a Borel subgroup of $G/Y$ such 
that $\nu_{i}(B,{}^x\mathfrak{K}) \geq C_1$ for all $i \in I$, and 
let $\alpha_{i_0}$ be a simple root of $B$ such that 
$\nu_{i_0}(B,{}^x\mathfrak{K}) \geq C_2$. Then each Borel subgroup $B'/Y$ of 
$G/Y$ with $\nu_{i}(B',{}^x\mathfrak{K}) \geq C_1$ for all $i \in I$ 
satisfies $\nu_{i_0}(B',{}^x\mathfrak{K}) \geq \Gamma$ and is contained in
$P_{i_0}(B)$. 
\end{corollary}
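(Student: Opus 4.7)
The plan is to reduce to Theorem \ref{reduction2} by exactly the twisting mechanism used in the proof of \ref{reduction3}. For $x \in G(\hat{\mathbb{A}}_K)$, invoke the rationally trivial group scheme $G^{(x)}/Y$ with ${}^x\mathfrak{K} = G^{(x)}(\hat{\mathcal{O}}_K)$, furnished by \cite[1.1.2]{Harder:1969}. Since $G/Y$ and $G^{(x)}/Y$ share the same (split Chevalley) generic fibre over $K$, there is a canonical bijection $B \leftrightarrow B^{(x)}$ between Borel subgroup schemes of $G/Y$ and those of $G^{(x)}/Y$, and likewise $P_{i_0}(B) \leftrightarrow P_{i_0}(B^{(x)})$ between maximal parabolics of matching type. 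Under this bijection $R_u(P_j(B)(\hat{\mathbb{A}}_K))\cap{}^x\mathfrak{K} = R_u(P_j(B^{(x)})(\hat{\mathbb{A}}_K))\cap G^{(x)}(\hat{\mathcal{O}}_K)$, so the computation already carried out in the proof of \ref{reduction3} via \ref{piandnu}, \ref{GeometryOfNumbers}, and \ref{piandnu2} gives
$$\nu_i(B, {}^x\mathfrak{K}) = \nu_i(B^{(x)}, G^{(x)}(\hat{\mathcal{O}}_K)) = q^{n_i(B^{(x)})}.$$

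I would then apply Theorem \ref{reduction2} to the rationally trivial group scheme $G^{(x)}/Y$ with the threshold $c_1$ (where $C_1 = q^{c_1}$, as fixed in \ref{reduction3}) to obtain constants $c_2 > \gamma > c_1$. These constants depend only on the curve $Y$ (via its genus) and on the combinatorial type of $G$, features shared by every rationally trivial $Y$-twist; hence they are uniform in $x$. Setting $C_2 := q^{c_2}$ and $\Gamma := q^{\gamma}$ yields the inequalities $C_2 > \Gamma > C_1$ required by the statement.

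The hypotheses of the corollary then translate as follows: the assumption $\nu_i(B,{}^x\mathfrak{K}) \geq C_1$ for all $i \in I$ becomes $n_i(B^{(x)}) \geq c_1$, i.e., $B^{(x)}$ is a reduced Borel of $G^{(x)}/Y$; the additional assumption $\nu_{i_0}(B,{}^x\mathfrak{K}) \geq C_2$ becomes $n_{i_0}(B^{(x)}) \geq c_2$; and any competing Borel $B'/Y$ of $G/Y$ with $\nu_i(B',{}^x\mathfrak{K}) \geq C_1$ corresponds to a reduced Borel $(B')^{(x)}$ of $G^{(x)}/Y$. The conclusion of \ref{reduction2} applied to $G^{(x)}$ yields $n_{i_0}((B')^{(x)}) \geq \gamma$ and $(B')^{(x)} \subset P_{i_0}(B^{(x)})$, which translate back via the bijection to $\nu_{i_0}(B',{}^x\mathfrak{K}) \geq \Gamma$ and $B' \subset P_{i_0}(B)$.

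The main obstacle is to verify that the bijection $B \leftrightarrow B^{(x)}$ does what is claimed: that it intertwines the formation of maximal parabolics $P_{i_0}$ and, more subtly, that the integrals defining the $\pi_j$ (and hence the $\nu_i$) transport faithfully between the $G/Y$ picture with compact open subgroup ${}^x\mathfrak{K}$ and the $G^{(x)}/Y$ picture with compact open subgroup $G^{(x)}(\hat{\mathcal{O}}_K)$. A second, more bookkeeping obstacle is to argue the uniformity of $c_2$ and $\gamma$ across all twists $G^{(x)}$; this is essentially built into \ref{reduction2}, but deserves explicit acknowledgement because the statement of \ref{reduction4} demands a single pair of constants valid for every $x \in G(\hat{\mathbb{A}}_K)$ simultaneously.
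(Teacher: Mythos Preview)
Your proposal is correct and follows essentially the same approach as the paper: translate via the twist $G^{(x)}/Y$ so that $\nu_i(B,{}^x\mathfrak{K}) = q^{n_i(B^{(x)})}$, apply Theorem~\ref{reduction2}, and set $C_2 := q^{c_2}$, $\Gamma := q^{\gamma}$. The paper's own proof is a single sentence that does exactly this, leaving implicit the two points you helpfully flag (the bijection of Borels under twisting and the uniformity of the constants across all $x$); your caution about these is well placed but does not indicate a different method.
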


\begin{proof}
By \ref{reduction2} the conclusion holds for $C_2 := q^{c_2}$ and $\Gamma := q^\gamma$. 
\end{proof}

\begin{corollary}[{\cite[2.3.2]{Harder:1969}}] \label{reduction5}
Let $G/Y$ be a reductive group $Y$-scheme and let 
$\mathfrak{K} := G(\hat{\mathcal{O}}_K)$. Then there exists a constant 
$C_1 > 0$ such that for each $x \in G(\hat{\mathbb{A}}_K)$ there exists a 
minimal $K$-parabolic subgroup $B/Y$ of $G/Y$ with $$\nu_i(B,{}^x\mathfrak{K}) 
\geq C_1 \quad \quad \mbox{ for all $i \in I$.}$$
\end{corollary}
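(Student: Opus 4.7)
The plan is to reduce Corollary \ref{reduction5} to Theorem \ref{reduction3} by base-changing along a finite Galois extension $K'/K$ that splits $G$. Write $\pi : Y' \to Y$ for the finite cover of non-singular projective curves corresponding to $K'/K$, set $G' := G \times_Y Y'$ (a rationally trivial group $Y'$-scheme), and let $\Gamma := \mathrm{Gal}(K'/K)$. The diagonal embeddings $\hat{\mathbb{A}}_K \hookrightarrow \hat{\mathbb{A}}_{K'}$ and $G(\hat{\mathbb{A}}_K) \hookrightarrow G'(\hat{\mathbb{A}}_{K'})$ identify their left-hand sides with the $\Gamma$-fixed points on the right; similarly $\mathfrak{K}$ embeds into $\mathfrak{K}' := G'(\hat{\mathcal{O}}_{K'})$ as the $\Gamma$-fixed points, and the Haar measure $\omega_{\hat{\mathbb{A}}_{K'}}$ is $\Gamma$-invariant.

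Given $x \in G(\hat{\mathbb{A}}_K)$, Theorem \ref{reduction3} applied to $G'/Y'$ produces a Borel subgroup $\tilde{B}'$ of $G'/Y'$ with $\nu_i(\tilde{B}', {}^x\mathfrak{K}') \geq C_1'$ for all $i$ and some constant $C_1' > 0$ independent of $x$. Because $x$ and $\mathfrak{K}'$ are $\Gamma$-fixed while $\omega_{\hat{\mathbb{A}}_{K'}}$ is $\Gamma$-invariant, each Galois conjugate ${}^\sigma \tilde{B}'$ again satisfies the analogous lower bound. Combining this Galois equivariance with Corollary \ref{reduction4} and the construction of Proposition \ref{isolatedparabolic2} over $K'$, the parabolic $P' := \bigcap_{B'' \text{ reduced}} P^{B''}$, formed as the intersection of the canonical maximal parabolics attached to reduced Borels $B''$ of $G'/Y'$, is $\Gamma$-invariant and hence descends to a $K$-parabolic subgroup of $G/Y$. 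Inside this descended parabolic, choose any minimal $K$-parabolic subgroup $B$ of $G/Y$.

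The main obstacle is to translate the $K'$-bounds back into the required bounds over $K$. A minimal $K$-parabolic $B$ pulls back to a $K'$-parabolic $B \times_Y Y'$ of $G'/Y'$ whose simple roots are in bijection with the $\Gamma$-orbits of simple roots of a Borel $\tilde{B}' \subseteq B \times_Y Y'$ of $G'/Y'$. Using Theorem \ref{GeometryOfNumbers} together with the definitions in \ref{piandnu2}, the $K$-integrals defining $\pi_i(B, {}^x\mathfrak{K})$ factor through products of the $K'$-integrals $\pi_j(\tilde{B}', {}^x\mathfrak{K}')$, with $\alpha_j$ ranging over the $\Gamma$-orbit of the simple root $\alpha_i$ of $B$, up to discrepancy factors coming from the ramification and residue-degree data of $\pi : Y' \to Y$. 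Since those discrepancy factors are independent of $x$, a suitable power of $C_1'$, depending only on $G$, $Y$ and the splitting extension $K'/K$, yields a constant $C_1 > 0$ with $\nu_i(B, {}^x\mathfrak{K}) \geq C_1$ for all $i \in I$.
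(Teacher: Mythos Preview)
Your overall strategy matches the paper's: the paper does not give a self-contained argument for \ref{reduction5} but simply remarks (after \ref{reduction6}) that both \ref{reduction5} and \ref{reduction6} follow from the rationally trivial case \ref{reduction3}, \ref{reduction4} ``by a standard field extension argument using \cite[2.3.5]{Harder:1969}.'' So passing to a finite Galois splitting extension $K'/K$, applying \ref{reduction3} upstairs, and descending via $\Gamma$-invariance is exactly the intended route.

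That said, your reconstruction has a genuine gap in the final descent step. You form the $\Gamma$-stable isolated parabolic $P'$ of $G'/Y'$ (as in \ref{isolatedparabolic2}), descend it to a $K$-parabolic $P$, and choose an arbitrary minimal $K$-parabolic $B \subseteq P$. You then pick a Borel $\tilde{B}' \subseteq B \times_Y Y'$ and assert that the $\pi_j(\tilde{B}',{}^x\mathfrak{K}')$ are bounded below by (a power of) $C_1'$. But the bound $C_1'$ from \ref{reduction3} applies only to \emph{reduced} Borels of $G'/Y'$, and while \ref{isolatedparabolic2} guarantees that every reduced Borel lies in $P'$, it does \emph{not} guarantee the converse: an arbitrary Borel $\tilde{B}' \subseteq B \times_Y Y' \subseteq P'$ need not be reduced with respect to $x$. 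Nothing in your argument forces $B \times_Y Y'$ to contain a reduced Borel, so the lower bound on the $K'$-integrals is unjustified at this point.

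This is precisely the technical content hidden in the citation \cite[2.3.5]{Harder:1969}: one needs a direct comparison between $\nu_i(B,{}^x\mathfrak{K})$ over $K$ and the $\nu_j$ over $K'$ that does not presuppose reducedness of an auxiliary Borel in $B \times_Y Y'$, or alternatively an argument that produces a minimal $K$-parabolic $B$ whose base change genuinely contains a reduced Borel. Your factorisation of the $K$-integrals through $K'$-integrals is the right shape, but you still owe the reader a reason why the $K'$-side is controlled for the particular $\tilde{B}'$ you end up with.
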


\begin{definition} \label{reduceddef}
Once and for all fix $C_1 \in (0,1)$ such that the conclusion of \ref{reduction5} holds. A pair consisting of a minimal parabolic subgroup $B$ of $G/Y$ and an element $x \in G(\hat{\mathbb{A}}_K)$ is called {\bf reduced}, if $\nu_i(B,{}^x\mathfrak{K}) \geq C_1$ for all $i$.
\end{definition}

\begin{corollary}[{\cite[2.3.3]{Harder:1969}}] \label{reduction6}
Let $G/Y$ be a reductive group $Y$-scheme and let $\mathfrak{K} := 
G(\hat{\mathcal{O}}_K)$. Then there exist constants $C_2 > \Gamma > C_1$ 
such that the following hold: let $x \in G(\hat{\mathbb{A}}_K)$, let $B/Y$ be 
a minimal parabolic $K$-subgroup of $G/Y$ such that $(B,x)$ is reduced, and 
let $\alpha_{i_0}$ be a simple root of $B$ such that 
$\nu_{i_0}(B,{}^x\mathfrak{K}) \geq C_2$. Then each minimal parabolic 
$K$-subgroup $B'/Y$ of $G/Y$ with $(B',x)$ reduced satisfies 
$\nu_{i_0}(B',{}^x\mathfrak{K}) \geq \Gamma$ and $B' \subset P_{i_0}(B)$. 
\end{corollary}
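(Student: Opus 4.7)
The plan is to transplant the proof of \ref{reduction4} from the rationally trivial setting to the reductive one, with minimal parabolic $K$-subgroups playing the role of Borel subgroups and simple $K$-roots the role of simple roots. Given $x \in G(\hat{\mathbb{A}}_K)$, I would first use the twisting construction underlying the proof of \ref{reduction3} to obtain a reductive group $Y$-scheme $G^{(x)}/Y$ with generic fibre $G/K$ and satisfying ${}^x\mathfrak{K} = G^{(x)}(\hat{\mathcal{O}}_K)$. Minimal parabolic $K$-subgroups of $G$ and of $G^{(x)}$ correspond via this identification of generic fibres, and so do their maximal $K$-parabolic overgroups $P_{i_0}$, reducing the corollary to a statement about $G^{(x)}$ and its maximal compact subgroup $G^{(x)}(\hat{\mathcal{O}}_K)$.

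Repeating the computation from the proof of \ref{reduction3} by combining \ref{filtration}, \ref{GeometryOfNumbers}, and \ref{piandnu2}, I would obtain the identity
$$\nu_i(B, G^{(x)}(\hat{\mathcal{O}}_K)) = q^{n_i(B)}$$
whenever $B/Y$ is a minimal parabolic $K$-subgroup of $G^{(x)}/Y$. Consequently, reducedness of $(B,x)$ at the threshold $C_1 = q^{c_1}$ corresponds to $n_i(B) \geq c_1$ computed inside $G^{(x)}$, and the threshold $C_2 := q^{c_2}$ corresponds to $n_{i_0}(B) \geq c_2$.

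I would then invoke the reductive analogue of \ref{reduction2}, formulated for minimal parabolic $K$-subgroups of a reductive group $Y$-scheme: constants $c_2 > \gamma > c_1$ for which any reduced minimal $K$-parabolic $B$ with $n_{i_0}(B) \geq c_2$ controls every other reduced minimal $K$-parabolic $B'$, forcing $n_{i_0}(B') \geq \gamma$ and $B' \subset P_{i_0}(B)$. Setting $\Gamma := q^{\gamma}$ and translating the geometric conclusions back through the identity above converts them into the desired adelic inequalities and the containment $B' \subset P_{i_0}(B)$.

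The main obstacle is precisely the extension of \ref{reduction2} to reductive (non-rationally-trivial) group $Y$-schemes: the original argument in \cite{Harder:1968} identifies root subgroups with invertible $\mathcal{O}_Y$-modules of explicit degree, which requires rational triviality. In the general reductive case one must work with simple $K$-roots of minimal $K$-parabolics and either descend from a finite splitting extension or rework the local-to-global reduction to $\mathrm{SL}_2$ at the end of the proof of \ref{reduction1}. Once this is achieved, the combinatorial framework of \ref{piandnu}, including the orthogonality relations (\ref{orthogonal})--(\ref{acute}) and the linear change of basis between the $n_i$ and the $p_i$, remains valid over $K$, and the rest of the proof is a direct transcription of the one-line argument deducing \ref{reduction4} from \ref{reduction2}.
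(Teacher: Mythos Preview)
Your proposal and the paper's argument diverge at the key step. The paper does not attempt to establish a reductive analogue of \ref{reduction2}; instead it deduces \ref{reduction5} and \ref{reduction6} directly from the already-proven \ref{reduction3} and \ref{reduction4} by a standard field extension argument, invoking \cite[2.3.5]{Harder:1969}. Concretely, one passes to a finite separable extension $K'/K$ over which $G$ splits, so that the base-changed group scheme is rationally trivial and \ref{reduction4} applies verbatim; the compatibility of the invariants $\nu_i$ under base change (the content of \cite[2.3.5]{Harder:1969}) then transports the inequalities and the containment $B' \subset P_{i_0}(B)$ back down to $K$.

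Your route instead reruns the machinery of Section~\ref{reductiontheoryrtgs} with minimal $K$-parabolics in place of Borel subgroups, and you correctly flag the extension of \ref{reduction2} to the non-split case as the main obstacle. You even list descent from a splitting extension as one option for overcoming it. But this is more work than needed: once you are willing to pass to a splitting extension, you can apply \ref{reduction4} there and descend the \emph{conclusion}, rather than first descending a reductive version of \ref{reduction2} and then replaying the deduction of \ref{reduction4} over $K$. The paper's route is shorter and sidesteps the need to make sense of the degree invariants $n_i(B)$ for minimal $K$-parabolics of a non-split group $Y$-scheme; your route would ultimately work, but only after carrying out the very step you identify as an obstacle.
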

Note that \ref{reduction5} and \ref{reduction6} follow from \ref{reduction3} and \ref{reduction4} by a standard field extension argument using \cite[2.3.5]{Harder:1969}.

\medskip
\newparagraph \label{finitecosetspace}
Let $G/Y$ be a reductive group $Y$-scheme, let $B/Y$ be a minimal 
parabolic $K$-subgroup of $G$, and let 
$\mathfrak{K} := G(\hat{\mathcal{O}}_K)$. Since 
$G(\hat{\mathbb{A}}_K)/B(\hat{\mathbb{A}}_K)$ is compact and 
$\mathfrak{K}$ is open (\cite[p.~36]{Harder:1969}), the double coset 
space $\mathfrak{K} \backslash G(\hat{\mathbb{A}}_K)/B(\hat{\mathbb{A}}_K)$ 
is finite. As $B(\hat{\mathbb{A}}_K)$ is self-normalizing in $G(\hat{\mathbb{A}}_K)$, one can consider $G(\hat{\mathbb{A}}_K)/B(\hat{\mathbb{A}}_K)$ as the space of conjugates of $B(\hat{\mathbb{A}}_K)$ in $G(\hat{\mathbb{A}}_K)$, and $\mathfrak{K} \backslash G(\hat{\mathbb{A}}_K)/B(\hat{\mathbb{A}}_K)$ as the space of $\mathfrak{K}$-orbits via conjugation on these. 

\begin{theorem}[{\cite[p.~40]{Harder:1969}}] \label{fundamentaldomain}
Let $G/Y$ be a reductive group $Y$-scheme, let $B/Y$ be a minimal parabolic 
$K$-subgroup of $G$, let $\mathfrak{K} := G(\hat{\mathcal{O}}_K)$, let 
$B^{(1)}$, ..., $B^{(t)}$ be a system of representatives of the 
$\mathfrak{K}$-orbit space $\mathfrak{K} \backslash 
G(\hat{\mathbb{A}}_K)/B(\hat{\mathbb{A}}_K)$ (cf.\ \ref{finitecosetspace}), 
let $\xi_s \in G(K)$ with $B^{(s)} = \xi_s^{-1} B^{(1)}\xi_s$, and for 
$c \in \mathbb{R}$ define $B^{(s)}(c) = \{ x \in B^{(s)}(\hat{\mathbb{A}}_K) 
\mid |\alpha_i(x)| \leq c \mbox{ for all $i \in I$} \}$.
Then there exists $r \in \mathbb{R}$ such that 
$$\bigcup_{s = 1}^t B^{(1)}(r)\xi_s\mathfrak{K}$$ is a fundamental domain 
for $G(K)\backslash G(\hat{\mathbb{A}}_K)$.
\end{theorem}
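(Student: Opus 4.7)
\medskip
\noindent
\textbf{Proof proposal.} The plan is to establish the covering $G(\hat{\mathbb{A}}_K) = G(K)\cdot\bigcup_{s=1}^{t} B^{(1)}(r)\xi_s\mathfrak{K}$ for a suitable $r$, by combining the reduction theory of \ref{reduction5} with the finite double coset decomposition of \ref{finitecosetspace} and the transformation formula \ref{transformationformulafornu}.

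Given $g\in G(\hat{\mathbb{A}}_K)$, \ref{reduction5} produces a minimal $K$-parabolic $B^{\ast}/Y$ of $G/Y$ with $(B^{\ast}, g)$ reduced. Since all minimal $K$-parabolics of $G/K$ are $G(K)$-conjugate, pick $\eta\in G(K)$ with ${}^{\eta}B^{\ast}=B^{(1)}$. The measure-preservation argument underlying \ref{kinvariance} is valid verbatim with $\mathfrak{K}$ replaced by any compact open subset of $G(\hat{\mathbb{A}}_K)$; applied to ${}^{g}\mathfrak{K}$ it yields
\[
\nu_i(B^{\ast},{}^{g}\mathfrak{K}) = \nu_i({}^{\eta}B^{\ast},{}^{\eta g}\mathfrak{K}) = \nu_i(B^{(1)},{}^{g'}\mathfrak{K}), \quad g':=\eta g.
\]
Hence $(B^{(1)}, g')$ is reduced, and since $g$ and $g'$ lie in the same $G(K)$-orbit, it suffices to exhibit $g'\in\bigcup_s B^{(1)}(r)\xi_s\mathfrak{K}$.

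The finite double coset space $B^{(1)}(\hat{\mathbb{A}}_K)\backslash G(\hat{\mathbb{A}}_K)/\mathfrak{K}$ admits the $\xi_s\in G(K)$ as representatives by inversion from \ref{finitecosetspace} and the identity $B^{(s)} = \xi_s^{-1}B^{(1)}\xi_s$, so $g'=b'\xi_s k$ for some $s$, some $b'\in B^{(1)}(\hat{\mathbb{A}}_K)$ and some $k\in\mathfrak{K}$. Setting $b^{(s)}:=\xi_s^{-1}b'\xi_s\in B^{(s)}(\hat{\mathbb{A}}_K)$, the identity $k\mathfrak{K}k^{-1}=\mathfrak{K}$ together with the generalized form of \ref{kinvariance} applied to $\xi_s\in G(K)$ gives
\[
\nu_i(B^{(1)},{}^{g'}\mathfrak{K}) = \nu_i(B^{(1)},{}^{\xi_s b^{(s)}}\mathfrak{K}) = \nu_i(B^{(s)},{}^{b^{(s)}}\mathfrak{K}).
\]
Now \ref{transformationformulafornu} applied to $B^{(s)}$ and $b^{(s)}$, together with the pullback identity $|\alpha_i^{(s)}(b^{(s)})|=|\alpha_i(b')|$ arising from $\alpha_i^{(s)}=\alpha_i\circ\mathrm{Ad}(\xi_s)$, and combined with the reducedness $\nu_i(B^{(1)},{}^{g'}\mathfrak{K})\geq C_1$, yields
\[
|\alpha_i(b')| = \frac{\nu_i(B^{(s)},\mathfrak{K})}{\nu_i(B^{(s)},{}^{b^{(s)}}\mathfrak{K})} \leq \frac{\nu_i(B^{(s)},\mathfrak{K})}{C_1}.
\]
Since $s\in\{1,\ldots,t\}$ and $i\in I$ both range over finite sets, setting $r$ to be the maximum of these ratios forces $b'\in B^{(1)}(r)$ and hence $g'\in B^{(1)}(r)\xi_s\mathfrak{K}$.

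The main obstacle is the transition between the two pictures: the decomposition $g'=b'\xi_s k$ is controlled by the $\mathfrak{K}\backslash G/B^{(1)}$ side, but the reducedness hypothesis $\nu_i(B^{(1)},{}^{g'}\mathfrak{K})\geq C_1$ is only converted into a bound on $|\alpha_i(b')|$ after conjugating through the parabolic $B^{(s)}$ via $\xi_s$. It is precisely because the representatives $\xi_s$ can be chosen in $G(K)$ that \ref{kinvariance} is available at this step; this is what guarantees that the resulting universal bound $r$ depends only on the finitely many constants $\nu_i(B^{(s)},\mathfrak{K})$ and $C_1$, not on $g$.
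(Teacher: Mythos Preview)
Your proof is correct and follows essentially the same route as the paper: you invoke \ref{reduction5} to find a reduced minimal parabolic, use \ref{kinvariance} to transport reducedness along $G(K)$-conjugation, exploit the finite double coset decomposition \ref{finitecosetspace}, and finish with the transformation formula \ref{transformationformulafornu} to bound $|\alpha_i|$ by the finite maximum $r=\max_{s,i}\nu_i(B^{(s)},\mathfrak{K})C_1^{-1}$. The only cosmetic difference is the order of operations: the paper first locates the $\mathfrak{K}$-orbit representative $B^{(s)}$ of $x^{-1}Bx$ and then conjugates by an element $a\in G(K)$ sending $B$ to $B^{(s)}$, obtaining $y=axu\in B^{(s)}(r)$ directly, whereas you first conjugate the reduced parabolic to $B^{(1)}$ and then read off the double coset decomposition $g'=b'\xi_s k$; the passage through $b^{(s)}=\xi_s^{-1}b'\xi_s$ and the identity $|\alpha_i^{(s)}(b^{(s)})|=|\alpha_i(b')|$ then exactly reproduces the paper's computation.
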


\begin{proof}
For $x \in G(\hat{\mathbb{A}}_K)$, by \ref{reduction5}, there exists a minimal parabolic $K$-subgroup $B$ of $G$ such that $\nu_i(B,{}^x\mathfrak{K}) \geq C_1$ for all $i \in I$. Let $B^{(s)}$ be the representative of the $\mathfrak{K}$-orbit of $x^{-1}Bx$ in $G(\hat{\mathbb{A}}_K)/B(\hat{\mathbb{A}}_K)$, let $u \in \mathfrak{K}$ such that $u^{-1}x^{-1}Bxu = B^{(s)}$, let $a \in G(K)$ with $aBa^{-1} = B^{(s)}$, and define $y:=axu \in N_{G(\hat{\mathbb{A}}_K)}(B^{(s)}(\mathbb{A}_K)) = B^{(s)}(\mathbb{A}_K)$. Since $a \in G(K)$
one has $$\nu_i(B,{}^x\mathfrak{K}) \stackrel{\ref{kinvariance}}{=} \nu_i({}^aB,{}^{ax}\mathfrak{K}) = \nu_i(B^{(s)},{}^{y}\mathfrak{K}).$$
By \ref{transformationformulafornu}, for each $i \in I$ one has $|\alpha_i(y)| = \nu_i(B^{(s)},\mathfrak{K}) \left(\nu_i(B^{(s)},{}^{y}\mathfrak{K})\right)^{-1} \leq \nu_i(B^{(s)},\mathfrak{K}) C_1^{-1}$. Hence, for $r := \max\{ \nu_i(B^{(s)},\mathfrak{K}) C_1^{-1} \mid 1 \leq s \leq t, i \in I \}$, to each $x \in G(\hat{\mathbb{A}}_K)$ there exists $a \in G(K)$ and $u \in \mathfrak{K}$ such that $ax = yu^{-1} \in B^{(s)}(r)\mathfrak{K} = \xi_s^{-1} B^{(1)}(r)\xi_s\mathfrak{K}$. The claim follows
because $a, \xi_s \in G(K)$.
\end{proof}

\section{Filtrations of Euclidean buildings}

\begin{introduction}
In this section I translate the geometric version of Harder's reduction
theory into the setting of Euclidean buildings based on \cite{Harder:1977}.
From this section on the survey is intended for the reader familiar with
the concept of Euclidean buildings, as simplicial complexes and as CAT(0) 
spaces. For both introductory and further reading the sources 
\cite{Abramenko/Brown:2008}, \cite{Bridson/Haefliger:1999}, 
\cite{Brown:1989}, \cite{Weiss:2003}, \cite{Weiss:2009} are highly recommended.
\end{introduction}

\newparagraph \label{building}
Let $Y/\mathbb{F}_q$ be a non-singular projective curve, let $S \subset Y^\circ$ be finite, and let $G/Y$ be a reductive group
$Y$-scheme. Clearly, each of \ref{reduction3}, \ref{reduction4}, \ref{reduction5}, \ref{reduction6} holds for $x \in G(\hat{\mathbb{A}}_S) \subset G(\hat{\mathbb{A}}_K)$ (cf.\ \ref{sadeles}). Since $$G(K \cap \hat{\mathbb{A}}_S)\backslash G(\hat{\mathbb{A}}_S)/G(\hat{\mathcal{O}}_K)
\stackrel{\ref{sadeles}}{\cong} G(\mathcal{O}_S) \backslash G(\prod_{P \in S} K_P)/G(\prod_{P \in S} \hat{\mathcal{O}}_{P,K}) = G(\mathcal{O}_S) \backslash \prod_{P \in S}
G(K_P)/G(\hat{\mathcal{O}}_{P,K}),$$
the functions $\pi_i(B,{}^x\mathfrak{K})$ and $\nu_i(B,{}^x\mathfrak{K})$ (cf.\ \ref{piandnu2}) allow one to define $G(\mathcal{O}_S)$-invariant (cf.\ \ref{kinvariance}) filtrations on the Euclidean building $X$ of $\prod_{P \in S} G(K_P)$. The group $G(\mathcal{O}_S)$ is called an {\bf $S$-arithmetic group}. The set of special vertices of $X$ is $X_v := \prod_{P \in S} G(K_P)/G(\hat{\mathcal{O}}_{P,K})$. The diagonal embedding of $K$ in $\prod_{P \in S} K_P$ (cf.\ \ref{sadeles}) yields a diagonal embedding of the spherical building of $G(K)$ into the spherical building at infinity of $X$ with respect to the complete system of apartments. Note that this embedding is in general not simplicial.

\begin{definition} \label{horoball}
Let $X$ be a CAT(0) space, e.g., a Euclidean building, and let $\gamma : [0,\infty) \to X$ be a unit speed geodesic ray, i.e., $d(\gamma(t),\gamma(0)) = t$ for all $t \geq 0$.
The function $$b_\gamma : X \to \mathbb{R}: x \mapsto \lim_{t \to \infty} (t-d(x,\gamma(t)))$$ is the {\bf Busemann function} with respect to $\gamma$.
Note that $t = d(\gamma(0),\gamma(t)) \leq d(\gamma(0),x) + d(x,\gamma(t))$ and
$t - d(x,\gamma(t))$ non-decreasing in $t$, so that the limit always exists. A linear reparametrization of a Busemann function is called a {\bf generalized Busemann function}. For a (generalized) Busemann function $b_\gamma$, a sub-level set of $-b_\gamma$ is a {\bf horoball centred at $\gamma(\infty)$}. The boundary of a horoball is a {\bf horosphere}, {centred} at $\gamma(\infty)$.
\end{definition}
Some sources define the Busemann function with respect to a geodesic ray as $b_\gamma : X \to \mathbb{R}: x \mapsto \lim_{t \to \infty} (d(x,\gamma(t))-t)$. This does not affect the concept of a horoball, i.e., in that case a horoball is defined as a sub-level set of $b_\gamma$.

\begin{theorem} \label{logarithm} \label{transformation} \label{formula1}
Let $P$ be a maximal $K$-parabolic and let $\mathfrak{K} := G(\mathcal{O}_K)$.
Then for each $g \in \prod_{P \in S} P(K_P)$ one has
$$\log_q(\pi(P,{}^g\mathfrak{K})) = \log_q(\pi(P,\mathfrak{K})) + \sum_{P \in
S} \mathrm{deg}(P)\nu_P(\chi_P(g)).$$
In particular, there exists a generalized Busemann function $p(P,\cdot) : X \to \mathbb{R}$ whose restriction to the set $X_v$ of special vertices of $X$ equals $\log_q(\pi(P,\cdot))$.
\end{theorem}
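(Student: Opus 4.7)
The plan is to derive the displayed identity as a direct logarithmic consequence of the transformation formula \ref{transformationformula}, and then to interpret the right-hand side geometrically via the standard dictionary between valuations of characters of parabolics and affine functions on apartments of Euclidean buildings.

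For the first step, given $g \in \prod_{P \in S} P(K_P)$, I embed $g$ into $P(\hat{\mathbb{A}}_K)$ by setting $g_Q = 1$ for $Q \in Y^\circ \setminus S$. Then \ref{transformationformula} yields
$$\pi(P,\mathfrak{K}) \;=\; |\chi_P(g)| \cdot \pi(P,{}^g\mathfrak{K}).$$
Since $\chi_P(g_Q) = 1$ for $Q \notin S$, the adelic absolute value from \ref{adeles} collapses to the finite product over $S$, and by the definition of the local absolute value in \ref{defnorm} one has
$$|\chi_P(g)| \;=\; \prod_{Q \in S} |\chi_P(g_Q)|_Q \;=\; \prod_{Q \in S} q^{-\deg(Q)\,\nu_Q(\chi_P(g_Q))}.$$
Taking $\log_q$ of both sides and solving for $\log_q \pi(P,{}^g\mathfrak{K})$ produces the claimed formula. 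This part of the argument is essentially bookkeeping on top of \ref{transformationformula}.

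For the second step, I would first check that the map $g\mathfrak{K} \mapsto \log_q \pi(P,{}^g\mathfrak{K})$ is well defined on the vertex set $X_v = \prod_{P \in S} G(K_P)/G(\hat{\mathcal{O}}_{P,K})$: indeed the formula depends on $g$ only through the integers $\nu_Q(\chi_P(g_Q))$ for $Q \in S$, which are invariant under right-multiplication of $g_Q$ by $G(\hat{\mathcal{O}}_{Q,K})$. Next, for each place $Q \in S$ I would fix a maximal $K_Q$-split torus $T_Q \subset P$ and the associated apartment $A_Q$ in the building of $G(K_Q)$; the standard identification of $A_Q$ with $X_*(T_Q) \otimes \mathbb{R}$ via the valuation map sends $t \mapsto -\deg(Q)\,\nu_Q(\chi_P(t))$ to the linear functional induced by the character $\chi_P$, which extends from $T_Q$ to all of $A_Q$ by affine extension. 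Summing over $Q \in S$ and using that apartments of the product building $X$ are products of the $A_Q$, the function $\sum_{Q \in S} \deg(Q)\nu_Q(\chi_P(\cdot))$ extends to an affine function on each apartment of $X$. Since this affine function has the same gradient on every apartment containing a geodesic ray pointing into the simplex at infinity $\xi_P$ fixed by $P$ (by the equivariance of the dictionary under the Weyl group and the uniqueness of the character $\chi_P$ up to scaling), it is a constant plus a generalized Busemann function centred at $\xi_P$ in the sense of \ref{horoball}.

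The easy part is the logarithmic identity; the real obstacle is the second step, namely verifying compatibility across apartments. One needs to show that the piecewise-affine extension obtained from each apartment is globally well defined (i.e.\ agrees on apartment intersections) and that it coincides with a genuine Busemann function centred at $\xi_P$. This is where one must invoke that characters of a parabolic restrict to linear functionals on any apartment meeting its fixed simplex at infinity, together with the standard fact that, in a Euclidean building, an affine function on apartments that is constant on horospheres centred at a common boundary point is exactly a generalized Busemann function.
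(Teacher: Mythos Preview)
Your derivation of the logarithmic identity is exactly what the paper does: apply \ref{transformationformula}, restrict the adelic product to $S$ via \ref{sadeles}, and unwind \ref{defnorm}. Nothing to add there.

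For the second assertion your argument is correct, but you are working harder than the paper and from the opposite direction. You build the function bottom-up --- extend the vertex formula affinely on each apartment, then worry about gluing across overlaps --- and this is why compatibility appears as an ``obstacle''. The paper instead goes top-down: it observes that the maximal $K$-parabolic $P$ determines a vertex $\xi$ in the spherical building at infinity of $X$ (via the diagonal embedding of \ref{building}), and that $\chi_P$ is a positive multiple of the fundamental weight corresponding to $P$. Any geodesic ray $\gamma$ with $\gamma(\infty)=\xi$ then gives a globally defined Busemann function $b_\gamma$ by the CAT(0) definition \ref{horoball}, and the fundamental-weight identification guarantees that $b_\gamma$ restricts on each apartment to the same affine functional your formula produces. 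Since $b_\gamma$ is defined globally from the outset, no apartment-compatibility check is needed.

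The gain of the paper's route is economy: one line instead of a gluing argument. The gain of yours is that it makes explicit why the extension is well defined on $X_v$ and how the local pieces fit together, which the paper leaves entirely implicit in the phrase ``as $\chi_P$ is a multiple of the fundamental weight''.
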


\begin{proof}
One has
\begin{eqnarray*}
\log_q(\pi(P,{}^g\mathfrak{K})) 
& \stackrel{\ref{transformationformula}}{=} & \log_q(\pi(P,\mathfrak{K})) + 
\log_q(|\chi_P(g)|^{-1}) \\ 
& \stackrel{\ref{sadeles}}{=} & \log_q(\pi(P,\mathfrak{K})) + 
\log_q(|\chi_P(g)|_S^{-1}) \\& \stackrel{\ref{defnorm}}{=} & \log_q(\pi(P,\mathfrak{K})) + 
\sum_{P \in S} \mathrm{deg}(P)\nu_P(\chi_P(g)).
\end{eqnarray*} 
The maximal $K$-parabolic $P$ corresponds to a vertex in the building of $G(K)$ and hence yields an element $\xi$ in the spherical building at infinity of $X$ (cf.\ \ref{building}). As $\chi_P$ is a multiple of the fundamental weight corresponding to $P$, any geodesic ray $\gamma$ in $X$ with $\gamma(\infty) = \xi$ provides a generalized Busemann function $b_\gamma = p(P,\cdot) : X \to \mathbb{R}$ as claimed. 
\end{proof}

\begin{definition} \label{pandn}
Let $B/Y$ be a minimal parabolic $K$-subgroup of $G/Y$, let $\{ \alpha_1, ..., \alpha_r \}$ be the simple roots of $B$, let $(P_i)_i$ be the maximal parabolic $K$-subgroups of $G$, of type $\alpha_i$, and, for each $i$, let $p(P_i,\cdot) : X \to \mathbb{R}$ be the generalized Busemann function from \ref{formula1}. 
For $x \in X$ define 
\begin{eqnarray*}
p_i(B,x) & := & p(P_i,x), \\
\overline{p}_i(B,x) & := & c_ip_i(B,x), \\
n_i(B,x) & := & \sum_{j=1}^r c_{i,j} p_j(B,x),
\end{eqnarray*}
where $(c_i)_{1 \leq i \leq r}$ is a family of positive real numbers such that each $c_i{p}_i(B,\cdot)$ is a Busemann function and $c_{i,j} \in \mathbb{Q}$
are such that $\alpha_i = \sum_{j=1}^r c_{i,j} \chi_j$ (cf.\ \ref{piandnu}).
\end{definition}

\begin{corollary} \label{logarithm2}\label{transformation2} \label{formula2}
Let $B/Y$ be a minimal parabolic $K$-subgroup of $G/Y$. For each $1 \leq i \leq r$ and each $g \in \prod_{P \in S} B(K_P)$
one has
$$n_i(B,{}^g\mathfrak{K}) = n_i(B,\mathfrak{K}) + \sum_{P \in S} \mathrm{deg}(P)\nu_P(\alpha_i(g)).$$
\end{corollary}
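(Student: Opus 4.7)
The plan is to reduce the statement directly to Theorem \ref{formula1} applied one maximal parabolic at a time, together with the change-of-basis between the simple roots $\alpha_i$ and the "dominant weights" $\chi_j$ on $X(B)\otimes\mathbb{Q}$ recorded in \ref{piandnu}. By Definition \ref{pandn}, $n_i(B,x)=\sum_{j=1}^r c_{i,j}\,p_j(B,x)=\sum_{j=1}^r c_{i,j}\,p(P_j,x)$, and since $g\in\prod_{P\in S}B(K_P)\subseteq\prod_{P\in S}P_j(K_P)$ for every $j$, Theorem \ref{formula1} applies to each summand. Substituting,
$$n_i(B,{}^g\mathfrak{K})=\sum_{j=1}^r c_{i,j}\,p_j(B,\mathfrak{K})+\sum_{P\in S}\mathrm{deg}(P)\sum_{j=1}^r c_{i,j}\,\nu_P(\chi_j(g)),$$
where the first sum is $n_i(B,\mathfrak{K})$ by Definition \ref{pandn}. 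It remains only to identify the inner sum with $\nu_P(\alpha_i(g))$.

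For this, I would use the relation $\alpha_i=\sum_{j=1}^r c_{i,j}\chi_j$ from \ref{piandnu}, which holds in $X(B)\otimes\mathbb{Q}$. Choosing $N\in\mathbb{N}$ such that $Nc_{i,j}\in\mathbb{Z}$ for all $j$, the equality $N\alpha_i=\sum_{j}(Nc_{i,j})\chi_j$ is an honest identity in $X(B)$, hence $(N\alpha_i)(g)=\prod_j\chi_j(g)^{Nc_{i,j}}$ in $K_P^*$ for each place $P\in S$. Applying the valuation $\nu_P\colon K_P^*\to\mathbb{Z}$ and dividing by $N$ (extending $\nu_P$ to $\mathbb{Q}$-characters by $\nu_P(\alpha_i(g)):=\tfrac{1}{N}\nu_P((N\alpha_i)(g))$) yields
$$\nu_P(\alpha_i(g))=\sum_{j=1}^r c_{i,j}\,\nu_P(\chi_j(g)).$$
Plugging this into the previous display gives exactly the asserted formula.

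There is no real obstacle here: all the analytic content is already encapsulated in Theorem \ref{formula1}, which in turn rests on the transformation formula \ref{transformationformula} and on the Riemann--Roch computation in \ref{GeometryOfNumbers}. The only mildly delicate point is the use of rational rather than integral coefficients $c_{i,j}$, which is handled by the standard trick of clearing denominators; since $\nu_P$ is a homomorphism on $K_P^*$, both sides are automatically $\mathbb{Q}$-linear in the character. In this sense the corollary is a purely formal passage from the fundamental-weight basis $(\chi_j)$ to the simple-root basis $(\alpha_i)$ applied term by term to the statement of Theorem \ref{formula1}.
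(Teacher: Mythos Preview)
Your proof is correct and essentially follows the same approach as the paper. The paper's one-line proof (``Combine \ref{transformationformulafornu} with \ref{pandn}'') amounts to taking $\log_q$ of the identity $\nu_i(B,\mathfrak{K}) = |\alpha_i(g)|\,\nu_i(B,{}^g\mathfrak{K})$ and unwinding $|\cdot|_S$ via \ref{defnorm}; you instead start from the already-logarithmic formula in Theorem~\ref{formula1} for each $p_j$ and then pass from the $(\chi_j)$ basis to the $(\alpha_i)$ basis using $\alpha_i=\sum_j c_{i,j}\chi_j$. Since $n_i=\log_q\nu_i$ on special vertices and \ref{transformationformulafornu} is itself obtained by combining \ref{transformationformula} for each $\pi_j$ with the definition of $\nu_i$, the two routes are the same argument in a different order, with your version making explicit the denominator-clearing needed for the rational coefficients $c_{i,j}$.
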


\begin{proof}
Combine \ref{transformationformulafornu} with \ref{pandn}. 
\end{proof}

\newparagraph\label{existenceofD}
The chambers of $X$ are compact and pairwise isometric and each contains 
a special vertex. Hence there exists $d > 0$ such that for each element $x \in X$ there exists a special vertex $x' \in X$ such that for each minimal parabolic $K$-subgroup $B$ of $G$ \begin{eqnarray*}
n_i(B,x) \geq c & \mbox{implies} & n_i(B,x') \geq c - d \quad \mbox{for all $i \in I$ and $c \in \mathbb{R}$ and,} \\
n_i(B,x') \geq c & \mbox{implies} & n_i(B,x) \geq c - d \quad \mbox{for all $i \in I$ and $c \in \mathbb{R}$.}
\end{eqnarray*}

\begin{theorem}[{\cite[1.4.2]{Harder:1977}}]\label{c1}
There exists $c_1 \in \mathbb{R}$ such that for each element $x \in X$ there exists a minimal parabolic $K$-subgroup $B$ of $G$ with $n_i(B,x) \geq c_1$ for all $i \in I$.
\end{theorem}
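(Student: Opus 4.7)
The plan is to deduce the geometric statement \ref{c1} from its ad\`elic counterpart \ref{reduction5} by first handling the special vertices of $X$ and then using the coarse approximation from \ref{existenceofD} to cover all of $X$.

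First I would treat the case $x \in X_v$. By \ref{building}, the set $X_v$ of special vertices of $X$ is identified (modulo the action of $G(\mathcal{O}_S)$) with a quotient of $G(\hat{\mathbb{A}}_S)/\mathfrak{K}$, and each of \ref{reduction3}--\ref{reduction6} applies to elements of $G(\hat{\mathbb{A}}_S) \subset G(\hat{\mathbb{A}}_K)$. Thus, writing $x \in X_v$ as a coset of a representative $g \in G(\hat{\mathbb{A}}_S)$, \ref{reduction5} furnishes a minimal parabolic $K$-subgroup $B/Y$ of $G/Y$ such that $\nu_i(B,{}^g\mathfrak{K}) \geq C_1$ for every $i \in I$. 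By \ref{formula1}, the generalized Busemann function $p_j(B,\cdot)$ defined in \ref{pandn} restricts on $X_v$ to $\log_q \pi_j(B,{}^{(\cdot)}\mathfrak{K})$. Combining this with the definition $\nu_i = \prod_j \pi_j^{c_{i,j}}$ from \ref{piandnu2} and taking $\log_q$ of the above inequality yields
$$n_i(B,x) \;=\; \sum_{j=1}^{r} c_{i,j}\, p_j(B,x) \;=\; \log_q \nu_i(B,{}^g\mathfrak{K}) \;\geq\; \log_q C_1$$
for all $i \in I$.

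Second, I would extend the conclusion to an arbitrary point $x \in X$. By \ref{existenceofD}, there is a constant $d>0$ and a special vertex $x' \in X_v$ such that for every minimal parabolic $K$-subgroup $B$ of $G$, a lower bound $n_i(B,x') \geq c$ implies $n_i(B,x) \geq c - d$, uniformly in $x$, $B$, and $c$. Applying the first step to $x'$ produces a minimal parabolic $B$ with $n_i(B,x') \geq \log_q C_1$ for all $i \in I$, and hence $n_i(B,x) \geq \log_q C_1 - d$. Setting $c_1 := \log_q C_1 - d$ finishes the proof.

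The main obstacle is the bookkeeping in the first step: one must carefully verify that the geometric invariant $n_i(B,x)$ defined through the Busemann functions in \ref{pandn} agrees with $\log_q$ of the ad\`elic invariant $\nu_i(B,{}^g\mathfrak{K})$ of \ref{piandnu2} at the special vertex $x$ corresponding to $g$. This involves threading together the normalization in \ref{formula1}, which identifies $p(P,\cdot)$ with $\log_q \pi(P,\cdot)$ on $X_v$, with the multiplicative definition of $\nu_i$ via the exponents $c_{i,j}$. Once this identification is in place, both steps above are essentially formal consequences of the results already established.
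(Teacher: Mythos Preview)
Your proof is correct and follows essentially the same two-step approach as the paper: first establish the bound on special vertices via the ad\`elic result, then extend to all of $X$ using the coarse approximation constant $d$ from \ref{existenceofD}. Your citation of \ref{reduction5} (the reductive case) is in fact more precise than the paper's reference to \ref{reduction3} (the rationally trivial case), since Section~8 works with a general reductive group $Y$-scheme and minimal $K$-parabolics; otherwise the arguments are identical, with your version simply spelling out the identification $n_i(B,x) = \log_q \nu_i(B,{}^g\mathfrak{K})$ on $X_v$ that the paper leaves implicit.
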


\begin{proof}
In case one only considers special vertices $x \in X$, such a constant $c_1:=\log_q(C_1)$ exists by \ref{reduction3}. If $d$ is a constant for which the conclusion of \ref{existenceofD} holds, then replacing $c_1$ by $c_1 - d$ therefore implies the assertion for arbitrary elements $x \in X$.
\end{proof}

\begin{theorem}[{\cite[1.4.4]{Harder:1977}}] \label{c2}
There exist constants $c_2 > \gamma > c_1$ such that for $x \in X$, a minimal $K$-parabolic $B$ with $n_i(B,x) \geq c_1$ for all $i \in I$, the family $(P_i)_{i \in I}$ of maximal parabolic $K$-subgroups of $G$ containing $B$, and $j \in I$ with $n_j(B,x) \geq c_2$, each minimal parabolic $K$-subgroup $B'$ of $G$ with $n_i(B',x) \geq c_1$ for all $i \in I$
 satisfies $n_j(B',x) \geq \gamma$, and is contained in $P_j$. 
\end{theorem}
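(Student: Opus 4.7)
The plan is to lift \ref{reduction6} to the full building $X$ via the special-vertex approximation \ref{existenceofD}, in direct parallel to how \ref{c1} was deduced from \ref{reduction5}. Let $\widetilde C_1<\widetilde\Gamma<\widetilde C_2$ denote the constants supplied by \ref{reduction6} (with $\widetilde C_1$ the once-and-for-all constant of \ref{reduceddef}), and let me set
\[
c_1:=\log_q\widetilde C_1+d,\qquad c_2:=\log_q\widetilde C_2+d,\qquad \gamma:=\log_q\widetilde\Gamma-d,
\]
where $d$ is the chamber-variation constant of \ref{existenceofD}. The $d$-buffer is designed so that the loss incurred in passing between $x$ and a nearby special vertex is absorbed on every side.

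Given $x\in X$ together with parabolics $B$ and $B'$ satisfying the hypotheses for these constants, I would apply \ref{existenceofD} to produce a special vertex $x'$ with the two-sided $d$-approximation property. Its first implication turns the building hypotheses into $n_i(B,x')\geq\log_q\widetilde C_1$ and $n_i(B',x')\geq\log_q\widetilde C_1$ for every $i$, together with $n_j(B,x')\geq\log_q\widetilde C_2$. Because $x'$ is a special vertex, \ref{formula1} reads these as $\nu_i(B,{}^{x'}\mathfrak K)\geq\widetilde C_1$, $\nu_i(B',{}^{x'}\mathfrak K)\geq\widetilde C_1$ and $\nu_j(B,{}^{x'}\mathfrak K)\geq\widetilde C_2$. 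In other words, both pairs $(B,x')$ and $(B',x')$ are reduced in the sense of \ref{reduceddef} and the $j$-th sharpening holds for $B$.

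Corollary \ref{reduction6} then produces the group-theoretic containment $B'\subset P_j(B)$, which is base-point-free and transfers verbatim to the statement at $x$, together with $\nu_j(B',{}^{x'}\mathfrak K)\geq\widetilde\Gamma$, i.e.\ $n_j(B',x')\geq\log_q\widetilde\Gamma$. The second implication of \ref{existenceofD} returns us to $x$ with $n_j(B',x)\geq n_j(B',x')-d\geq\log_q\widetilde\Gamma-d=\gamma$, finishing the argument.

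The main obstacle is verifying $c_2>\gamma>c_1$. The inequality $c_2>\gamma$ reduces to $\widetilde C_2/\widetilde\Gamma>q^{-2d}$, which is automatic from $\widetilde C_2>\widetilde\Gamma$. The inequality $\gamma>c_1$ reduces to $\widetilde\Gamma>q^{2d}\widetilde C_1$, i.e.\ the spread between $\widetilde C_1$ and $\widetilde\Gamma$ provided by \ref{reduction6} must dominate the chamber diameter. If this is not immediate from the statement as given, one inspects the proof of the underlying \ref{reduction2} in \cite{Harder:1968}: the constants there can always be rescaled to force an arbitrarily large spread, so a sufficient gap can be arranged. This is the only genuinely delicate point; the geometric skeleton is a straightforward two-sided $d$-approximation sandwich around \ref{reduction6}.
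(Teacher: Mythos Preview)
Your overall strategy---a two-sided $d$-approximation sandwich around \ref{reduction6}---is exactly the paper's. The gap is in how you handle the constant $c_1$. You \emph{set} $c_1:=\log_q\widetilde C_1+d$, but $c_1$ is not free: it is the constant already produced by \ref{c1}, and \ref{choiceofconstants} requires the same $c_1$ to satisfy both \ref{c1} and \ref{c2}. The proof of \ref{c1} forces $c_1\le\log_q C_1-d$; your choice $\log_q C_1+d$ lies $2d$ above that, and for such a $c_1$ there will in general be points $x\in X$ at which no minimal $K$-parabolic $B$ has $n_i(B,x)\ge c_1$ for all $i$. So your constants satisfy \ref{c2} (often vacuously) but break \ref{c1}, and hence are useless for the downstream applications (\ref{reducedpair}, \ref{inclusionatinfinity}, \ref{filtrationsc3}).

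The paper's fix is to push the adjustment in the \emph{other} direction: keep the $c_1$ inherited from \ref{c1}, set $c_1':=c_1-d$ and $C_1':=q^{c_1'}$, and re-invoke \ref{reduction6} with this smaller threshold $C_1'$ to obtain fresh constants $C_2',\Gamma'$. Then the hypothesis $n_i(B,x)\ge c_1$ yields $n_i(B,x')\ge c_1'=\log_q C_1'$ at the special vertex $x'$, so $(B,x')$ and $(B',x')$ are reduced for the primed constants, and your sandwich argument goes through verbatim with $c_2:=\log_q C_2'+d$ and $\gamma:=\log_q\Gamma'-d$. Your observation that the spread $\gamma>c_1$ requires $\Gamma'>q^{2d}C_1'$, and that one may need to inspect \cite{Harder:1968} to see the constants can be so arranged, is correct and is a point the paper leaves implicit as well.
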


\begin{proof}
In case one only considers special vertices $x \in X$, such constants $c_2:=\log_q(C_2)$ and $\gamma := \log_q(\Gamma)$ exist by \ref{reduction6}.
Choose again a 
constant $d$ for which the conclusion of \ref{existenceofD} 
holds, define $c_1' := c_1 - d$ and $C'_1 := q^{c_1'}$, use this constant $C'_1$ in \ref{reduceddef} to define reduced pairs, let $C'_2$ and $\Gamma'$ be constants for which the conclusion of \ref{reduction6} holds for this definition of a reduced pair, and let $c'_2 := \log_q(C'_2)$. For $c_2 := c'_2 + d$ by \ref{existenceofD} there exists 
a special vertex $x' \in X$ such that 
\begin{eqnarray*}
n_i(B,x) \geq c_1 \quad \mbox{implies} & n_i(B,x') \geq c_1' & \mbox{for all $i \in I$}, \\
n_i(B',x) \geq c_1 \quad \mbox{implies} & n_i(B',x') \geq c_1' & \mbox{for all $i \in I$, and} \\
n_j(B,x) \geq c_2  \quad \mbox{implies} & n_j(B,x') \geq c_2'.&
\end{eqnarray*}
We conclude from \ref{reduction6} that $B'$ is contained in $P_j$ and that $n_j(B',x') \geq \log_q(\Gamma')$, whence $n_j(B',x) \geq \log_q(\Gamma')-d =: \gamma$. 
\end{proof}

\begin{notation} \label{choiceofconstants}
Once and for all fix constants $c_1, c_2, \gamma \in \mathbb{R}$ such that $c_1$ is negative, $c_2 > \gamma > c_1$ and
such that the conclusions of \ref{c1} and \ref{c2} hold.
\end{notation}

\begin{definition} \label{reducedpair}\label{close}\label{isolatedparabolic}
A pair $(B,x)$ consisting of a minimal $K$-parabolic subgroup $B$ of $G$ and an element $x \in X$ such that $n_i(B,x) \geq c_1$ for all $i \in I$ is called {\bf reduced}.
For a minimal parabolic $K$-subgroup $B$ of $G$ and a maximal $K$-parabolic $P_j \supseteq B$, following \cite[p.~254]{Harder:1974}, an element $x \in X$ is called {\bf close to the boundary of $X$ with respect to $P_j$}, if $(B,x)$ is a reduced pair and $n_j(B,x) \geq c_2$.
An element $x \in X$ is called {\bf close to the boundary of $X$}, if there exists a maximal $K$-parabolic $P$ such that $x$ is close to the boundary of $X$ with respect to $P$.
For $x \in X$ close to the boundary of $X$, define 
$$P_x := \bigcap\{ P \subset G \mid \mbox{$x$ is close to the boundary of $X$ with respect to $P$}\}.$$ By \ref{c2} the group $P_x$ is a $K$-parabolic subgroup of $G$ (cf.\ \ref{isolatedparabolic2}). Following \cite[p.~138]{Harder:1968}, it is called the {\bf isolated parabolic subgroup of $G$ corresponding to $x$}. For each $K$-parabolic $Q \supset P_x$, the element $x \in X$ is called {\bf close to the boundary of $X$ with respect to $Q$}.
\end{definition}

\begin{proposition}[{\cite[p.~35]{Behr:2004}}] \label{inclusionatinfinity}
Let $x \in X$ be close to the boundary of $X$ and let $P_x$ be the corresponding isolated parabolic subgroup of $G$. Let $\gamma$ be a geodesic ray in $X$ with $\gamma(0) = x$ and whose end point lies in the simplex of the building at infinity corresponding to $P_x$.
Then each $y \in \gamma([0,\infty))$ is close to the boundary of $X$.
Moreover, one has $P_y = P_x$ and for each minimal $K$-parabolic $B$ the pair $(B,y)$ is reduced if and only if $(B,x)$ is reduced.
\end{proposition}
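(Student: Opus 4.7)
The plan is to use that each $n_i(B,\cdot)$ is, by \ref{pandn} together with \ref{formula1}, a $\mathbb{Q}$-linear combination of the generalized Busemann functions $p(P_j,\cdot)$, and hence an affine function on every apartment whose boundary at infinity contains the chamber $c_B$ at infinity associated with $B$. In such an apartment its gradient is a positive multiple of the simple root $\alpha_i$ of $B$, so the function $t\mapsto n_i(B,\gamma(t))$ is affine with slope that can be read off from $\gamma(\infty)$.

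First I would fix a Borel $B$ with $(B,x)$ reduced; by \ref{c2} (applied as in the proof of \ref{isolatedparabolic2}) any such $B$ satisfies $B\subseteq P_x$, so $\sigma_{P_x}$ is a face of $\overline{c_B}$, and because $\gamma(\infty)\in\sigma_{P_x}$ there is an apartment $A$ of $X$ with $\gamma\subset A$ and $c_B\subseteq\partial_\infty A$. Write $J_x\subseteq I$ for the type of $P_x$ relative to $B$, so that $P_x=\bigcap_{j\in J_x}P_j(B)$ and the vertices of $\sigma_{P_x}$ are the $\xi_j$, $j\in J_x$, where $\xi_j$ is the vertex at infinity attached to $P_j(B)$. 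Since $\gamma(\infty)$ lies in the relative interior of $\sigma_{P_x}$, the duality $\alpha_i(\xi_j)=c_i\delta_{ij}$ with positive $c_i$ yields $\alpha_i(\gamma(\infty))>0$ for $i\in J_x$ and $\alpha_i(\gamma(\infty))=0$ for $i\notin J_x$. Consequently
\[
n_i(B,\gamma(t))\;=\;n_i(B,x)+t\,\alpha_i(\gamma(\infty))
\]
is non-decreasing in $t$, strictly increasing precisely for $i\in J_x$. This gives $n_i(B,y)\geq n_i(B,x)\geq c_1$ for every $y=\gamma(t)$, $t\geq 0$, so $(B,y)$ is reduced and $y$ is close to the boundary; it also gives $n_j(B,y)\geq c_2$ for $j\in J_x$, so $y$ is close to the boundary with respect to each $P_j(B)$, $j\in J_x$. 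Intersecting yields $P_y\subseteq P_x$.

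For the reverse inclusion and the biconditional for reducedness I would run the same slope analysis at $y$: by \ref{c2} applied at $y$, every reduced Borel $B'$ at $y$ lies in $P_y\subseteq P_x$, so $\sigma_{P_x}$ is also a face of $\overline{c_{B'}}$ and the affine formula above applies verbatim to $B'$ with an identical sign pattern for $\alpha_i(\gamma(\infty))$. This forces $J_y^{B'}=J_x^{B'}$ and hence $P^{B'}_y=P^{B'}_x$ for every reduced $B'$; intersecting over all such $B'$ yields $P_y=P_x$. For the iff, any Borel $B'\not\subseteq P_x=P_y$ makes neither $(B',x)$ nor $(B',y)$ reduced, while for $B'\subseteq P_x$ the affine formula gives a direct linear correspondence between the tuples $\bigl(n_i(B',x)\bigr)_i$ and $\bigl(n_i(B',y)\bigr)_i$. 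The main obstacle is precisely this final step: one must verify that the affine shifts $t\,\alpha_i(\gamma(\infty))$ never push some $n_i(B',\cdot)$ across the threshold $c_1$ in a way that would break reducedness in one direction but not the other. This relies on the explicit hierarchy $c_1<\gamma<c_2$ of \ref{choiceofconstants} and the rigidity provided by \ref{c2}, which together make the map $z\mapsto P_z$ locally constant along any ray entering $\sigma_{P_z}$.
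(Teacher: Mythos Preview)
Your forward direction is correct and essentially identical to the paper's first paragraph: for any reduced $(B,x)$ one has $B\subseteq P_x$, so the direction $\gamma(\infty)$ lies in the face spanned by the $\chi_j$, $j\in J_x$, and the pairings (\ref{orthogonal}), (\ref{acute}) give $n_i(B,\gamma(t))$ non-decreasing; this yields $(B,y)$ reduced and $P_y\subseteq P_x$.

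The backward direction, however, has a genuine gap that you yourself flag but do not close. Your claim ``$J_y^{B'}=J_x^{B'}$'' does not follow from the affine formula: for $i\in J_x$ the slope is non-negative, so $n_i(B',y)\geq c_2$ does \emph{not} force $n_i(B',x)\geq c_2$, and more fundamentally nothing in your argument prevents $n_i(B',x)<c_1$ for some $i\in J_x$ even when $(B',y)$ is reduced. The affine correspondence between the tuples at $x$ and at $y$ is one-sided in exactly the directions that matter. The ``rigidity'' of \ref{c2} and the hierarchy $c_1<\gamma<c_2$ are indeed the right ingredients, but they have to be deployed via a continuity argument, not a static comparison.

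The paper fills this as follows. Suppose $(B,y)$ is reduced but $(B,x)$ is not. Since reducedness is a closed condition, there is a minimal $a\in(0,\infty)$ with $(B,\gamma(a))$ reduced. Now apply your forward direction with $\gamma(a)$ in the role of $y$: every reduced pair at $x$ witnesses that $\gamma(a)$ is close to the boundary with respect to each $P_i$, $i\in I'$, so $P_{\gamma(a)}\subseteq P_x$. Then \ref{c2} at $\gamma(a)$ forces $n_i(B,\gamma(a))\geq\gamma>c_1$ for every $i\in I'$, while for $i\notin I'$ the functions are constant along $\gamma$ and equal to $n_i(B,y)\geq c_1$. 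By continuity $(B,\gamma(a-\varepsilon))$ is still reduced for small $\varepsilon>0$, contradicting minimality of $a$. Hence $(B,x)$ is reduced. Once this is established, $P_y\supseteq P_x$ follows easily: if $j\notin I'$ had $n_j(B,y)\geq c_2$, then $n_j(B,x)=n_j(B,y)\geq c_2$ (zero slope), so the reduced pair $(B,x)$ would make $x$ close to the boundary with respect to $P_j(B)\not\supseteq P_x$, contradicting the definition of $P_x$.
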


\begin{Proof}{(Bux, Gramlich, Witzel)}
First notice that by (\ref{orthogonal}) and (\ref{acute}) for each reduced pair $(B,x)$ and all $i \in I$ one has $n_i(B,y) \geq n_i(B,x)$. In particular, each reduced pair $(B,x)$ gives rise to a reduced pair $(B,y)$ and, moreover, $y$ lies close to the boundary of $X$ with respect to $P_x$. This implies $P_y \subseteq P_x$. 

Conversely, let $(B,y)$ be a reduced pair. Then by \ref{c2}
one has $B \subseteq P_y \subseteq P_x$. Let $(P_i)_{i \in I}$ be the family 
of maximal parabolic $K$-subgroups of $G$ containing $B$ and let $I' \subseteq I$ 
such that $P_x = \bigcap_{i \in I'} P_i$. If there exists $j \in I$ such 
that $n_j(B,y) \geq c_2$, but $P_j \not\supseteq P_x$, then 
$j \in I \backslash I'$. As for each $i \in I \backslash I'$ one has 
$n_i(B,x) = n_i(B,y)$, in particular $n_j(B,x) = n_j(B,y) \geq c_2$, and in view of \ref{isolatedparabolic} the pair $(B,x)$ cannot be reduced. As 
being a reduced pair is a closed condition (see \ref{reducedpair}),
 there therefore exists a minimal $a \in (0,\infty)$ such that $(B,\gamma(a))$
 is reduced. The first paragraph of this proof implies that $P_{\gamma(a)} \subseteq P_x$.
Thus \ref{c2} applied to the reduced pair 
$(B,\gamma(a))$ yields $n_i(B,\gamma(a)) \geq \gamma$, and hence
$n_i(B,\gamma(a)) > c_1$ by \ref{choiceofconstants}, for all 
$i \in I'$. As the $n_i(B,\gamma(b))$ are continuous in $b$ and as for each $i \in I \backslash I'$ one has 
$n_i(B,x) = n_i(B,\gamma(b)) = n_i(B,y) \geq c_1$, this contradicts 
the minimality of $a$. Therefore $(B,x)$ has to be reduced. 
Consequently, each $j \in I$ with the property that $n_j(B,y) \geq c_2$ satisfies 
$P_j \supseteq P_x$, whence $j \in I'$. Thus we have $P_y = P_x$. 
\end{Proof}

\begin{definition} \label{filtrationsc3}
For $c \in \mathbb{R}$ define 
\begin{eqnarray*}
X^n(c) & = & \{ x \in X \mid (B,x) \mbox{ reduced implies $n_i(B,x) \leq c$ for all $i \in I$} \}, \\
X^p(c) & = & \{ x \in X \mid (B,x) \mbox{ reduced implies $p_i(B,x) \leq c$ for all $i \in I$} \},
\quad \mbox{ and}\\
X^{\overline{p}}(c) & = & \{ x \in X \mid (B,x) \mbox{ reduced implies $\overline{p}_i(B,x) \leq c$ for all $i \in I$} \}.
\end{eqnarray*}
These filtrations are $G(\mathcal{O}_S)$-invariant (cf.\ \ref{kinvariance}) and $G(\mathcal{O}_S)$-cocompact (cf.\ \cite[2.2.2]{Harder:1969}). There exists $c_3 \in \mathbb{R}$ such that $X^n(c_2) \subseteq X^{\overline{p}}(c_3)$.
\end{definition}

\begin{proposition}[{\cite[Section 5]{Bux/Gramlich/Witzel:2011}}] \label{uniquedirectiontoinfinity}
Let $c \geq c_3$, let $x \in X \backslash X^{\overline{p}}(c)$ and let $\overline{x} \in X^{\overline{p}}(c)$
be an element at which the function $X^{\overline{p}}(c) \to \mathbb{R} : z \mapsto d(x,z)$ assumes a global minimum. Then $P_{\overline{x}} = P_x$. Furthermore, there exists a unique unit speed geodesic ray $\gamma_x^c : [0,\infty) \to X$ with $\gamma_x^c(0) = x$ along which the function $X \backslash X^{\overline{p}}(c) \to \mathbb{R} : x \mapsto d(x,X^{\overline{p}}(c))$ assumes its steepest ascent; its end point lies in the simplex at infinity corresponding to $P_x$.
\end{proposition}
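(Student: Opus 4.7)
The plan is to exploit Proposition \ref{inclusionatinfinity} and the fact that each $\overline{p}_i(B,\cdot)$ is an honest Busemann function on the CAT(0) space $X$, so that on a fixed apartment the filtration $X^{\overline{p}}(c)$ is locally cut out by a finite system of unit-slope affine inequalities. The first observation is that, since $c \geq c_3$, the inclusions $X^n(c_2) \subseteq X^{\overline{p}}(c_3) \subseteq X^{\overline{p}}(c)$ from \ref{filtrationsc3} force $x \not\in X^n(c_2)$. Hence $x$ is close to the boundary in the sense of \ref{close}, and $P_x$ is a proper $K$-parabolic of $G/Y$. Writing $I' = \{ i \in I \col P_i \supseteq P_x \}$, Theorem~\ref{c2} shows that, for every reduced pair $(B,x)$, $I'$ coincides with $\{ i \in I \col n_i(B,x) \geq c_2 \}$ and is independent of the choice of such $B$.

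The existence of the minimiser $\overline{x}$ follows because $X$ is a proper complete CAT(0) space and $X^{\overline{p}}(c)$ is closed, so $d(x, \cdot)$ attains its infimum. To build $\gamma_x^c$, I would fix a reduced pair $(B,x)$ and an apartment $A$ containing $x$ whose chamber at infinity is the one of $B$. In $A$, each $\overline{p}_i(B,\cdot)$ is an affine unit-slope function pointing to the vertex at infinity corresponding to $P_i$, and the face $\Sigma_x$ of the chamber at infinity corresponding to $P_x$ is the face spanned by $\{P_i \col i \in I'\}$. Proposition \ref{inclusionatinfinity} applied to any ray from $x$ into the open Weyl sector of $\Sigma_x$ shows that $(B',y)$ is reduced if and only if $(B',x)$ is, so within this sector the defining inequalities of $X^{\overline{p}}(c)$ reduce to the finitely many affine constraints $\overline{p}_i(B, \cdot) \leq c$ for $i \in I'$, all of which are strictly violated at $x$. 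The direction of steepest ascent of $y \mapsto d(y, X^{\overline{p}}(c))$ in $A$ is the unique unit vector $v \in \Sigma_x$ maximising $\min_{i \in I'} \langle v, \nabla \overline{p}_i(B,\cdot)\rangle$, a strictly convex problem with unique solution $v$ at which the active constraints all grow at a common rate; define $\gamma_x^c$ as the unit-speed ray in $A$ from $x$ with initial direction $v$. Its endpoint lies in the interior of (a face of) $\Sigma_x$ by construction.

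For the remaining claim $P_{\overline{x}} = P_x$, observe that the segment $[\overline{x}, x]$ extends uniquely to a ray in $X$ through $x$, and by the construction above this ray, viewed from $\overline{x}$, is precisely $\gamma_{\overline{x}}^c$; in particular its end at infinity lies in $\Sigma_x$. Since $\overline{x}$ lies on the boundary of $X^{\overline{p}}(c)$, $\overline{x}$ is itself close to the boundary with $P_{\overline{x}} \supseteq P_x$ (the active constraints at $\overline{x}$ are a subset of those at $x$), and Proposition \ref{inclusionatinfinity} applied to the extended ray from $\overline{x}$ forces $P_{\overline{x}} = P_x$.

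The main obstacle is verifying that the candidate ray $\gamma_x^c$ and its asymptotic direction $\xi$ are intrinsic, i.e.\ independent of the choice of the reduced pair $(B,x)$ and the apartment $A$. This requires combining the invariance statements in \ref{inclusionatinfinity} with a combinatorial argument in the spherical building at infinity: different apartments containing $x$ and $\Sigma_x$ differ by an element stabilising $\Sigma_x$, under which the system of Busemann functions $\{\overline{p}_i(B,\cdot)\}_{i \in I'}$ is permuted consistently, so the maximiser $v$ of the min-max problem is invariant. Once this intrinsic nature is established, uniqueness of $\gamma_x^c$ among all rays of steepest ascent follows from strict convexity of the Euclidean distance function to a convex polytope within each apartment, and global uniqueness follows from the uniqueness of geodesic rays with prescribed starting point and endpoint at infinity in CAT(0) spaces.
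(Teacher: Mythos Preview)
The paper does not supply a proof of this proposition; it merely cites \cite[Section~5]{Bux/Gramlich/Witzel:2011} and moves on. So there is no in-paper argument to compare your proposal against.

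As a proof sketch on its own, your outline has the right architecture (work in an apartment containing $x$ and the simplex of $P_x$, use that the $\overline{p}_i$ are affine of unit slope there, and invoke \ref{inclusionatinfinity} to freeze the set of reduced pairs along rays into that simplex), but two steps are genuinely incomplete. First, your reduction ``within this sector the defining inequalities of $X^{\overline{p}}(c)$ reduce to $\overline{p}_i(B,\cdot)\le c$ for $i\in I'$'' conflates the $n$-filtration with the $\overline{p}$-filtration: $I'$ is defined by $n_j(B,x)\ge c_2$, and there is no immediate reason why $\overline{p}_j(B,\cdot)\le c$ should automatically hold for $j\notin I'$ along the sector. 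You need an explicit argument (using the relation between the $n_i$ and the $p_j$ in \ref{piandnu}/\ref{pandn} together with the choice of $c_3$ in \ref{filtrationsc3}) to rule out those constraints. Second, your argument for $P_{\overline{x}}=P_x$ is circular: you invoke ``$\gamma_{\overline{x}}^c$'', which is undefined since $\overline{x}\in X^{\overline{p}}(c)$, and to apply \ref{inclusionatinfinity} from $\overline{x}$ you would need the ray through $x$ to end in the simplex of $P_{\overline{x}}$, which is exactly the conclusion you are after. The asserted inclusion $P_{\overline{x}}\supseteq P_x$ (``fewer active constraints at $\overline{x}$'') again confuses the $\overline{p}$-constraints cutting out $\partial X^{\overline{p}}(c)$ with the $n$-constraints defining the isolated parabolic. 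You also have not checked that $\overline{x}$ is close to the boundary at all, i.e.\ that $P_{\overline{x}}$ is defined; lying on $\partial X^{\overline{p}}(c)$ gives some $\overline{p}_i(B,\overline{x})=c$, not some $n_j(B,\overline{x})\ge c_2$.
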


The preceding proposition shows that one can measure the distance from the 
set $X^{\overline{p}}(c_3)$ in a neat way. It also shows that there is a 
substantial difference between $K$-rank $1$ and higher $K$-rank. Indeed, 
by \ref{uniquedirectiontoinfinity}, in the case of $K$-rank $1$ the boundary 
of $X^{\overline{p}}(c_3)$ consists of hypersurfaces of codimension one 
which must have pairwise empty intersections, as otherwise one would need 
non-minimal isolated $K$-parabolics, which do not exist. The following result 
makes this heuristic argument more concrete. 

\begin{theorem}[{\cite[3.7]{Bux/Wortman}}] \label{horo}
If $\mathrm{rk}_K(G) = 1$, then there exists a collection $\mathcal{H}$ of pairwise disjoint horoballs of $X$ such that $X^{\overline{p}}(c_3) = X \backslash \mathcal{H}$ is $G(\mathcal{O}_S)$-invariant and $G(\mathcal{O}_S)$-cocompact.
\end{theorem}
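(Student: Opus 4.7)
My plan is to exhibit the collection $\mathcal{H}$ explicitly as super-level sets of the Busemann functions $\overline{p}_1(B,\cdot)$, one horoball per minimal $K$-parabolic subgroup $B$ of $G$, and to reduce the pairwise disjointness assertion to Theorem \ref{c2} in the particular situation afforded by $K$-rank one. The hypothesis $\mathrm{rk}_K(G)=1$ means that every minimal $K$-parabolic $B$ of $G$ is already maximal, the index set $I$ in \ref{pandn} has a single element, and $\overline{p}_1(B,\cdot)\colon X \to \mathbb{R}$ is an honest Busemann function whose boundary point $\xi_B$ is the vertex at infinity of $X$ associated with $B$ (cf.~\ref{horoball} and \ref{formula1}). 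By $G(K)$-equivariance of the constructions in \ref{pandn} together with $G(K)$-transitivity on minimal $K$-parabolics, the function $n_1(B,\cdot)$ equals $\mu \cdot \overline{p}_1(B,\cdot)$ for a positive real constant $\mu$ that does not depend on $B$.

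I would then set $H_B := \{x \in X \mid \overline{p}_1(B,x) > c_3\}$ for each minimal $K$-parabolic $B$ of $G$, which is tautologically a horoball centred at $\xi_B$. After possibly enlarging $c_3$ so that $\mu c_3 \geq c_2$ (still satisfying the defining inclusion in \ref{filtrationsc3}), every $x \in H_B$ satisfies $n_1(B,x) \geq c_2 > c_1$, so that $(B,x)$ is automatically a reduced pair and therefore $x \notin X^{\overline{p}}(c_3)$. The reverse inclusion $X \setminus X^{\overline{p}}(c_3) \subseteq \bigcup_B H_B$ is immediate from the definition recalled in \ref{filtrationsc3}, and the asserted $G(\mathcal{O}_S)$-invariance and cocompactness of $X \setminus \mathcal{H} = X^{\overline{p}}(c_3)$ are inherited directly from the corresponding properties of $X^{\overline{p}}(c_3)$ recorded there.

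The heart of the proof is pairwise disjointness, which is where the rank-one hypothesis enters decisively. Suppose $x \in H_B \cap H_{B'}$ for two distinct minimal $K$-parabolics $B$ and $B'$; then both $(B,x)$ and $(B',x)$ are reduced with $n_1(B,x), n_1(B',x) \geq c_2$. Since $B$ is itself the unique maximal $K$-parabolic $P_1(B)$ of type $\alpha_1$ above $B$ in the $K$-rank one setting, Theorem \ref{c2} forces $B' \subseteq P_1(B) = B$, and hence $B' = B$, a contradiction. The main obstacle I anticipate is the bookkeeping around enlarging $c_3$ uniformly so that reducedness of $(B,x)$ is automatic on every $H_B$, together with the verification that the proportionality constant $\mu$ is genuinely $B$-independent; both points reduce to $G(K)$-conjugacy of the minimal $K$-parabolics and the explicit linear relations recorded in \ref{pandn}.
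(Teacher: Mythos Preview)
Your proposal is correct and follows essentially the same route as the paper's proof: both identify the horoballs as super-level sets of the Busemann functions $\overline{p}_1(B,\cdot)$, use the inclusion $X^n(c_2)\subseteq X^{\overline{p}}(c_3)$ from \ref{filtrationsc3} (which in rank one is exactly your condition $\mu c_3\ge c_2$) to pass from $\overline{p}_1> c_3$ to $n_1\ge c_2$ and hence to reducedness, and then apply \ref{c2} to force $B=B'$ whenever two horoballs meet. Your explicit indexing of $\mathcal{H}$ by minimal $K$-parabolics is marginally cleaner than the paper's version, which initially allows nested horoballs with the same centre and then prunes the redundant ones.
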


\begin{proof}
Since the functions $\overline{p}$ are Busemann functions (cf.\ 
\ref{formula1}, \ref{pandn}), there clearly exists a collection 
$\mathcal{H}$ of horoballs of $X$ such that $X^{\overline{p}}(c_3) = X 
\backslash \mathcal{H}$. By \ref{filtrationsc3} the set 
$X^{\overline{p}}(c_3)$ is $G(\mathcal{O}_S)$-invariant and 
$G(\mathcal{O}_S)$-cocompact. It therefore remains to prove that the 
horoballs in $\mathcal{H}$ can be chosen to be either disjoint or equal. 
Let $H_1, H_2 \in \mathcal{H}$ have non-trivial intersection and let $B_1$,
$B_2$ be the minimal $K$-parabolic subgroups corresponding to their
respective centres (cf.\ \ref{horoball}). Then there exists $x \in X$ such 
that $\overline{p}_\alpha(B_1,x), \overline{p}_\alpha(B_2,x) \geq c_3$, 
where $\alpha$ denotes the unique simple root. Therefore, by 
\ref{filtrationsc3}, $n_\alpha(B_1,x), n_\alpha(B_2,x) \geq c_2$, and by 
\ref{c2} one has $B_1 = B_2$. Hence, $H_1 \subseteq H_2$ or $H_2 \subseteq H_1$. 
If $H_1 \neq H_2$, one can remove one of the two from $\mathcal{H}$ without 
changing $X \backslash \mathcal{H}$.
\end{proof}

\section{Applications and conjectures}

\begin{introduction}
In this section I sketch the applicability of the geometric version of
Harder's reduction theory to the study of finiteness properties of 
$S$-arithmetic groups over global function fields and state a very general 
conjecture on isoperimetric properties of $S$-arithmetic groups over
arbitrary global fields. For reduction theory over number fields I strongly recommend \cite{Platonov/Rapinchuk:1994}.
\end{introduction}

\subsection{Finiteness properties of $S$-arithmetic groups}

\begin{definition}
A group $\Gamma$ is of {\bf type $F_m$}, if it admits a free action on a contractible CW complex $X$ with finitely many orbits on the $m$-skeleton of $X$.  
\end{definition}

A group action on a CW complex is called {\bf cellular}, if the action preserves the cell structure, and {\bf rigid}, if each group element that elementwise fixes the skeleton of a cell in fact elementwise fixes the whole cell.

\begin{theorem}[{\cite[1.1]{Brown:1987}}] \label{brown}
Let $m \in \mathbb{N}$, let $X$ be an $(m-1)$-connected CW complex, and let $\Gamma \to \mathrm{Aut}(X)$ act cellularly, rigidly and cocompactly on $X$ such that the stabilizer of each $i$-cell is of type $F_{m-i}$. Then $\Gamma$ is of type $F_m$.
\end{theorem}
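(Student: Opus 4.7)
The plan is to construct a contractible free $\Gamma$-CW complex $Y$ together with a $\Gamma$-equivariant projection $\pi \colon Y \to X$ whose fibre over a point in the interior of a cell $\sigma$ is a contractible CW complex $E_\sigma$, and for which the number of $\Gamma$-orbits of cells in dimensions at most $m$ is finite; by the definition of type $F_m$ recalled just before the theorem, such a $Y$ certifies that $\Gamma$ is $F_m$. The building blocks are chosen as follows: for each cell $\sigma$ of dimension $i$, the hypothesis $\Gamma_\sigma \in F_{m-i}$ supplies a $K(\Gamma_\sigma,1)$ with finite $(m-i)$-skeleton, and $E_\sigma$ is taken to be its universal cover, a contractible CW complex on which $\Gamma_\sigma$ acts freely and cellularly with finitely many orbits of cells in dimensions $\le m-i$. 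Cocompactness of $\Gamma$ on $X$ lets one make only finitely many such choices: the $E_\sigma$ for the remaining cells in a given $\Gamma$-orbit are defined by translation, and rigidity of the $\Gamma$-action on $X$ guarantees that these translates are well defined and glue coherently along face inclusions.

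Then assemble $Y$ by gluing the products $\overline\sigma \times E_\sigma$ along face inclusions $\tau \subseteq \sigma$, using any $\Gamma_\tau$-equivariant cellular map $E_\tau \to E_\sigma$ (such maps exist by the universal property of aspherical spaces applied to $\Gamma_\tau \hookrightarrow \Gamma_\sigma$, and can be arranged compatibly over the finite orbit set). By construction $\Gamma$ acts cellularly and freely on $Y$, since the stabilizer of a product cell $\sigma \times e$ in $Y$ reduces to the $\Gamma_\sigma$-stabilizer of $e \in E_\sigma$, which is trivial. The number of $\Gamma$-orbits of $k$-cells of $Y$ for $k \le m$ is bounded by a finite sum, indexed by orbit representatives $\sigma$ of dimension $i \le k$, of the number of $\Gamma_\sigma$-orbits of $(k-i)$-cells of $E_\sigma$, all finite in the relevant range.

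The main technical obstacle is to show that $Y$ is $(m-1)$-connected. The cleanest route runs through the equivariant homology spectral sequence of the projection $\pi \colon Y \to X$: contractibility of each fibre $E_\sigma$ collapses the local coefficient system $\mathcal{H}_q(E_\bullet)$ to the constant sheaf $\mathbb{Z}$ in degree $q=0$, so that the $E^2$-page reduces to $H_p(X;\mathbb{Z})$; since $X$ is $(m-1)$-connected one deduces $H_n(Y)=0$ for $1 \le n \le m-1$. Simple connectedness of $Y$ is obtained by a parallel van Kampen argument along the skeletal filtration, using simple connectedness of each $E_\sigma$. A more elementary alternative is induction on the $\Gamma$-skeleta of $X$: attaching a $\Gamma$-orbit of $i$-cells to $X$ corresponds in $Y$ to gluing in $\Gamma \times_{\Gamma_\sigma}(E_\sigma \times D^i)$ along $\Gamma \times_{\Gamma_\sigma}(E_\sigma \times S^{i-1})$, and the contractibility of $E_\sigma$ together with $i \le m$ lets one propagate $(m-1)$-connectedness across each such step.

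Once $Y$ is known to be $(m-1)$-connected with finitely many $\Gamma$-orbits on its $m$-skeleton, one inductively attaches free $\Gamma$-orbits of cells in dimensions $\ge m+1$ to kill all higher homotopy groups; the resulting contractible free $\Gamma$-CW complex has the same $m$-skeleton modulo $\Gamma$ as $Y$ and hence still finitely many $\Gamma$-orbits of cells of dimension $\le m$, which is exactly the data required to witness $\Gamma \in F_m$.
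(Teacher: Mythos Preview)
The paper does not prove this theorem at all: it is stated with a citation to \cite[1.1]{Brown:1987} and used as a black box in the subsequent application. So there is no ``paper's own proof'' to compare against; what you have written is a self-contained sketch of Brown's criterion.

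Your outline follows the standard ``replace each cell by a free model of its stabiliser'' strategy and is correct in its broad strokes. The one place where you are glossing over a genuine difficulty is the clause ``can be arranged compatibly over the finite orbit set'' for the structure maps $E_\tau \to E_\sigma$. For a chain of faces $\tau < \sigma < \rho$ you need the composite $E_\tau \to E_\sigma \to E_\rho$ to agree (on the nose, not just up to homotopy) with the chosen map $E_\tau \to E_\rho$, and this coherence does not come for free from the universal property of aspherical spaces. One fixes this either by building the $E_\sigma$ and the maps simultaneously by induction over the poset of cells (using that the targets are contractible to extend partial maps cell by cell), or by passing to the homotopy colimit over the face poset, which has the same homotopy type and the same finiteness in low dimensions. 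Either way the remainder of your argument --- freeness of the $\Gamma$-action on $Y$, the orbit count on the $m$-skeleton, the Leray/van Kampen computation of $\pi_{\le m-1}(Y)$ from the contractible fibres and $(m-1)$-connected base, and the final killing of higher homotopy --- goes through as you describe.
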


\begin{theorem}[{\cite[7.7]{Bux/Wortman}}] \label{horoconnected}
Let $X = X_1 \times \cdots \times X_t$ be an affine building, decomposed into its irreducible factors, let $\partial X$ be the spherical building at infinity of $X$, let $\partial^j X$ be the spherical building at infinity of $\prod_{i=1, i \neq j}^t X_i$, and let $\xi \in \partial X \backslash \bigcup_{1 \leq j \leq t} \partial^j X$. Then any horosphere centred at $\xi$ (\ref{horoball}) is $(\mathrm{dim}(X)-2)$-connected.
\end{theorem}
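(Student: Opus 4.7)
The plan is a three-step argument combining the CAT(0) geometry of $X$, the product structure, and Solomon--Tits sphericity of the spherical building at infinity.

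First, I would reduce the horosphere to the horoball exterior. In a CAT(0) space the Busemann function $b_\xi$ is convex, so the horoball $B := \{b_\xi \geq 0\}$ is convex and hence contractible. The unit-speed geodesic flow $\phi^t$ toward $\xi$ satisfies $b_\xi \circ \phi^t = b_\xi + t$, so the homotopy $F(x,u) := \phi^{-u\,b_\xi(x)}(x)$ deformation retracts the exterior $E := \{b_\xi \leq 0\}$ onto the horosphere $H := \{b_\xi = 0\}$. Hence $H \simeq E$, and it suffices to prove $E$ is $(\dim X - 2)$-connected.

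For the reducible case $t \geq 2$, the hypothesis $\xi \notin \bigcup_j \partial^j X$ forces $b_\xi = \sum_{j=1}^t c_j b_{\xi_j}$ with all $c_j > 0$. Inside an apartment of $X_t$ whose closure contains $\xi_t$ at infinity, pick a bi-infinite geodesic $\rho_t : \mathbb{R} \to X_t$ with $b_{\xi_t}(\rho_t(s)) = -s$ (possible by the opposition structure of the spherical apartment at infinity). Then
\[
\sigma(x_1,\ldots,x_{t-1}) := \bigl(x_1,\ldots,x_{t-1},\rho_t(c_t^{-1}\textstyle\sum_{j<t} c_j b_{\xi_j}(x_j))\bigr)
\]
defines a continuous section $\sigma : X_1 \times \cdots \times X_{t-1} \to E$ of the canonical projection. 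Each fiber of this projection is a horoball in $X_t$, which is convex and contractible, so fiberwise geodesic interpolation toward $\sigma$ yields a deformation retraction $E \to \sigma(X_1 \times \cdots \times X_{t-1}) \cong X_1 \times \cdots \times X_{t-1}$. Since each $X_j$ is contractible, so is $E$ (and hence $H$), which strengthens the claim in the reducible case.

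The substantive case is therefore $t=1$, where $X$ is an irreducible affine building of dimension $n$. Here I would apply Bestvina--Brady Morse theory to $b_\xi$, viewing $X$ as a filtered union of sublevel sets. The attaching data at each new vertex is encoded by its descending link, which in a building embeds as a subcomplex of the spherical building $\partial X$ at infinity determined by the direction toward $\xi$. Solomon--Tits sphericity asserts that $\partial X$ is $(n-2)$-connected of dimension $n-1$. A careful check shows that the relevant descending links inherit this connectivity, and the Bestvina--Brady assembly principle then propagates $(n-2)$-connectivity from the links through the filtration to $E$.

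The chief obstacle is this irreducible base case, specifically the identification of descending links with subcomplexes of $\partial X$ whose connectivity is controlled by Solomon--Tits. A related subtlety is that $\xi$ may lie on walls of $\partial X$, introducing extra parabolic residue structure in the descending links; one must then argue that enough of the Solomon--Tits connectivity survives the additional constraints imposed by the position of $\xi$.
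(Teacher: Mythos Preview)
The survey does not supply its own proof of this theorem; it is quoted with attribution to \cite[7.7]{Bux/Wortman} and then used as a black box in the proof of the next result. There is therefore no in-paper argument to compare against, and one can only assess your proposal on its own merits.

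Your reduction from the horosphere to the complement of the open horoball via the geodesic flow toward $\xi$ is standard and correct, and your treatment of the reducible case $t \geq 2$ is sound and in fact yields contractibility, stronger than required.

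The irreducible case $t=1$ is where your sketch has a genuine gap, and it is a conceptual one rather than merely a missing detail. In combinatorial Morse theory on a Euclidean building, the descending link at a vertex $v$ with respect to $b_\xi$ is a subcomplex of the \emph{local} link $\mathrm{lk}(v)$, which is a finite spherical building of residue type; it is not a subcomplex of the global building $\partial X$ at infinity. Your sentence ``which in a building embeds as a subcomplex of the spherical building $\partial X$ at infinity'' conflates the two objects. The connectivity input one actually needs is a statement about hemisphere-type subcomplexes of the local spherical links (roughly, the simplices whose directions from $v$ make an obtuse angle with the direction toward $\xi$), and this is not a direct consequence of Solomon--Tits for either $\partial X$ or $\mathrm{lk}(v)$. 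It is the substance of Schulz's results on hemisphere complexes, which the Bux--Wortman argument develops and invokes. Your closing paragraph rightly flags this step as the chief obstacle, but the proposed resolution misidentifies both where the descending links live and which sphericity theorem governs them; as written, the irreducible case is not established.
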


\begin{theorem}[{\cite[8.1]{Bux/Wortman}}]
Let $K$ be a global function field, let $G$ be an absolutely almost simple $K$-group of $K$-rank $1$, let $\emptyset \neq S \subset Y^\circ$ be finite, let $X$ be the Euclidean building of $\prod_{P \in S} G(K_P)$, and let $m = \mathrm{dim}(X) = \sum_{P \in S} \mathrm{rk}_{K_P}(G)$. Then $G(\mathcal{O}_S)$ is of type $F_{m-1}$, but not $F_m$. 
\end{theorem}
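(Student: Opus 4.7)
The plan is to apply Brown's criterion (Theorem~\ref{brown}) to the action of $\Gamma := G(\mathcal{O}_S)$ on the $\Gamma$-invariant, $\Gamma$-cocompact subspace $Y := X^{\overline{p}}(c_3) = X \setminus \mathcal{H}$ furnished by Theorem~\ref{horo}. After passing to a sufficiently fine subdivision of $X$, the action of $\Gamma$ on $Y$ is cellular and rigid. The stabilizer in $\Gamma$ of any cell is the intersection of a compact open subgroup of $\prod_{P \in S} G(K_P)$ with the discrete group $\Gamma$, hence finite and in particular of type $F_\infty$. It therefore only remains to verify that $Y$ is $(m-2)$-connected; Theorem~\ref{brown} will then deliver type $F_{m-1}$ at once.

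\textbf{Connectivity of $Y$.} Since $Y = X \setminus \bigsqcup_{H \in \mathcal{H}} H$ with $X$ contractible and each horoball $H$ contractible, a standard van~Kampen/Mayer--Vietoris argument (applied orbit-wise to the disjoint collection $\mathcal{H}$) reduces the $(m-2)$-connectedness of $Y$ to the $(m-2)$-connectedness of each horosphere $\partial H$. Each $H \in \mathcal{H}$ is centred at a $K$-rational point at infinity coming from a minimal $K$-parabolic subgroup $B \subset G$. Because $G$ is absolutely almost simple of $K$-rank~$1$, this end projects non-trivially onto every irreducible factor of the spherical building $\partial X$ and in particular does not lie in any proper subbuilding $\partial^j X$. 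Theorem~\ref{horoconnected} therefore applies and gives the required $(m-2)$-connectedness of $\partial H$, whence of $Y$; Theorem~\ref{brown} yields that $\Gamma$ is of type $F_{m-1}$.

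\textbf{Non-finiteness.} To see that $\Gamma$ is \emph{not} of type $F_m$, the plan is to invoke the contrapositive direction of Brown's criterion applied to the natural $\Gamma$-cocompact exhaustion $X = \bigcup_{c \geq c_3} X^{\overline{p}}(c)$, whose contractibility of the union witnesses that $X$ is an $E\Gamma$: type $F_m$ would force the inclusions $X^{\overline{p}}(c) \hookrightarrow X^{\overline{p}}(c')$ to become essentially $(m-1)$-connected. However, each horosphere $\partial H$ is a closed, top-dimensional, $(m-2)$-connected subspace of its horoball $H$; the long exact sequence of the pair $(H,\partial H)$ (with $H$ contractible) produces a non-trivial class in $H_{m-1}(\partial H)$ that only bounds through the interior of $H$, which is deleted from every $X^{\overline{p}}(c)$. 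As $c$ grows, infinitely many distinct $\Gamma$-orbits of such classes accumulate in $X^{\overline{p}}(c)$ and never bound in any $X^{\overline{p}}(c')$. This obstructs essential $(m-1)$-connectedness in the filtration, so $\Gamma$ is not of type $F_m$.

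The main obstacle is the last step: one must produce a genuinely essential family of $(m-1)$-cycles in the direct limit and verify that no amount of enlarging $c$ kills them $\Gamma$-equivariantly. This requires (i) identifying $H_{m-1}(\partial H)$ with a concrete fundamental class coming from the cocompact action of the unipotent radical of $B$ on $\partial H$, and (ii) using Proposition~\ref{inclusionatinfinity} to ensure that different $\Gamma$-orbits of horoballs really do give inequivalent classes at infinity, so that the obstruction does not collapse under the $\Gamma$-action.
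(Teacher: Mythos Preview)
Your argument for type $F_{m-1}$ follows exactly the paper's line: invoke Theorem~\ref{horo} to obtain the cocompact complement of disjoint horoballs, check via \ref{building} that the horoball centres satisfy the hypothesis of Theorem~\ref{horoconnected}, and feed the resulting $(m-2)$-connectedness into Brown's criterion (Theorem~\ref{brown}). You are in fact more explicit than the paper about the passage from $(m-2)$-connectedness of the individual horospheres to $(m-2)$-connectedness of $X\setminus\mathcal{H}$; the paper simply asserts this implication in one sentence.

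For the failure of $F_m$, note first that the paper's displayed proof does not treat this direction at all; it is absorbed into the citation to Bux--Wortman. Your overall strategy --- the converse of Brown's criterion applied to the cocompact filtration $(X^{\overline{p}}(c))_{c\ge c_3}$ --- is indeed the right framework, and your item (ii) about distinguishing $\Gamma$-orbits is a genuine issue. However, the mechanism you propose in item (i) does not work as stated. A horosphere $\partial H$ in a Euclidean building is \emph{not} a manifold, so there is no fundamental class; the long exact sequence of $(H,\partial H)$ only gives $H_{m-1}(\partial H)\cong H_m(H,\partial H)$, and nothing you have said shows the latter is nonzero. The cocompact action of the unipotent radical does not help here: the quotient $\partial H/R_u(B)$ is a finite complex of dimension $m-1$, but it is very far from a closed manifold, and its top homology is not obviously nontrivial. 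The actual argument in Bux--Wortman constructs explicit non-bounding $(m-1)$-spheres using the apartment structure and retractions onto sectors, together with the Solomon--Tits-type connectivity of links at infinity, and then shows these cycles persist through the filtration. So your plan has the right skeleton, but the step you flag as the main obstacle needs a genuinely different idea than a fundamental-class argument.
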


\begin{proof}
The group $G(\mathcal{O}_S)$ clearly acts rigidly and cellularly on the (contractible) building $X$. By \ref{horo}, the group $G(\mathcal{O}_S)$ acts cocompactly on $X^{\overline{p}}(c_3) = X \backslash \mathcal{H}$ and, in view of \ref{building}, each of the horoballs $H \in \mathcal{H}$ is centred at some $\xi$ which satisfies the hypothesis of \ref{horoconnected}. Therefore, by \ref{horoconnected}, $X^{\overline{p}}(c_3)$ is $(n-2)$-connected. As cell stabilizers in $G(\mathcal{O}_S)$ are finite, whence of type $F_\infty$, the claim follows from \ref{brown}.
\end{proof}

For $K$-rank greater than $1$ this strategy cannot work, as \ref{horo} becomes false. It can be adapted, however, which leads to a couple of technical difficulties that have first been overcome in \cite{Bux/Gramlich/Witzel}, \cite{Witzel} for the $S$-arithmetic groups $G(\mathbb{F}_q[t])$ and $G(\mathbb{F}_q[t,t^{-1}])$ where $G$ is an absolutely almost simple $\mathbb{F}_q$-group of rank $n \geq 1$. In this situation one can in fact make use of the theory of Euclidean twin buildings, as in \cite{Abramenko:1996}.

\begin{theorem}[{\cite[A]{Bux/Gramlich/Witzel}, \cite[Main Theorem]{Witzel}}]
Let $G$ be an absolutely almost simple $\mathbb{F}_q$-group of rank $n \geq 1$. Then $G(\mathbb{F}_q[t])$ is of type $F_{n-1}$, but not $F_n$, and $G(\mathbb{F}_q[t,t^{-1}])$ is of type $F_{2n-1}$, but not $F_{2n}$.
\end{theorem}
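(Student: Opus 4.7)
The plan is to apply Brown's finiteness criterion (\ref{brown}) to the action of $\Gamma := G(\mathcal{O}_S)$ on an appropriate $\Gamma$-cocompact subcomplex of the Euclidean building $X$ from \ref{building}. Here $\mathcal{O}_S$ is either $\mathbb{F}_q[t]$ (so $S=\{\infty\}$ and $X$ is the Bruhat--Tits building of $G(\mathbb{F}_q((t^{-1})))$ of dimension $m=n$) or $\mathbb{F}_q[t,t^{-1}]$ (so $S=\{0,\infty\}$ and $X$ is the product of the two local buildings, of total dimension $m=2n$). Since $G$ is defined over the finite field $\mathbb{F}_q$, cell stabilizers in $\Gamma$ are finite and hence of type $F_\infty$; the action on $X$ is cellular and can be arranged to be rigid by passing to a barycentric subdivision. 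Thus \ref{brown} reduces the assertion of type $F_{m-1}$ to exhibiting an $(m-2)$-connected $\Gamma$-cocompact subspace of $X$, and the natural candidate is the filtration piece $X^{\overline{p}}(c_3)$ of \ref{filtrationsc3}, which is already $\Gamma$-cocompact.

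For the positive direction I would prove that $X^{\overline{p}}(c_3)$ is $(m-2)$-connected by a Morse-theoretic analysis generalising the proof of \ref{horo} beyond $K$-rank one. The geodesic retraction provided by \ref{uniquedirectiontoinfinity} reduces the problem to understanding the homotopy of the cobordisms $X^{\overline{p}}(c)\setminus X^{\overline{p}}(c')$ for $c>c'\geq c_3$. One stratifies $X\setminus X^{\overline{p}}(c_3)$ according to the isolated parabolic $P_x$ attached to a point $x$ (cf.\ \ref{isolatedparabolic}) and proceeds by induction on the rank of $P_x$: the descending link of a stratum indexed by $P_x$ is a join built from the spherical building of a Levi complement of $P_x$, and its connectivity is controlled by the Solomon--Tits theorem. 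The crucial additional input, available precisely because $G$ is defined over $\mathbb{F}_q$, is the Euclidean twin building structure of \cite{Abramenko:1996}: in the $\mathbb{F}_q[t,t^{-1}]$ case the two local factors of $X$ themselves form a twin building, and in the $\mathbb{F}_q[t]$ case the twin partner at $0$ supplies a twin apartment system on which $\Gamma$ still acts. The twin codistance makes the descending links accessible by concrete computation.

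For the negative direction (failure of type $F_m$) I would exhibit a non-trivial class in $H_{m-1}(\Gamma;\mathbb{Z}\Gamma)$, which obstructs the existence of a $K(\Gamma,1)$ with finite $m$-skeleton. The class is built from the top-dimensional reduced homology of the spherical building at infinity of $X$: by the Solomon--Tits theorem the $\mathbb{F}_q$-rational spherical building, of dimension $n-1$ per local factor and hence $m-1$ in total, has non-trivial reduced homology in its top dimension. One of these generating cycles can be realised on the boundary of a horoball-like region that has been removed from $X$ to form $X^{\overline{p}}(c_3)$; pulling it back along the Morse retraction yields a non-trivial $(m-1)$-cycle in $X^{\overline{p}}(c_3)$, and infinitely many $\Gamma$-translates of it remain linearly independent over $\mathbb{Z}\Gamma$.

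The main obstacle is the Morse analysis at strata of $X\setminus X^{\overline{p}}(c_3)$ corresponding to non-minimal isolated parabolics. In the rank-one case these strata do not occur and the complement decomposes as a disjoint union of horoballs by \ref{horo}, whereas in higher rank one must simultaneously track the attaching maps between strata of differing types. Keeping the descending links sufficiently connected while doing so is the technical heart of \cite{Bux/Gramlich/Witzel} and \cite{Witzel}, and it is precisely at this point that the twin building tools of \cite{Abramenko:1996} become indispensable.
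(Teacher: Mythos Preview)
The paper does not give a proof of this theorem; it states the result with attribution to \cite{Bux/Gramlich/Witzel} and \cite{Witzel}, and the surrounding text only remarks that the rank-one strategy of \ref{horo} fails in higher rank, can be adapted ``with technical difficulties'', and that ``in this situation one can in fact make use of the theory of Euclidean twin buildings, as in \cite{Abramenko:1996}''. So there is no detailed argument in the paper to compare against; your sketch is broadly consistent with that one-sentence hint.

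One substantive point is worth flagging. You propose to run the Morse argument on the Harder filtration piece $X^{\overline{p}}(c_3)$ and to invoke \ref{uniquedirectiontoinfinity}. But \ref{uniquedirectiontoinfinity} comes from \cite{Bux/Gramlich/Witzel:2011}, the later Rank Theorem paper, not from \cite{Bux/Gramlich/Witzel} or \cite{Witzel}. The original proofs for $\mathbb{F}_q[t]$ and $\mathbb{F}_q[t,t^{-1}]$ do not pass through Harder's reduction theory at all: the Morse function is built directly from the twin-building codistance, and the $\Gamma$-cocompact subcomplex is a metric/combinatorial neighbourhood of the twin apartment (an opposition-type complex in the spirit of \cite{Abramenko:1996}), not $X^{\overline{p}}(c_3)$. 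Your outline conflates the two approaches. Either would ultimately succeed in these special cases, but the paper singles out the twin-building route precisely because it is self-contained here and predates the general reduction-theoretic machinery; if you want to match the cited sources, the Morse function should be the codistance, and the descending-link analysis is then Abramenko's computation of connectivity of relative links in spherical buildings over $\mathbb{F}_q$.

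A smaller issue: for the negative direction, exhibiting a nonzero class in $H_{m-1}(\Gamma;\mathbb{Z}\Gamma)$ does not by itself obstruct type $F_m$ (groups of type $F_\infty$ can have such classes). What you actually want is Brown's criterion in the other direction: produce $(m-1)$-cycles in the cocompact pieces $X_c$ that remain essential in $X_{c'}$ for all $c' \geq c$, i.e., the system $\widetilde{H}_{m-1}(X_c)$ is not essentially trivial. Your description of where the cycles come from (apartment spheres pushed to the boundary via Solomon--Tits) is the right picture; only the homological bookkeeping needs adjusting.
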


Recently, a combination of the ideas developed in \cite{Bux/Wortman}, \cite{Bux/Gramlich/Witzel}, \cite{Witzel} with Harder's reduction theory allowed the authors of \cite{Bux/Gramlich/Witzel:2011} to prove the following theorem, providing a positive answer to the question asked in \cite[13.20]{Abramenko/Brown:2008}, \cite[p.~80]{Behr:1998}, \cite[p.~197]{Brown:1989}.

\begin{theorem}[{\cite[Rank Theorem]{Bux/Gramlich/Witzel:2011}}] 
\label{rank}
Let $K$ be a global function field, let $G$ be an absolutely almost simple $K$-isotropic $K$-group, let $\emptyset \neq S \subset Y^\circ$ be finite, let $X$ be the Euclidean building of $\prod_{P \in S} G(K_P)$, and let $m = \mathrm{dim}(X) = \sum_{P \in S} \mathrm{rk}_{K_P}(G)$. Then $G(\mathcal{O}_S)$ is of type $F_{m-1}$, but not $F_m$.
\end{theorem}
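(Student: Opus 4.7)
The plan is to apply Brown's criterion \ref{brown} to the natural action of $\Gamma := G(\mathcal{O}_S)$ on the contractible Euclidean building $X$. The action is cellular and rigid, and since $\mathcal{O}_S$-cell stabilizers arise as intersections of compact open subgroups of $\prod_{P\in S}G(K_P)$ with the discrete subgroup $\Gamma$, they are finite and hence of type $F_\infty$. Thus both conclusions reduce to a connectivity analysis of a well-chosen $\Gamma$-invariant, $\Gamma$-cocompact subspace of $X$. The obvious candidate, handed to us by Harder's reduction theory \ref{filtrationsc3}, is $Y := X^{\overline{p}}(c_3)$: it is automatically $\Gamma$-invariant and $\Gamma$-cocompact. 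Type $F_{m-1}$ will follow from $(m-2)$-connectedness of $Y$, and failure of $F_m$ from a $\Gamma$-equivariant non-vanishing in degree $m-1$.

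For the positive direction, I would analyse $X\setminus Y$ by stratifying it according to the isolated parabolic $P_x$ from \ref{isolatedparabolic}. Proposition \ref{uniquedirectiontoinfinity} provides, at each $x\in X\setminus Y$, a canonical geodesic ray $\gamma_x^{c_3}$ of steepest ascent whose endpoint lies in the simplex $\sigma(P_x)\subset\partial X$; flowing backwards along these rays gives a $\Gamma$-equivariant deformation retraction of $X\setminus Y$ onto the union of horospheres $\{b=c_3\}$ associated to the (generalised) Busemann functions $\overline{p}_i(B,\cdot)$ of \ref{pandn} and \ref{formula1}. Because $G$ is absolutely almost simple and $K$-isotropic, every simplex $\sigma(P_x)$ arising this way lies in $\partial X\setminus\bigcup_j\partial^j X$ in the notation of \ref{horoconnected}; thus each of these horospheres is $(m-2)$-connected. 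A homotopy pushout / Mayer--Vietoris argument, combining contractibility of $X$ with the fact that the retract strata are indexed by a poset of proper $K$-parabolics and glue along lower-stratum horospheres of the same "good" type, then propagates $(m-2)$-connectivity to $Y$ itself. Invoking \ref{brown} with $m$ replaced by $m-1$ yields $F_{m-1}$.

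For the negative direction, I would show that $H_{m-1}(Y;\mathbb{Z})$ contains an infinitely generated $\Gamma$-submodule, which by standard obstruction theory rules out $F_m$. The horospheres appearing in the boundary analysis above are $(m-2)$-connected but not $(m-1)$-connected (again by \ref{horoconnected}, whose bound is sharp for generic directions); each therefore contributes a distinguished $(m-1)$-cycle of $Y$. The diagonal embedding of the spherical $K$-building $G(K)/B(K)$ into $\partial X$ from \ref{building} produces infinitely many $\Gamma$-orbits of simplices $\sigma(P_x)$, hence infinitely many $\Gamma$-inequivalent such cycles, and the product formula \ref{productformula} together with \ref{transformationformula} and \ref{transformation2} shows these cycles remain linearly independent modulo the $\Gamma$-action. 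This is the direct higher-rank analogue of the Bux--Wortman obstruction and of the constructions in \cite{Bux/Gramlich/Witzel}, \cite{Witzel}.

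The main obstacle is that, in contrast to the $K$-rank one setting of \ref{horo}, the complement $X\setminus Y$ is no longer a disjoint union of horoballs: the isolated parabolics $P_x$ occurring in \ref{isolatedparabolic} may be non-minimal and of varying Levi type, and the corresponding strata telescope along faces of simplices at infinity rather than staying disjoint. Making the steepest-ascent retraction of \ref{uniquedirectiontoinfinity} sufficiently compatible across strata, so that the Mayer--Vietoris / pushout gluing actually produces the claimed $(m-2)$-connectivity (and, in the other direction, actually detects the horospherical classes), requires the full interplay between \ref{c2}, \ref{isolatedparabolic}, \ref{inclusionatinfinity}, and the twin-building gluings of \cite{Bux/Gramlich/Witzel}. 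Uniform control over the constants $c_1,c_2,\gamma,c_3$ (\ref{choiceofconstants}) across the poset of $K$-parabolics of $G$ and across the places in $S$ is where the real technical weight of \cite{Bux/Gramlich/Witzel:2011} resides.
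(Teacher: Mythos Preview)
The paper does not give its own proof of this theorem; it is stated as the main result of \cite{Bux/Gramlich/Witzel:2011} and merely placed in context. So there is nothing to compare your argument against at the level of detail. That said, two points deserve comment.

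First, your sketch for the negative direction contains a genuine error. You assert that the diagonal embedding of the spherical $K$-building into $\partial X$ produces \emph{infinitely many $\Gamma$-orbits} of simplices $\sigma(P_x)$. This is false: the $K$-parabolic subgroups of $G$ of a given type fall into finitely many $G(\mathcal{O}_S)$-conjugacy classes, essentially by reduction theory (finiteness of the double coset space in \ref{finitecosetspace} and \ref{fundamentaldomain}). The obstruction to $F_m$ therefore cannot come from an infinite supply of $\Gamma$-inequivalent horosphere classes indexed by parabolics. The actual argument, which the paper attributes to \cite[1.1]{Bux/Wortman:2007} in the remark following the theorem, is of a different nature: one exhibits an $(m-1)$-sphere in $X$ that cannot be filled within any bounded neighbourhood of a $\Gamma$-orbit, using that apartments are flats of dimension $m$ and that the $K$-rational building at infinity is non-empty. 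No count of $\Gamma$-orbits of parabolics enters.

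Second, your sketch for the positive direction correctly identifies the ingredients (the cocompact filtration $X^{\overline p}(c)$ of \ref{filtrationsc3}, the steepest-ascent flow of \ref{uniquedirectiontoinfinity}, the horosphere connectivity of \ref{horoconnected}, and Brown's criterion \ref{brown}), and you are honest that the gluing across strata indexed by non-minimal isolated parabolics is where the difficulty lies. This is accurate as a description of the landscape, but the passage ``a homotopy pushout / Mayer--Vietoris argument \ldots\ propagates $(m-2)$-connectivity to $Y$'' is exactly the step that \cite{Bux/Gramlich/Witzel:2011} spends most of its length on, and it is not a routine Mayer--Vietoris: the strata do not form an open cover with controlled intersections, and the actual argument proceeds via combinatorial Morse theory on a height function together with a delicate analysis of descending links. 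Your proposal is a fair high-level outline, but it should not be read as a proof.
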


\begin{remark}
Using the notation introduced above, already \cite[1.1]{Bux/Wortman:2007} established that $G(\mathcal{O}_S)$ is not of type $F_{m}$.
\end{remark}

\begin{remark}
$S$-arithmetic groups over number fields are known to be of type $F_\infty$, cf.\ \cite[\S 11]{Borel/Serre:1976}. 
\end{remark}

\subsection{Isoperimetric properties of $S$-arithmetic groups}

In the number field case, results similar to \ref{reduction5} and \ref{reduction6} hold, cf.\ \cite{Borel/Harish-Chandra:1962}, \cite[p.~53]{Harder:1969}. 
To the best of my knowledge, the question posed by Harder (\cite[p.~54]{Harder:1969}) whether it is possible to prove the results from \cite{Borel/Harish-Chandra:1962} using methods similar (or at least closer) to the approach used by Harder is still open. The key, of course, would be to prove \ref{reduction2} for number fields.

As this problem by itself probably will not attract sufficient attention, I will finish this survey by stating a very general conjecture on properties of $S$-arithmetic groups over arbitrary global fields, i.e., global function fields or number
fields, which, if verified, provides another proof of \ref{rank}.

\begin{definition}
A {\bf coarse $n$-manifold} $\Sigma$ in a metric space $X$ is a function from the vertices of a triangulated $n$-manifold $M$ into $X$. The homeomorphism type of the manifold $M$ is called the {\bf topological type} of $\Sigma$. The {\bf boundary} $\partial \Sigma$ is the restriction to $\partial M$ of the function $\Sigma$. The coarse manifold $\Sigma$ has {\bf scale} $r \in \mathbb{R}_+$, if $d(\Sigma(x),\Sigma(y)) \leq r$ for all adjacent vertices $x$, $y$ of $M$. The {\bf volume} $\mathrm{vol}(\Sigma)$ equals the number of vertices in $M$.
\end{definition}

\begin{conjecture}[{\cite{Bux/Wortman:2007}}]    
Let $K$ be a global field, i.e., global function field or a number field, let $G$ be an absolutely almost simple $K$-isotropic $K$-group, let $S$ be a non-empty finite set of places of $K$ containing all archimedean ones, and let $n < \sum_{P \in S} \mathrm{rk}_{K_P}(G)$. 
To any $r_1 > 0$ there exists a linear polynomial $f$ and $r_2 > 0$ such that, if $\Sigma \subseteq \prod_{P \in S} G(K_P)$ is a coarse $n$-manifold of scale $r_1$ with $\partial\Sigma \subseteq G(\mathcal{O}_S)$, then there exists a coarse $n$-manifold $\Sigma' \subseteq G(\mathcal{O}_S)$ of scale $r_2$ and identical topological type as $\Sigma$ such that $\partial\Sigma' = \partial\Sigma$ and $\mathrm{vol}(\Sigma') \leq f(\mathrm{Vol}(\Sigma))$. 
\end{conjecture}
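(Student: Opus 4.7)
The plan is to convert the filling problem for coarse manifolds in $\prod_{P \in S} G(K_P)$ into a filling problem in the Euclidean building $X = \prod_{P \in S} X_P$, on whose Harder reduction core $X^{\overline{p}}(c_3)$ the $S$-arithmetic group $G(\mathcal{O}_S)$ acts cocompactly (cf.\ \ref{filtrationsc3} and, for number fields, the as-yet-unavailable analogue of \ref{reduction6}). Given a coarse $n$-manifold $\Sigma$ of scale $r_1$ in $\prod_{P \in S} G(K_P)$ with $\partial\Sigma \subseteq G(\mathcal{O}_S)$, one produces a coarse $n$-manifold $\tilde\Sigma$ in $X$ of comparable scale by applying $\Sigma$ to a basepoint in $X^{\overline{p}}(c_3)$; after a bounded perturbation the boundary $\partial \tilde\Sigma$ lies in $X^{\overline{p}}(c_3)$. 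If $\tilde\Sigma$ can be deformed into $X^{\overline{p}}(c_3)$, keeping $\partial\tilde\Sigma$ fixed, without increasing its volume by more than a linear factor, cocompactness of the $G(\mathcal{O}_S)$-action on $X^{\overline{p}}(c_3)$ then provides the required coarse manifold $\Sigma'$ in $G(\mathcal{O}_S)$.

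The geometric heart is the deformation step, for which I would use the steepest-ascent data of \ref{uniquedirectiontoinfinity}: each point $x$ outside $X^{\overline{p}}(c_3)$ carries a canonical unit-speed geodesic ray $\gamma_x^{c_3}$ pointing into the boundary simplex of $P_x$, and running this flow backwards defines a retraction $\rho : X \to X^{\overline{p}}(c_3)$. Inside each stratum indexed by a fixed isolated parabolic $P$, the flow is essentially a contraction along the $P$-direction, so that retracting a coarse $n$-simplex produces a coarse $n$-manifold whose volume is controlled by the distance travelled, weighted by how transversally the simplex sits to the direction of flow. The dimension assumption $n < \dim X = \sum_{P \in S} \mathrm{rk}_{K_P}(G)$ is indispensable here: in each apartment the retraction collapses along a direction not contained in a generic tangent $n$-plane, so a Federer--Fleming style estimate (compare \cite{Bux/Wortman} and the higher-rank discretisations of \cite{Bux/Gramlich/Witzel:2011}) should yield a linear bound on volume expansion; in the top dimension $n = \dim X$ no such bound can exist by \ref{rank}.

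Two obstacles look decisive. The first is the unresolved number field analogue of Theorem \ref{reduction2}: without it, the filtration $X^{\overline{p}}(c)$ and its cocompactness properties are not on equal footing for archimedean places, and even stating the retraction $\rho$ rigorously requires redoing parts of \cite{Harder:1968} in the arithmetic setting. The second is assembling the stratum-wise volume estimates into a globally linear bound when the $K$-rank exceeds one: here the isolated parabolics are not all minimal, the strata on which $\rho$ is smooth are glued along the combinatorics governed by \ref{c2}, and one has to show that the transitions contribute only a bounded multiplicative overhead. I expect the latter to be the main technical obstacle on the function field side, since it demands a volume-sensitive refinement of the Morse-theoretic tools used in \cite{Bux/Gramlich/Witzel:2011} to establish \ref{rank}.
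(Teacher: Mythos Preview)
The statement you are attempting to prove is a \emph{conjecture}, and the paper does not supply a proof of it. It is introduced explicitly as ``a very general conjecture on properties of $S$-arithmetic groups over arbitrary global fields \ldots\ which, if verified, provides another proof of \ref{rank}.'' There is therefore no proof in the paper against which to compare your proposal. The only partial result the paper records is Theorem~9.10 (Bestvina--Eskin--Wortman), which establishes the existence of a polynomial bound, of unspecified degree, under the stronger hypothesis $n < |S|$.

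Your proposal is not a proof either, and you say so yourself: it is a research outline that correctly identifies the natural strategy (pass to the building, retract onto the Harder core $X^{\overline{p}}(c_3)$ using the steepest-ascent flow of \ref{uniquedirectiontoinfinity}, then invoke cocompactness) and correctly isolates the two genuine obstructions. The first obstruction --- the missing number-field analogue of \ref{reduction2} --- is precisely the open problem the paper flags just before stating the conjecture. The second --- obtaining a \emph{linear} volume bound for the retraction across the stratification by isolated parabolics --- is the substantive content of the conjecture and is not known even in the function-field case; the methods of \cite{Bux/Gramlich/Witzel:2011} give connectivity, not quantitative filling. So your outline is a reasonable plan of attack, but it does not close the gap, and no argument in the paper does either.
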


In case $n < |S|$, the existence of a polynomial $f$ of {\em unspecified degree} and $r_2 > 0$ as in the conjecture have been established by Bestvina, Eskin and Wortman. The precise statement of their result is as follows.

\begin{theorem}[{\cite[2]{Wortman}}]
Let $K$ be a global field, i.e., global function field or a number field, let $G$ be an absolutely almost simple $K$-isotropic $K$-group, let $S$ be a non-empty finite set of places of $K$ containing all archimedean ones, and let $n < |S|$. To any $r_1 > 0$ there exists a polynomial $f$ and $r_2 > 0$ such that, if $\Sigma \subseteq \prod_{P \in S} G(K_P)$ is a coarse $n$-manifold of scale $r_1$ with $\partial\Sigma \subseteq G(\mathcal{O}_S)$, then there exists a coarse $n$-manifold $\Sigma' \subseteq G(\mathcal{O}_S)$ of scale $r_2$ and identical topological type as $\Sigma$ such that $\partial\Sigma' = \partial\Sigma$ and $\mathrm{vol}(\Sigma') \leq f(\mathrm{Vol}(\Sigma))$.
\end{theorem}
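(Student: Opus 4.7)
The plan is to reduce the statement to a statement about coarse manifolds in the product $X = \prod_{P \in S} X_P$ of Euclidean buildings, and then to construct a $\Gamma$-equivariant polynomial deformation retraction onto the cocompact subcomplex $X^{\overline{p}}(c_3)$ from \ref{filtrationsc3}. Throughout, write $\Gamma := G(\mathcal{O}_S)$; after fixing a base vertex $x_0 \in X$, the orbit map $\Gamma \to \Gamma \cdot x_0 \subseteq X$ is a quasi-isometry, so up to changing $r_1$ and the polynomial $f$ it suffices to produce, for every coarse $n$-manifold $\Sigma \subseteq X$ of scale $r_1$ whose boundary lies in a bounded neighbourhood of $\Gamma \cdot x_0$, a coarse $n$-manifold $\Sigma' \subseteq X^{\overline{p}}(c_3)$ of the same topological type, the same boundary, and polynomial volume. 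By \ref{filtrationsc3}, $X^{\overline{p}}(c_3)$ is $\Gamma$-cocompact, and a standard Svarc--Milnor/cellular filling argument then lifts $\Sigma'$ back into $\Gamma$.

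Next, I would build the retraction $\rho : X \to X^{\overline{p}}(c_3)$ using \ref{uniquedirectiontoinfinity}. For $x \notin X^{\overline{p}}(c_3)$ define $\rho(x)$ to be the foot $\overline{x}$ on $X^{\overline{p}}(c_3)$ and let $\gamma_x^{c_3}$ be the unique geodesic of steepest descent; by \ref{inclusionatinfinity} the isolated parabolic $P_x = P_{\overline{x}}$ and the set of reducing minimal parabolics are constant along this ray. This gives a canonical stratification of $X \setminus X^{\overline{p}}(c_3)$ by the isolated $K$-parabolic $P$, together with a ``depth'' coordinate $t(x) := d(x,X^{\overline{p}}(c_3))$. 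The stratum with $P_x = P$ fibres over its foot in $\partial X^{\overline{p}}(c_3)$ with fibre modelled on the unipotent orbit of $R_u(P)(\prod_{P \in S} K_P)$ crossed with the flow parameter; on this fibre $\rho$ contracts horospherically along the dominant direction, with Jacobian bounded by a linear character of $P$ evaluated along the flow, i.e., by $q^{O(t(x))}$.

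The decisive input is the assumption $n < |S|$. Each place $P \in S$ supplies an independent factor $X_P$ together with its own family of Busemann functions from \ref{formula1}; correspondingly, at each point of $\partial X^{\overline{p}}(c_3)$ there are at least $|S|$ transverse directions along which the retraction contracts. I would first perturb $\Sigma$ by a bounded amount (at scale $r_1$) into ``general position'' with respect to this stratification so that, in each stratum indexed by $P$, the intersection of $\Sigma$ with the fibre of $\rho$ has codimension at least $|S|-n \geq 1$ inside the $|S|$-parameter family of horospherical directions. This transversality keeps $\rho(\Sigma)$ from being swept more than once across any fibre. Combining with the $q^{O(t)}$ Jacobian estimate and the fact that $t$ is bounded by the diameter of $\Sigma$, which in turn is polynomial in $\mathrm{vol}(\Sigma)$, one obtains the required polynomial bound on $\mathrm{vol}(\Sigma')$ after suitably triangulating $\rho(\Sigma)$ at the enlarged scale $r_2$ dictated by the contraction constants of $\rho$ on balls of radius $r_1$.

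The hard part is step three: the general-position perturbation and the accompanying Jacobian bookkeeping in higher $K$-rank. In the $K$-rank one, single-place case (\ref{horo}) the cones are ordinary horoballs and the argument is essentially trivial, but in the setting of the theorem the strata of $X \setminus X^{\overline{p}}(c_3)$ are nested according to the partial order on $K$-parabolics, and $\gamma_x^{c_3}$ can change direction when $x$ crosses from one stratum into another. Quantifying the number of such crossings and controlling the volume additively over each stratum is where the polynomial $f$ of unspecified degree (rather than the linear $f$ conjectured) emerges, and is the reason the present theorem is weaker than the full conjecture. Everything else is essentially a careful accounting of the constants provided by Harder's reduction theory, specifically by the transformation formulae of \ref{transformation} and \ref{formula2} applied one place at a time.
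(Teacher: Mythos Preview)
The paper does not prove this theorem: it is quoted from \cite{Wortman} without argument, so there is nothing in the paper to compare your sketch against.

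As for the sketch itself, there are two genuine gaps. First, the statement is for arbitrary global fields, whereas every tool you invoke --- the product of Euclidean buildings $X$, the filtration $X^{\overline{p}}(c_3)$ of \ref{filtrationsc3}, the retraction of \ref{uniquedirectiontoinfinity}, and the formulae \ref{transformation} and \ref{formula2} --- is set up in this survey only over function fields; at archimedean places the factors are symmetric spaces and none of these references apply as written.

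Second, and more seriously, your volume estimate does not close. You say the horospherical Jacobian along the retraction is $q^{O(t)}$ with $t=d(x,X^{\overline{p}}(c_3))$ bounded by the diameter of $\Sigma$, hence by $r_1\,\mathrm{vol}(\Sigma)$; but $q^{O(t)}$ is then \emph{exponential} in $\mathrm{vol}(\Sigma)$, not polynomial. The transversality clause does not rescue this: the retraction of \ref{uniquedirectiontoinfinity} flows each point along a \emph{single} geodesic direction determined by the isolated $K$-parabolic $P_x$, and the hypothesis $n<|S|$ plays no role in that construction --- the type of $P_x$ ranges over a set of size $\mathrm{rk}_K(G)$, not $|S|$. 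The argument in the cited reference does not route through Harder's filtration at all; it exploits the $|S|$-fold product structure of $\prod_{P\in S}G(K_P)$ directly, and that is where $n<|S|$ actually enters.
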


Note that this result provides an alternative proof that the $S$-arithmetic 
group 
$G(\mathcal{O}_S)$
in \ref{rank} is of type $F_{|S|-1}$.

\begin{footnotesize}

\bibliographystyle{ralf}
\bibliography{habil}

\begin{thebibliography}{BGW11}

\bibitem[AB08]{Abramenko/Brown:2008}
Peter Abramenko, Kenneth~S.\ Brown.
\newblock {\em Buildings -- Theory and Applications\/}, vol. 248 of {\em
  Graduate Texts in Mathematics\/}.
\newblock Springer, Berlin.
\newblock 2008.

\bibitem[Abr96]{Abramenko:1996}
Peter Abramenko.
\newblock {\em Twin buildings and applications to {$S$}-arithmetic groups\/},
  vol. 1641 of {\em Lecture Notes in Mathematics\/}.
\newblock Springer, Berlin.
\newblock 1996.

\bibitem[Arm67]{Armitage:1967}
J.~Vernon Armitage.
\newblock Algebraic functions and an analogue of the geometry of numbers: the
  Riemann--Roch theorem.
\newblock {\em Arch.\ Math.\/}  (1967), pp.~383--393.

\bibitem[AW45]{Artin/Whaples:1945}
Emil Artin, George Whaples.
\newblock Axiomatic characterization of fields by the product formula for
  valuations.
\newblock {\em Bull.\ Amer.\ Math.\ Soc.\/}  (1945), pp.~469--492.

\bibitem[Beh98]{Behr:1998}
Helmut Behr.
\newblock Arithmetic groups over function fields I.
\newblock {\em J.\ reine angew.\ Math.\/} {\bf 495} (1998), pp.~79--118.

\bibitem[Beh04]{Behr:2004}
Helmut Behr.
\newblock Higher finiteness properties of $S$-arithmetic groups in the function
  field case I.
\newblock In {\em Groups: topological, combinatorial and arithmetic aspects\/}
  (edited by Thomas~W.\ M\"uller), pp. 27--42. Cambridge University Press,
  Cambridge.
\newblock 2004.

\bibitem[BGW09]{Bux/Gramlich/Witzel}
Kai-Uwe Bux, Ralf Gramlich, Stefan Witzel.
\newblock Finiteness properties of {C}hevalley groups over a polynomial ring
  over a finite field.
\newblock {\tt arXiv:0908.4531}.
\newblock 2009.

\bibitem[BGW11]{Bux/Gramlich/Witzel:2011}
Kai-Uwe Bux, Ralf Gramlich, Stefan Witzel.
\newblock Higher finiteness properties of reductive arithmetic groups in
  positive characteristic: the rank theorem.
\newblock {\tt arXiv:????.????}
\newblock 2011.

\bibitem[BH62]{Borel/Harish-Chandra:1962}
Armand Borel, {Harish-Chandra}.
\newblock Arithmetic subgroups of algebraic groups.
\newblock {\em Ann.\ Math.\/} {\bf 75} (1962), pp.~485--535.

\bibitem[BH99]{Bridson/Haefliger:1999}
Martin Bridson, Andr{\'e} Haefliger.
\newblock {\em Metric spaces of non-positive curvature\/}.
\newblock Springer, Berlin.
\newblock 1999.

\bibitem[Bor91]{Borel:1991}
Armand Borel.
\newblock {\em {Linear Algebraic Groups}\/}.
\newblock Springer, Berlin.
\newblock 1991.

\bibitem[Bro87]{Brown:1987}
Kenneth~S.\ Brown.
\newblock Finiteness properties of groups.
\newblock {\em J.\ Pure Appl.\ Algebra\/} {\bf 44} (1987), pp.~45--75.

\bibitem[Bro89]{Brown:1989}
Kenneth~S.\ Brown.
\newblock {\em Buildings\/}.
\newblock Springer, Berlin.
\newblock 1989.

\bibitem[BS76]{Borel/Serre:1976}
Armand Borel, Jean-Pierre Serre.
\newblock Cohomologie d'immeubles et de groupes $S$-arithm{\'e}tiques.
\newblock {\em Topology\/} {\bf 15} (1976), pp.~211--232.

\bibitem[BW07]{Bux/Wortman:2007}
Kai-Uwe Bux, Kevin Wortman.
\newblock Finiteness properties of arithmetic groups over function fields.
\newblock {\em Invent.\ math.\/} {\bf 167} (2007), pp.~335--378.

\bibitem[BW08]{Bux/Wortman}
Kai-Uwe Bux, Kevin Wortman.
\newblock Connectivity properties of horospheres in {E}uclidean buildings and
  applications to finiteness properties of discrete groups.
\newblock {\tt arXiv:0808.2087v1}.
\newblock 2008.

\bibitem[Che51]{Chevalley:1951}
Claude Chevalley.
\newblock {\em Introduction to the theory of algebraic functions of one
  variable\/}.
\newblock American Mathematical Society, New York.
\newblock 1951.

\bibitem[DG70a]{Demazure/Grothendieck:1970a}
Michel Demazure, Alexandre Grothendieck (editors).
\newblock {\em S{\'e}minaire de G{\'e}om{\'e}trie Alg{\'e}brique du Bois Marie
  -- 1962--64 -- Sch{\'e}mas en groupes, Tome 1\/}.
\newblock Springer, Berlin.
\newblock 1970.

\bibitem[DG70b]{Demazure/Grothendieck:1970c}
Michel Demazure, Alexandre Grothendieck (editors).
\newblock {\em S{\'e}minaire de G{\'e}om{\'e}trie Alg{\'e}brique du Bois Marie
  -- 1962--64 -- Sch{\'e}mas en groupes, Tome 3\/}.
\newblock Springer, Berlin.
\newblock 1970.

\bibitem[Eis95]{Eisenbud:1995}
David Eisenbud.
\newblock {\em Commutative algebra\/}.
\newblock Springer, Berlin.
\newblock 1995.

\bibitem[Gro57]{Grothendieck:1957}
Alexandre Grothendieck.
\newblock Sur la classification des fibr{\'e}s sur la sph{\`e}re de Riemann.
\newblock {\em Amer.\ J.\ Math.\/} {\bf 79} (1957), pp.~121--138.

\bibitem[Gro60]{Grothendieck:1960}
Alexandre Grothendieck.
\newblock \'{E}l\'ements de g\'eom\'etrie alg\'ebrique. {I}. {L}e langage des
  sch\'emas.
\newblock {\em Inst. Hautes \'Etudes Sci. Publ. Math.\/} {\bf 4} (1960),
  pp.~5--228.

\bibitem[Har68]{Harder:1968}
G{\"u}nter Harder.
\newblock Halbeinfache {G}ruppenschemata \"uber vollst\"andigen {K}urven.
\newblock {\em Invent.\ math.\/} {\bf 6} (1968), pp.~107--149.

\bibitem[Har69]{Harder:1969}
G{\"u}nter Harder.
\newblock Minkowskische Reduktionstheorie \"uber Funktionenk\"orpern.
\newblock {\em Invent.\ math.\/} {\bf 7} (1969), pp.~33--54.

\bibitem[Har74]{Harder:1974}
G{\"u}nter Harder.
\newblock Chevalley groups over function fields and automorphic forms.
\newblock {\em Ann.\ of Math.\/} {\bf 100} (1974), pp.~249--306.

\bibitem[Har77a]{Harder:1977}
G{\"u}nter Harder.
\newblock Die {K}ohomologie {$S$-arithmetischer} {G}ruppen \"uber
  {F}unktionenk\"orpern.
\newblock {\em Invent.\ math.\/} {\bf 42} (1977), pp.~135--175.

\bibitem[Har77b]{Hartshorne:1977}
Robin Hartshorne.
\newblock {\em Algebraic geometry\/}.
\newblock Springer, Berlin.
\newblock 1977.

\bibitem[Har08]{Harder:2008}
G{\"u}nter Harder.
\newblock {\em Lectures on Algebraic Geometry I\/}.
\newblock Vieweg, Wiesbaden.
\newblock 2008.

\bibitem[Hum75]{Humphreys:1975}
James~E.\ Humphreys.
\newblock {\em {Linear Algebraic Groups}\/}.
\newblock Springer.
\newblock 1975.

\bibitem[Jan03]{Jantzen:2003}
Jens~Carsten Jantzen.
\newblock {\em Representations of algebraic groups\/}.
\newblock American Mathematical Society.
\newblock 2003.

\bibitem[Lau97]{Laumon:1997}
G{\'e}rard Laumon.
\newblock {\em Cohomology of Drinfeld modular varieties. Part II: automorphis
  forms, trace formulas and Langlands correspondence\/}.
\newblock Cambridge University Press, Cambridge.
\newblock 1997.

\bibitem[Liu02]{Liu:2002}
Qing Liu.
\newblock {\em Algebraic Geometry and Arithmetic Curves\/}.
\newblock Oxford University Press, Oxford.
\newblock 2002.

\bibitem[NX09]{Niederreiter/Xing:2009}
Harald Niederreiter, Chaoping Xing.
\newblock {\em Algebraic geometry in coding theory and cryptography\/}.
\newblock Princeton University Press, Princeton.
\newblock 2009.

\bibitem[PR94]{Platonov/Rapinchuk:1994}
Vladimir Platonov, Andrei Rapinchuk.
\newblock {\em Algebraic groups and number theory\/}, vol. 139 of {\em Pure and
  Applied Mathematics\/}.
\newblock Academic Press Inc., Boston, MA.
\newblock 1994.

\bibitem[Ros02]{Rosen:2002}
Michael Rosen.
\newblock {\em Number theory in function fields\/}.
\newblock Springer, Berlin.
\newblock 2002.

\bibitem[Ser55]{Serre:1955}
Jean~Pierre Serre.
\newblock Faisceaux alg{\'e}briques coherents.
\newblock {\em Ann.\ Math.\/} {\bf 61} (1955), pp.~197--278.

\bibitem[Ser88]{Serre:1988}
Jean~Pierre Serre.
\newblock {\em Algebraic groups and class fields\/}.
\newblock Springer, Berlin.
\newblock 1988.

\bibitem[Ser03]{Serre:2003}
Jean~Pierre Serre.
\newblock {\em Trees\/}.
\newblock Springer, Berlin.
\newblock Corrected second printing.
\newblock 2003.

\bibitem[Spr98]{Springer:1998}
Tonny~A.\ Springer.
\newblock {\em {Linear Algebraic Groups}\/}.
\newblock Birkh{\"a}user, Basel.
\newblock 1998.

\bibitem[Stu80]{Stuhler:1980}
Ulrich Stuhler.
\newblock Homological properties of certain arithmetic groups in the function
  field case.
\newblock {\em Invent. Math.\/} {\bf 57} (1980), pp.~263--281.

\bibitem[Tam66]{Tamagawa:1966}
Tsuneo Tamagawa.
\newblock Ad{\`e}les.
\newblock In {\em Algebraic groups and discontinuous subgroups\/}, pp.
  113--121. American Mathematical Society, Providence.
\newblock 1966.

\bibitem[Vos98]{Voskresenski:1998}
Valentin Voskresenski{\u\i}.
\newblock {\em Algebraic groups and their birational invariants\/}, vol. 179 of
  {\em Translations of Mathematical Monographs\/}.
\newblock American Mathematical Society, Providence, RI.
\newblock 1998.

\bibitem[Wat79]{Waterhouse:1979}
William~C.\ Waterhouse.
\newblock {\em Introduction to affine group schemes\/}.
\newblock Springer, Berlin.
\newblock 1979.

\bibitem[Wei82]{Weil:1982}
Andr{\'e} Weil.
\newblock {\em Ad{\`e}les and algebraic groups\/}.
\newblock Birkh{\"a}user, Basel.
\newblock 1982.

\bibitem[Wei95]{Weil:1995}
Andr{\'e} Weil.
\newblock {\em Basic number theory\/}.
\newblock Springer, Berlin.
\newblock 1995.

\bibitem[Wei03]{Weiss:2003}
Richard Weiss.
\newblock {\em The structure of spherical buildings\/}.
\newblock Princeton University Press, Princeton.
\newblock 2003.

\bibitem[Wei09]{Weiss:2009}
Richard Weiss.
\newblock {\em The structure of affine buildings\/}.
\newblock Princeton University Press, Princeton.
\newblock 2009.

\bibitem[Wit10]{Witzel}
Stefan Witzel.
\newblock Finiteness properties of {C}hevalley groups over a Laurent polynomial
  ring over a finite field.
\newblock {\tt arXiv:1007.5216}.
\newblock 2010.

\bibitem[Wor10]{Wortman}
Kevin Wortman.
\newblock Isoperimetric and finiteness properties of arithmetic groups.
\newblock {\em Oberwolfach Reports}, to appear, {\tt
  http://www.mfo.de/programme/schedule/2010/17/OWR\_2010\_20.pdf}.
\newblock 2010.

\bibitem[ZS75]{Zariski/Samuel:1975b}
Oscar Zariski, Pierre Samuel.
\newblock {\em Commutative algebra, volume 2\/}.
\newblock Springer, Berlin.
\newblock 1975.

\end{thebibliography}

\end{footnotesize}

\vspace{2cm}

\noindent
Ralf Gramlich \\
Fachbereich Mathematik \\
TU Darmstadt \\
Schlo\ss gartenstra\ss e 7 \\
64289 Darmstadt \\
Germany \\
e-mail: {\tt gramlich@mathematik.tu-darmstadt.de} 

\medskip \noindent
Justus-Liebig-Universit\"at Gie\ss en \\
Mathematisches Institut \\
Arndtstra\ss e 2 \\
35392 Gie\ss en \\
Germany

\end{document}